\numberwithin{equation}{section}
\newtheorem{theorem}{Theorem}[section]
\newtheorem{lemma}[theorem]{Lemma}
\newtheorem{proposition}[theorem]{Proposition}
\newtheorem{corollary}[theorem]{Corollary}
\theoremstyle{definition}
\newtheorem{definition}[theorem]{Definition}
\newtheorem{example}[theorem]{Example}
\newtheorem{remark}[theorem]{Remark}
\newtheorem*{acknowledgments}{Acknowledgments}
\newtheorem*{notation}{Notations and conventions}
\theoremstyle{remark}
\newcommand\mynote[1]{\marginpar{\ \\ \small \tt #1}}
\newcommand\bel[1]{{\mynote{#1}}\begin{equation}\label{#1}}
\newcommand\mylabel[1]{\label{#1}}
\newcommand{\ZZ}{\mathbb{Z}}
\newcommand{\QQ}{\mathbb{Q}}
\newcommand{\CC}{\mathbb{C}}
\newcommand{\FF}{\mathbb{F}}
\newcommand{\GG}{\mathbb{G}}
\newcommand  {\shA}     {\mathcal{A}}
\newcommand  {\shE}     {\mathcal{E}}
\newcommand  {\shF}     {\mathcal{F}}
\newcommand  {\shG}     {\mathcal{G}}
\newcommand  {\shM}     {\mathcal{M}}
\newcommand  {\shX}     {\mathcal{X}}
\newcommand  {\shO}     {\mathcal{O}}
\newcommand  {\Aut}     {\operatorname{Aut}}
\newcommand  {\Br}      {\operatorname{Br}}
\newcommand  {\et}      {{\text{\rm \'{e}t}}}
\newcommand  {\Ext}     {\operatorname{Ext}}
\newcommand  {\fet}      {{\text{\rm f\'{e}t}}}
\newcommand  {\Gal}     {\operatorname{Gal}}
\newcommand  {\GL}      {\operatorname{GL}}
\newcommand  {\Gr}      {\operatorname{Gr}}
\newcommand  {\Hom}     {\operatorname{Hom}}
\newcommand  {\lra}     {\longrightarrow}
\newcommand  {\Pic}     {\operatorname{Pic}}
\newcommand  {\PGL}     {\operatorname{PGL}}
\newcommand  {\PSL}     {\operatorname{PSL}}
\newcommand  {\ra}      {\rightarrow}
\newcommand  {\Ra}      {\Rightarrow}
\newcommand  {\Spec}    {\operatorname{Spec}}
\newcommand  {\Tor}     {\operatorname{Tor}}
\renewcommand  {\top}     {\operatorname{top}}
\newcommand {\zar}      {{\operatorname{zar}}}
\def\mydate{\number\day\space\ifcase\month \or January\or February\or March\or
April\or May\or June\or July\or
August\or September\or October\or November\or December\fi \space\number\year}
\begin{document}

\title[ Brauer groups and \'etale homotopy type]
      { Brauer groups and \'etale homotopy type}

\author[Mohammed Moutand]{Mohammed Moutand}
\address{ Moulay Ismail University, 
 Department of mathematics, 
Faculty of sciences,  Mekn\`es,  B.P. 11201 Zitoune,  Mekn\`es,  Morocco.}
\curraddr{}
\email{m.moutand@edu.umi.ac.ma}



\begin{abstract}
Extending a result of Schr\"oer  on a Grothendieck question in the context of   complex analytic spaces,  we prove that the surjectivity of the Brauer map $\delta: \Br(X) \ra  H_{\et}^2(X, \GG_{m, X})_{\rm tor}$ for algebraic schemes depends on their \'etale homotopy type. We use properties of algebraic $K(\pi, 1)$ spaces to apply this to some  classes of proper and smooth algebraic schemes. In particular we recover a  result of Hoobler and Berkovich for abelian varieties. Further, we  give an additional condition for the surjectivity of $\delta$ which involves pro-universal covers. All proposed conditions turn out to be equivalent for smooth quasi-projective varieties. 
\end{abstract}

\maketitle
\tableofcontents

\section*{Introduction}
\mylabel{introduction}

In \cite{GR1} Grothendieck established a general formalism for the theory of Azumaya algebras,  which allows to construct the Brauer group $\Br(X)$  of a scheme (or more generally of a locally ringed topos),  and hence generalizing the previous construction of Azumaya for local rings  and that of Auslander-Goldman for   arbitrary  commutative rings. He defined $\Br(X)$ as the set of classes of  Azumaya algebras on $X$ modulo Morita equivalence,  or equivalently the set of equivalence  classes of principal $\PGL_n$-bundles. In a part of his works, he constructed  via non abelian cohomology  an injective homomorphism of groups $\delta: \Br(X) \ra  \Br'(X)$ called the Brauer map (see Theorem \ref{groth}),  where $\Br'(X):= H_{\et}^2(X, \GG_{m, X})_{\rm tor}$  is  the torsion part of   the cohomological  Brauer group $ H_{\et}^2(X, \GG_{m, X})$,  and asked in which case this map is a bijection,  in other words, for a given scheme $X$,  does any  cohomological   Brauer class $\beta \in \Br'(X) $ comes from an Azumaya algebra ?. When  $X$ is a complex analytic space endowed with the sheaf of holomorphic functions,  one can define by a similar construction the analytic Brauer group $\Br(X)$ of $X$,  and hence we get in terms of cohomology of sheaves a well defined injective Brauer map $\delta: \Br(X) \ra  \Br'(X):=H^2(X, \GG_{m, X})_{\rm tor}$ (cf. \cite{HS}, \cite{SCH3}).

A positive answer to this question for any class of schemes will be with a big interest when it comes to  the computation of $\Br(X)$, this is due to the fact that the cohomological Brauer group $\Br'(X)$ appears  in many fundamental exact sequences with various other cohomology groups (eg. Kummer sequence, Artin-Schreier sequence, exponential exact sequence,...). The question is also partially related to the problem of determining weather an algebraic stack is a quotient stack (see \cite{EHKV}).

The Brauer map is known to be surjective for the following classes of algebraic schemes:

\begin{itemize}
\item  Regular schemes of dimension $\leq 2$: Grothendieck \cite{GR1}.
\item Abelian varieties: Berkovich \cite{BERK},   and more generally abelian schemes: Hoobler \cite{HOO1}.
\item Character free algebraic groups: Iversen \cite{IVR}.
\item Affine schemes,  and  separated unions of two affine schemes: Gabber \cite{GBBR}. Simplified proofs were given by Hoobler \cite{HOO2} with more additional results.
\item Schemes with ample invertible sheaf: Proved by Gabber (unpublished). An alternative  proof was given  by De Jong \cite{DJNG}.
\item Separated geometrically normal algebraic surfaces: Schr\"oer   \cite{SCH1}.
\end{itemize}

For  complex spaces, we have the following treated cases:

\begin{itemize}
\item Complex torus: Elencwajg and Narasimhan  \cite{EN}.
\item Analytic K3 surfaces, Ricci-flat compact K\"{a}hler surfaces:  Huybrechts and \ Schr\"oer  \cite{HS}.

\item  Hopf manifolds,  complex lie groups and elliptic surfaces: Schr\"oer \cite{SCH3}. These are  particular cases of a general statement (see Theorem \ref{schr}) for complex analytic spaces proved by the author in $loc. cit.$ via  homotopy theory.
\end{itemize}

The equality  $ \Br(X) = \Br'(X)$ does not hold  in general. Indeed, an example of a non separated normal surface for which  $\Br(X) \neq  \Br'(X)$ was constructed in \cite{EHKV} by arguments from quotient stacks theory. The same example was treated by Bertuccioni \cite{BERTU} by means of Mayer-Vietoris sequence with a  K-theory approach.

More general variants of this problem have been established by several  authors. In  \cite{MTH1} Mathur showed   via the resolution propriety that $ \Br(\shX) = \Br'(\shX)$ when $\shX$  is a tame Artin stack of dimension $\leq 2$,  and more recently \cite{MTH2} he treated the case of algebraic spaces obtained from quasi-projective schemes by contracting a curve.   Bertolin and Galluzzi  \cite{BERT}  extended the notion of Azumaya algebras to ( non necessary algebraic ) stacks,  and as an application they gave an affirmative answer to Grothendieck  question for 1-motives $ M = [u: X \ra  G]$  defined  over noetherian schemes.  A derived variant of this question was  studied by   Toen \cite{TOE}  via the notion of derived Azumaya algebras. Extending this construction,  Antieau and Gepner \cite{ANT}  treated the problem in the context of spectral geometry. And more recently  Chough \cite{CHO}  proved similar results for algebraic stacks in  both derived and spectral contexts.

For a given  cohomological Brauer class $\beta \in H_{\et}^2(X, \GG_{m, X})_{\rm tor}$,  it is  difficult to find explicitly an Azumaya algebra on $X$ whose image under $\delta$  is $\beta$. However, many tools have been introduced to ensure the existence of the required  algebra; In \cite{LIE} Lieblich proved that for a nice scheme for which \'etale cohomology can be computed in \v Cech terms,  the class $\beta$ lies in $\Br(X)$ if only if there exists a finite locally free $\alpha$-twisted \'etale sheaf on $X$  of positive rank (see also \cite{DJNG}). Using this fact, he recovered Grothendieck and Gabber results by simplifications of Hoobler arguments in \cite{HOO2}. Another important  tool  is  the geometric interpretation  of  the cohomology groups   $H_{\et}^1(X,   \PGL_n(\mathcal{O}_X))$  and  $H_{\et}^2(X, \mathbb{G}_{m, X})$ via $\PGL_n$-torsors and $\GG_{m, X}$-gerbes. More precisely,  one can associate to any Azumaya algebra $\shA$ a  $\GG_{m,X}$-gerbe       $\shG_{\shA}$  and  a         $\PGL_n$-torsor  $P_{\shA}$ such that  the class  $[\shG_{\shA}] \in H_{\et}^2(X, \mathbb{G}_{m,X})$  is equal to the image of the class $[P_{\shA}] \in H_{\et}^1(X,   \PGL_n(\mathcal{O}_X) $  under the  boundary   map $\delta_n:  H_{\et}^1(X,   \PGL_n(\mathcal{O}_X))   \ra  H_{\et}^2(X, \mathbb{G}_{m, X})$ (see \cite[Chapter 12]{OLSS}). In the light of  this interpretation,  authors in \cite{EHKV} showed that  the class $\beta$ lies in $\Br(X)$ if only if the $\GG_{m, X}$-gerbe $\mathcal{X}_{\beta}$ associated to ${\beta}$  is a quotient stack. A useful   technical tool   used by Gabber,  Hoobler and Berkovich   which we shall adopt in this paper  states that  if there exists a finite \'etale cover (or a Galois cover) $\pi: Y \ra X$   trivializing the class $\beta$ in $H_{\et}^2(Y, \GG_{m, Y})$ then $\Br(X) = \Br'(X)$ (see Lemmas  ~\ref{etale} and   ~\ref{galois}). When $X$ is a  complex analytic space,   \ Schr\"oer   \cite{SCH3} proved that such a cover can be obtained, and hence one has $\Br(X) = \Br'(X)$,  if the topological fundamental group $\pi_1(X)$ is a good group, 
and  the  subgroup of $\pi_1(X)$-invariants inside
the Pontryagin dual $\Hom(\pi_2(X), \QQ/\ZZ)$
is trivial.  

 The aim of this paper is to extend \ Schr\"oer result to the algebraic setting. In this context, for a pointed connected noetherian scheme $(X,\bar x)$, we are going to work with  the Grothendieck   \'etale fundamental group $\pi_1^\et(X,  \bar x) $ introduced in \cite{SGA1}, and   the higher \'etale  homotopy groups $\pi_n^\et(X,  \bar x)$   ($n\geq 0$)  as defined by Artin and Masur in \cite{AM}. For our purpose, since $\pi_1^\et(X,  \bar x) $ is always profinite,  we  just have to deal with  the higher groups $\pi_n^\et(X,  \bar x)$   ($n\geq 2$). We adabt an algebraic version of \ Schr\"oer argument to  prove -by means of  Galois-Grothendieck theory- the following  main result:

\begin{theorem}{\rm (Theorem \ref {rcnb})}
Let $X$ be a regular  connected  scheme of finite type over a field k of characteristic 0, with a geometric base point $\bar x \ra X$,   such that $ \pi_2^\et(X, \bar{x})=0$. Then $\Br(X) = \Br'(X)$.
\end{theorem}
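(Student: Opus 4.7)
The plan is to reduce, via Lemma~\ref{galois}, to producing a finite \'etale Galois cover on which an arbitrary torsion class in $\Br'(X)$ is trivialized. Fix $\beta \in \Br'(X)$ of some order $n$; since $\operatorname{char} k = 0$ the integer $n$ is invertible on $X$, so the Kummer sequence supplies a lift $\beta' \in H_{\et}^2(X,\mu_n)$ of $\beta$. It suffices to exhibit a finite \'etale Galois cover $\pi \colon Y \ra X$ with $\pi^*\beta' = 0$ in $H_{\et}^2(Y,\mu_n)$: the image of $\pi^*\beta'$ in $H_{\et}^2(Y,\GG_{m,Y})$ is then $\pi^*\beta$, so $\pi^*\beta = 0$, and Lemma~\ref{galois} yields $\beta \in \Br(X)$.

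To produce such a cover, introduce the pro-universal cover $\widetilde{X} := \varprojlim_Y Y$, the inverse limit over pointed finite \'etale Galois covers $(Y,\bar y) \ra (X,\bar x)$. The transition maps are affine, so continuity of \'etale cohomology yields
\[
H_{\et}^2(\widetilde{X},\mu_n) \;=\; \varinjlim_Y H_{\et}^2(Y,\mu_n).
\]
Thus finding a finite \'etale Galois cover killing $\beta'$ amounts to proving $H_{\et}^2(\widetilde{X},\mu_n) = 0$. On $\widetilde{X}$ the locally constant sheaf $\mu_n$ becomes the constant sheaf $\ZZ/n$, since $\widetilde{X}$ admits no nontrivial connected finite \'etale covers and the Galois action therefore becomes trivial.

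To establish the vanishing, pass to the \'etale homotopy type in the sense of Artin--Mazur. The pro-space $\widetilde{X}_\et$ is pro-simply connected by construction; and since finite \'etale covers of connected schemes induce isomorphisms on \'etale homotopy groups in degrees $\geq 2$, taking the limit gives $\pi_2(\widetilde{X}_\et) = \pi_2^\et(\widetilde{X}) = \pi_2^\et(X,\bar x) = 0$ by hypothesis. The Hurewicz theorem for (pro-)simply connected pro-spaces then yields $H_1(\widetilde{X}_\et) = H_2(\widetilde{X}_\et) = 0$, whence $H^2(\widetilde{X}_\et, \ZZ/n) = 0$ by the universal coefficient formula. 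The Artin--Mazur comparison theorem, applicable since $X$ is regular of finite type over a field of characteristic zero and $\ZZ/n$ is a finite constant coefficient system, finally yields
\[
H_{\et}^2(\widetilde{X}, \ZZ/n) \;=\; H^2(\widetilde{X}_\et, \ZZ/n) \;=\; 0,
\]
as required.

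The principal technical obstacle is to carry out the Hurewicz argument and the Artin--Mazur comparison on the pro-object $\widetilde{X}_\et$ rather than on an honest space: one must ensure that the relevant \'etale homotopy groups are profinite, that the \'etale cohomology of the limit scheme $\widetilde{X}$ is indeed computed by the colimit of the cohomologies along the tower, and that no $\varprojlim{}^1$ obstructions disturb the comparison. These points are where regularity, finite type, and characteristic zero are decisively used. Once they are granted, the remainder of the argument is a routine combination of Kummer theory, the continuity of \'etale cohomology along the pro-universal cover, and the Galois-descent Lemma~\ref{galois}.
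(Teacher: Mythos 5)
Your proposal is correct in substance, but it follows a genuinely different route from the paper's proof of Theorem \ref{rcnb} --- one that the paper itself develops later, in Section \ref{S8}. The paper's argument stays on the base: it runs the Cartan--Leray spectral sequence for the universal cover $Et(X)^\thicksim \ra Et(X)$ of the \'etale homotopy type, uses the classical Hurewicz theorem and universal coefficients on $Et(X)^\thicksim$ to obtain $H_{\et}^2(X,\shF)\simeq H^2(\pi_1^\et(X,\bar x),\shF_{\bar x})$, and then extracts the required Galois cover from the continuity of profinite group cohomology together with the Galois correspondence (the inflation class dies upon restriction to the open normal subgroup $N$, and $\pi_2^\et(Y,\bar y)=0$ by Lemma \ref{homotop}). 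You instead pass to the pro-universal cover $\hat X=\varprojlim X_i$, use continuity of \'etale cohomology, and kill $H_{\et}^2(\hat X,\mu_n)$ outright; this is essentially the composite of the implication (i)$\Rightarrow$(ii) of Proposition \ref{carpi} with Proposition \ref{probr}, and your target vanishing statement is indeed true under the hypothesis $\pi_2^\et(X,\bar x)=0$. What your route costs is a stronger homotopy-theoretic input: the constituents $Et(X_i)$ of your pro-system are \emph{not} individually simply connected (only the pro-system $\{Et(X_i)\}$ is pro-simply connected, because every open subgroup of $\pi_1^\et(X,\bar x)$ contains a smaller open normal one), so you need the Hurewicz theorem in the pro-category rather than the classical Hurewicz theorem applied levelwise to honest universal covers, which is all the paper uses; you should also either invoke the naturality of the universal coefficient exact sequence across the pro-system or argue as the paper does in Proposition \ref{probr} via the Hochschild--Serre spectral sequence, which avoids pro-Hurewicz entirely by reducing to $H_{\et}^2(\hat X,\hat f^{*}\shF)=0$ for each cover separately (Remark \ref{kpdep}). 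Finally, your closing attribution is off: regularity, finite type and characteristic zero are not what make the pro-category bookkeeping work --- regularity is needed so that Lemma \ref{galois} (Berkovich's criterion) applies, characteristic zero ensures $\mu_{n,X}$ is a locally constant constructible torsion sheaf for every $n$, and finite type gives noetherianness for the Artin--Mazur formalism on each $X_i$.
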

As in the topological context,  the calculation of  the higher  \'etale homotopy groups $ \pi_n^\et(X, \bar{x})$   is in general much more difficult. However,  if $X$ is in particular a geometrically unibranch scheme  with $ \pi_n^\et(X, \bar{x})=0$ for all $n \geq 2$, then this is equivalent to say that   $X$  is an algebraic $K(\pi,1)$ space (Definition \ref{defkp}). This class of spaces was largely studied by  Achinger  \cite{ACH1},\cite{ACH2}   in addition to   other variants (logarithmic \cite{ACH1}, rigid analytic and mixed characteristic \cite{ACH2}). Some properties of this class sketched  in $loc. cit.
$ will serve to get the following partial result concerning schemes over $\CC$.
\begin{corollary}
Let  $X$ be  a   smooth connected scheme of finite type  over $\CC$,  if X is an Artin neighborhood over $\Spec(\CC)$, then   $\Br(X)= \Br'(X)$.
\end{corollary}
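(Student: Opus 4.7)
The plan is to reduce the corollary to a direct application of Theorem \ref{rcnb}. Since $X$ is automatically regular, connected, of finite type over $\CC$, which has characteristic $0$, the only substantive point to verify is $\pi_2^{\et}(X,\bar x)=0$. In fact, I will show that all higher étale homotopy groups of $X$ vanish.

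First, I recall the structure of an Artin neighborhood: by definition $X$ admits a tower of elementary fibrations
$$ X = X_n \lra X_{n-1} \lra \cdots \lra X_1 \lra X_0 = \Spec(\CC), $$
in which each $X_i \ra X_{i-1}$ is a smooth relative curve obtained from a smooth projective compactification by removing a finite étale divisor. In particular the geometric fibers are smooth affine curves over algebraically closed fields of characteristic $0$, whose étale homotopy type is known to be concentrated in degrees $\le 1$; they are thus algebraic $K(\pi,1)$ spaces in the sense of Definition \ref{defkp}.

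Second, I invoke the fact, recalled from Achinger \cite{ACH1}, \cite{ACH2}, that the class of algebraic $K(\pi,1)$ spaces is stable under elementary fibrations: the Leray-type long exact sequence for the étale homotopy groups of the fibration, together with the vanishing on base and fibers, forces the vanishing of $\pi_n^{\et}$ of the total space for $n\ge 2$. Iterating up the tower shows that $X$ is itself an algebraic $K(\pi,1)$. As $X$ is smooth over $\CC$ it is geometrically unibranch, so the equivalence stated just before the corollary yields $\pi_n^{\et}(X,\bar x)=0$ for every $n\geq 2$, and in particular $\pi_2^{\et}(X,\bar x)=0$. Theorem \ref{rcnb} then applies and delivers $\Br(X)=\Br'(X)$.

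The only non-routine ingredient is the $K(\pi,1)$ property of iterated elementary fibrations; however, over a field of characteristic $0$ this is classical (going back to Artin's good-neighborhood theorem in \cite{SGA1}-style formulations and formalized in the references cited above), so the proof is essentially an assembly of existing material and the presentation mainly needs to make the homotopical stability argument and the appeal to Theorem \ref{rcnb} explicit.
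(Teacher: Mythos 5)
Your argument is correct and coincides with the paper's own route: the paper proves the corollary by first establishing that an Artin neighborhood over $\Spec(\CC)$ is an algebraic $K(\pi,1)$ (Proposition \ref{artkp}), whose third proof is exactly your induction up the tower of elementary fibrations via the long exact sequence of \'etale homotopy groups and the $K(\pi,1)$ property of the smooth affine curve fibers, and then concludes via Proposition \ref{kphom} and Theorem \ref{rcnb}. The only cosmetic difference is that the paper cites Friedlander's fibration exact sequence rather than Achinger for the homotopical stability step.
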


When it comes to schemes over subfields of $\CC$, or algebraically closed fields in the proper case,  the \'etale fundamental group behaves nicely with base change (see \cite{ES}). Using   descent arguments, this can  be employed  together with   properties of algebraic $K(\pi,1)$ spaces to prove the following result for  proper and smooth schemes.

\begin{theorem} Let  $X$ be a  geometrically connected  scheme of finite type over a field k. Suppose that $k$ can be embedded as a subfield of $\CC$
,  and such that  $X_\CC$ is an Artin neighborhood over $\Spec(\CC)$. Then $\Br(X)= \Br'(X)$ in the following cases
\begin{itemize}
 \item [(i)] {\rm (Proposition \ref {prpr})} $X$  proper,  with $k$ algebraically closed   and the natural morphism $\Br(X) \lra  \Br(X_{\CC})$ is surjective. 
 \item [(ii)] {\rm (Proposition \ref {smooth})} $X$  smooth, with $k$  finitely generated  over $\QQ$.    
 \item [(iii)] {\rm (Proposition \ref {prprreg})}  $X$ regular,  proper,  with $k$ algebraically closed. 
\end{itemize}
\end{theorem}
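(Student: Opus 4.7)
The plan is to reduce each of the three cases to the vanishing of $\pi_2^\et(X,\bar x)$ and then invoke Theorem~\ref{rcnb}, with the exception of case~(i), which admits a more direct argument via the Corollary. The common input is that $X_\CC$ being an Artin neighborhood is an algebraic $K(\pi,1)$ in the sense of Definition~\ref{defkp}, so $\pi_n^\et(X_\CC,\bar x)=0$ for every $n\ge 2$; in particular the Corollary already supplies $\Br(X_\CC)=\Br'(X_\CC)$, and the remaining work is to propagate this information back to $X$.

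For case~(iii), where $X$ is regular and proper over $k$ algebraically closed with $k\hookrightarrow\CC$, I would invoke the proper base change for the \'etale homotopy type (Artin--Mazur, permitted precisely by properness over the algebraically closed field $k$) to obtain an isomorphism $\pi_n^\et(X,\bar x)\iso\pi_n^\et(X_\CC,\bar x)$ for every $n\ge 1$. Hence $\pi_2^\et(X,\bar x)=0$ and Theorem~\ref{rcnb} applies, giving $\Br(X)=\Br'(X)$.

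For case~(ii), where $X$ is smooth and geometrically connected over $k$ finitely generated over $\QQ$, I would first exploit the Artin--Mazur homotopy fiber sequence $X_{\bar k,\et}\ra X_\et\ra (\Spec k)_\et$. Since $(\Spec k)_\et$ is a $K(\Gal(\bar k/k),1)$, the associated long exact sequence yields an isomorphism $\pi_n^\et(X,\bar x)\iso\pi_n^\et(X_{\bar k},\bar x)$ for $n\ge 2$. The invariance of higher \'etale homotopy groups under extension of algebraically closed fields of characteristic zero for smooth varieties (the comparison used through \cite{ES}) then gives $\pi_2^\et(X_{\bar k},\bar x)\iso\pi_2^\et(X_\CC,\bar x)=0$, and Theorem~\ref{rcnb} concludes.

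For case~(i), since $X$ is only assumed proper and regularity is not hypothesized directly, one bypasses Theorem~\ref{rcnb} and works from the Corollary. Given $\beta\in\Br'(X)$, its image $\beta_\CC\in\Br'(X_\CC)=\Br(X_\CC)$ is represented by an Azumaya algebra on $X_\CC$, and the surjectivity hypothesis lifts this Azumaya class to some $\alpha\in\Br(X)$. The remaining step is to show that the pullback $\Br'(X)\to\Br'(X_\CC)$ is injective on torsion: this follows from proper base change for \'etale cohomology with $\mu_n$-coefficients (valid because $k$ is algebraically closed in $\CC$ and $X$ is proper), combined with the Kummer sequence. Thus $\delta(\alpha)$ and $\beta$ coincide after base change to $\CC$, forcing $\beta=\delta(\alpha)\in\Br(X)$. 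The principal obstacles across all three cases are the base-change statements---for $\pi_2^\et$ in (ii) and (iii), and for $\Br'$ in~(i); once these standard but delicate inputs (proper base change for \'etale homotopy, the Artin--Mazur fiber sequence together with~\cite{ES}, and proper base change for $\mu_n$-cohomology) are granted, the three reductions proceed formally.
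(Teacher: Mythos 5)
Your treatments of (i) and (iii) are essentially sound and close to what the paper does. For (i) you run the same diagram chase as Proposition \ref{prpr}; the injectivity of $\Br'(X)\ra\Br'(X_\CC)$ that you need is exactly what the paper packages as Proposition \ref{propbr} (an isomorphism, obtained from proper base change for $H_{\et}^2(-,\mu_n)$ together with the invariance of $\pi_1^\et$ and hence of the relevant piece of $\Pic$). For (iii) you compare the completed \'etale homotopy types of $X$ and $X_\CC$ directly via Artin--Mazur; the paper's proof of Proposition \ref{prprreg} instead descends the algebraic $K(\pi,1)$ property cohomologically (proper base change plus $\pi_1^\et(X)\simeq\pi_1^\et(X_\CC)$ from \cite{ES}) and then applies Proposition \ref{kphom} and Theorem \ref{rcnb}, but it uses your route elsewhere (Proposition \ref{alg0}, Theorem \ref{abel}), so either path is acceptable for a regular proper scheme.

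Case (ii) is where your argument has a genuine gap. First, you invoke a long exact homotopy sequence for $X_{\bar k}\ra X\ra\Spec(k)$; but the statement that $Et(X_{\bar k})$ is the homotopy fibre of $Et(X)\ra Et(\Spec(k))$ is not a formal fact --- Friedlander's exact sequence applies to geometric (elementary) fibrations, not to an arbitrary structure morphism --- and the paper deliberately avoids it by using instead the Leray spectral sequence $H_{\et}^p(k,H_{\et}^q(X_{\bar k},\rho^*\shF))\Rightarrow H_{\et}^{p+q}(X,\shF)$, which exists unconditionally. Second, you deduce $\pi_2^\et(X_{\bar k})\simeq\pi_2^\et(X_\CC)$ from ``the comparison used through \cite{ES}'', but \cite{ES} only provides invariance of $\pi_1^\et$; invariance of the \emph{higher} \'etale homotopy groups for a smooth, possibly non-proper variety under $\bar k\hookrightarrow\CC$ is precisely the nontrivial content here and cannot be cited to that source. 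The paper's actual route is: smooth base change for cohomology plus the $\pi_1$-comparison show that $X_{\bar k}$ inherits the algebraic $K(\pi,1)$ property from $X_\CC$; then the homotopy exact sequence of \emph{fundamental} groups (which does exist) and the Leray spectral sequence show that every class in $H_{\et}^2(X,\shF)$ dies on a finite \'etale cover, whence $\pi_2^\et(X,\bar x)=0$ by Remark \ref{kpdep} and Theorem \ref{rcnb} applies. If you wish to keep your homotopy-theoretic phrasing you must supply proofs or precise references for both the fibration sequence and the higher-homotopy base change; as written these are assertions, not arguments.
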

The choice of $X_\CC$ to be  an Artin neighborhood follows from the fact that  $X_\CC(\CC)$ is a topological $K(\pi,  1)$ space with good topological fundamental group $\pi_1(X_\CC(\CC))$ (Lemma \ref {khtp}), which means that $X_\CC(\CC)$ verifies \ Schr\"oer conditions. We review these notions in Section \S \ref{S2}.

Under some assumptions, our results can be extended to a scheme $X$ defined over an algebraically closed field $k$ of characteristic zero (Proposition \ref{alg0}), or more generally over a  noetherian scheme $S$ (Proposition \ref{overS}).

By a theorem of Artin \cite[Exp XI,  Proposition  3.3]{SGA4}  any smooth scheme over an algebraically closed field  $k$ of  characteristic 0  can be covered by Artin neighborhoods. This was generalized  by Achinger \cite{ACH2},  by proving that any smooth scheme over a  field  of positive characteristic admits a cover by $K(\pi, 1)$ open subschemes. This gives  us the possibility  to make legitimately  assumptions on  one piece   of such a  cover. Therefore,  by using  purity theorems for \'etale cohomology  and  local to global comparison techniques,  we get our third main result:
\begin{theorem}{\rm (Theorem \ref {localbr})}   
Let  $X$ be a  smooth variety over an algebraically closed field $k$ of characteristic $p \geq 0$,   such that any pair of point $(x, y) \in X $  is contained in an affine open scheme. Suppose that there exists  an algebraic $K(\pi, 1)$ open subscheme    $   Y \subset X$   such that for every $z \in Z:= X - Y$,  the local ring $\shO_{X, z}$ has dimension $\geq 2$. Then $\Br(X)= \Br'(X)$ up to a $p$-component.
\end{theorem}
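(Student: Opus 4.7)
The overall strategy is to trivialize $\beta$ by a Galois cover of $X$, built by extending a trivializing cover of $Y$ across $Z = X \setminus Y$, and then to apply the descent lemma~\ref{galois}. Fix a prime $\ell \ne p$ and a class $\beta \in \Br'(X)\{\ell\}$; since the statement only claims surjectivity up to the $p$-component, it suffices to treat each such $\ell$-primary component separately.

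Restricting to $Y$, the algebraic $K(\pi,1)$ hypothesis gives $\pi_n^\et(Y, \bar y) = 0$ for all $n \ge 2$. The prime-to-$p$ analogue of Theorem~\ref{rcnb} -- whose proof uses only coefficients in $\mu_{\ell^k}$ and transfers from the characteristic-zero case -- then yields $\Br(Y)\{\ell\} = \Br'(Y)\{\ell\}$. Thus $\beta_Y := \beta|_Y$ is represented by an Azumaya algebra $\mathcal{A}_Y$, and its associated $\PGL_n$-torsor produces a finite étale Galois cover $\tilde Y \to Y$ on which $\beta_Y$ becomes trivial. Because $X$ is smooth and $\dim \mathcal{O}_{X,z} \ge 2$ for every $z \in Z$, Zariski--Nagata purity gives an equivalence between the categories of finite étale covers of $X$ and of $Y$, so $\tilde Y \to Y$ extends uniquely to a finite étale Galois cover $\pi : \tilde X \to X$. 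The scheme $\tilde X$ is again smooth, $\tilde X \setminus \tilde Y$ still has codimension $\ge 2$, and Gabber's cohomological purity for $\mathbb{G}_m$ on smooth schemes provides an injection $\Br'(\tilde X)\{\ell\} \hookrightarrow \Br'(\tilde Y)\{\ell\}$; since $(\pi^*\beta)|_{\tilde Y} = 0$, we conclude $\pi^*\beta = 0$ in $\Br'(\tilde X)$.

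With $\pi$ a Galois cover of $X$ that trivializes $\beta$, Lemma~\ref{galois} yields $\beta \in \Br(X)$. The hypothesis that any two points of $X$ lie in a common affine open is preserved by the finite map $\pi$ (given $\tilde x, \tilde y \in \tilde X$, choose an affine open $U \subset X$ containing $\pi(\tilde x)$ and $\pi(\tilde y)$; then $\pi^{-1}(U)$ is an affine open of $\tilde X$ containing both $\tilde x$ and $\tilde y$), and is precisely what is needed to run the descent in Lemma~\ref{galois}. The main obstacle is to confirm that Theorem~\ref{rcnb} admits the expected prime-to-$p$ reformulation in positive characteristic, and that both purity statements (for $\pi_1^\et$ and for $\Br'$) apply correctly in our smooth setting; once that is granted, the argument is a clean combination of extension and Galois descent.
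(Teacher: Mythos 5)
Your overall strategy is sound and uses the same three inputs as the paper --- Zariski--Nagata purity for $\pi_1^\et$, the \v Cesnavi\v cius purity theorem (Theorem \ref{purity}) for $\Br'$, and the $K(\pi,1)$ property of $Y$ --- but you arrange them in the opposite order. The paper first transports the degree-$2$ $K(\pi,1)$ property from $Y$ down to $X$: it compares the Kummer sequences of $X$ and $Y$, uses Theorem \ref{purity} for the $\Br'$ column and the $\pi_1$-isomorphism (plus Lemma \ref{h1}.(a)) for the $\Pic$ column to get $H^2_\et(X,\mu_n)\simeq H^2_\et(Y,\mu_n)$, deduces $H^2(\pi_1^\et(X,\bar y),(\mu_{n,X})_{\bar y})\simeq H^2_\et(X,\mu_{n,X})$, and then produces a finite \'etale cover of $X$ killing $\beta$ directly, finishing with Lemma \ref{etale} (this is where the affine-pair hypothesis is used). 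You instead kill $\beta|_Y$ on a cover of $Y$, extend the cover across $Z$ by Zariski--Nagata, and apply purity \emph{upstairs} on $\tilde X$ to see $\pi^*\beta=0$, finishing with Lemma \ref{galois}. Your route is legitimate and in fact slightly more economical: Lemma \ref{galois} only needs $X$ regular, so you never need the affine-pair condition (your remark that it is ``precisely what is needed to run the descent in Lemma \ref{galois}'' is a slip --- that hypothesis belongs to Lemma \ref{etale}).

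One step is wrong as written, though easily repaired. To produce the trivializing cover $\tilde Y\to Y$ you pass through an Azumaya algebra $\shA_Y$ representing $\beta|_Y$ and claim that ``its associated $\PGL_n$-torsor produces a finite \'etale Galois cover on which $\beta_Y$ becomes trivial.'' A $\PGL_n$-torsor is not a finite \'etale cover, and splitting an Azumaya algebra by a single finite \'etale surjective morphism is a nontrivial theorem; moreover this forces you to first establish $\Br(Y)\{\ell\}=\Br'(Y)\{\ell\}$ via an unproven ``prime-to-$p$ analogue of Theorem \ref{rcnb}.'' None of this is needed: lift $\beta$ to a class in $H^2_\et(X,\mu_{\ell^k})$ via Kummer, restrict to $Y$, and apply Proposition \ref{kpcover} (valid in any characteristic for locally constant constructible torsion sheaves, and $\mu_{\ell^k}$ is such a sheaf since $\ell\neq p$) to get a finite \'etale cover of $Y$ killing the class; dominate it by a Galois cover. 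With that substitution your argument closes, and the ``main obstacle'' you flag at the end disappears.
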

This partially extends a result of Grothendieck \cite[II,  Corollary 2.2]{GR1} for regular noetherian schemes of dimension less than 2 to varieties of arbitrary dimension.

Our main example of application is the case of an abelian variety $A$ over fields of characteristic zero. In Section \S \ref{S7} we apply our results along with  a general version of Riemann existence theorem for smooth algebraic groups  ( Lemma \ref{riemalg}),  to prove that $\Br(A) = \Br'(A)$ (Theorem \ref {abel}). This is an alternative proof to the ones proposed by Hoobler \cite{HOO1} and Berkovich \cite{BERK}.

In the last section \S \ref{S8}   we give a characterization of  smooth quasi-projective varieties $X$ with  $ \pi_2^\et(X, \bar{x})=0$ by  means  of  pro-universal covers (Proposition \ref {carpi}). This characterization  provides independently a sufficient condition under which the Brauer map is surjective. Namely, we prove the following:

\begin{theorem}{\rm (Proposition \ref {probr})}  
Let $X$  be a regular connected scheme of finite type over a field k of characteristic 0, and  $\hat f: \hat X \ra X$  the pro-universal cover of X, where $\hat{X}:= \varprojlim X_i$  is the limit of the  projective system of finite \'etale  covers of $X$. Suppose that $H_{\et}^2( \hat{X},  {\hat f}^{*}\shF)= 0$ for every locally constant constructible torsion \'etale sheaf $ \mathcal{F}$ on $X$. Then $\Br(X)= \Br'(X)$.
\end{theorem}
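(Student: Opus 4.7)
The plan is to reduce the surjectivity of $\delta$ to the descent principle already established in Lemma~\ref{etale}: if some finite \'etale cover of $X$ kills a cohomological Brauer class $\beta$, then $\beta$ lifts to an Azumaya algebra. So it suffices, given any $\beta \in H^2_{\et}(X, \GG_{m,X})_{\rm tor}$, to produce a finite \'etale cover $X_i \to X$ in the projective system defining $\hat X$ on which $\beta$ becomes zero.

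To find such a cover, I would first pass to a mod-$n$ class via Kummer theory. Let $n$ be the order of $\beta$. Since $k$ has characteristic $0$, the Kummer sequence
$$1 \lra \mu_n \lra \GG_m \stackrel{n}{\lra} \GG_m \lra 1$$
is exact in the \'etale topology on $X$, and the associated long exact sequence yields a surjection $H^2_{\et}(X, \mu_n) \twoheadrightarrow H^2_{\et}(X, \GG_m)[n]$. Pick a lift $\tilde\beta \in H^2_{\et}(X, \mu_n)$ of $\beta$. The sheaf $\mu_n$ is locally constant, constructible, and torsion on $X$, so the standing hypothesis applies and $\hat f^{\,*}\tilde\beta = 0$ in $H^2_{\et}(\hat X, \hat f^{\,*}\mu_n)$.

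Next I would apply the continuity theorem for \'etale cohomology of cofiltered limits. The $X_i \to X$ are finite \'etale, hence affine, and each $X_i$ is qcqs because $X$ is of finite type over $k$; therefore by the standard limit theorem (SGA~4, VII.5.8)
$$H^2_{\et}(\hat X,\, \hat f^{\,*}\mu_n) \;\iso\; \varinjlim_{i}\, H^2_{\et}(X_i,\, f_i^{*}\mu_n).$$
Vanishing in the colimit means that there already exists an index $i$ such that $f_i^{*}\tilde\beta = 0$ in $H^2_{\et}(X_i,\mu_n)$, and projecting back through the Kummer map we get $f_i^{*}\beta = 0$ in $H^2_{\et}(X_i, \GG_m)$. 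Since $f_i: X_i \to X$ is a finite \'etale cover trivializing $\beta$, Lemma~\ref{etale} gives $\beta \in \Br(X)$. As $\beta$ was arbitrary, $\delta$ is surjective, proving $\Br(X) = \Br'(X)$.

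The argument is essentially a bookkeeping one: the three ingredients (Kummer lifting, continuity of \'etale cohomology under affine cofiltered limits, and the descent criterion along finite \'etale covers) combine directly. The only point requiring mild care is to check that the projective system $\{X_i\}$ satisfies the hypotheses of the limit theorem, but this is immediate from finite type over $k$ plus the affineness of finite \'etale morphisms, so there is no serious obstacle.
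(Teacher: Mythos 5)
Your main mechanism is sound and is in fact a more direct route than the paper's. The paper runs the Hochschild--Serre spectral sequence for $\hat f:\hat X \ra X$, uses the vanishing of $H_{\et}^1$ and $H_{\et}^2$ of $\hat X$ to identify $H_{\et}^2(X,\mu_{n,X})$ with $H^2(\pi_1^{\et}(X,\bar x),\,\cdot\,)$, writes that group as a colimit over finite quotients, and extracts a Galois cover by a group-cohomology argument. You bypass all of this: the colimit formula $H_{\et}^2(\hat X, \hat f^{*}\mu_{n,X}) \iso \varinjlim_i H_{\et}^2(X_i, f_i^{*}\mu_{n,X})$ (which the paper itself records at the start of Section 8), together with the fact that an element of a filtered colimit vanishes if and only if it already vanishes at some finite stage, immediately yields a finite \'etale cover $X_i \ra X$ killing the Kummer lift $\tilde\beta$, hence killing $\beta$. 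The Kummer lifting and the continuity theorem are both applied correctly.

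The last step, however, has a genuine gap: you invoke Lemma~\ref{etale}, whose hypothesis is that every finite set of points of $X$ lies in an affine open. That condition is not among the hypotheses of the proposition, and a regular connected scheme of finite type over $k$ need not satisfy it (smooth proper non-projective threefolds already fail it for suitable pairs of points), so as written your argument proves a different statement. The hypothesis actually available is regularity, and the matching descent criterion is Lemma~\ref{galois}, which instead requires the trivializing cover to be \emph{Galois}. The repair is short: the finite \'etale cover $X_i \ra X$ you produce is dominated by a finite \'etale Galois cover $Y \ra X$ (its Galois closure; equivalently, take the pro-system defining $\hat X$ to consist of Galois covers from the outset, as the paper notes is possible), the pullback of $\beta$ to $Y$ is still zero, and Lemma~\ref{galois} then gives $\Br(X)=\Br'(X)$. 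With that one substitution your proof is complete, and it is genuinely simpler than the paper's.
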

Here is the plan of the paper:  Section \S \ref{S1}  is  preliminaries on the construction of Brauer groups and Brauer map for  schemes. Section \S \ref{S2}  is devoted to the study of  analytic Brauer  groups with applications to the complex analytic space $X^{an}$. In section \S \ref{S3}  we review briefly the construction of the \'etale homotopy groups $\pi_n^\et(X,  \bar x)$ and algebraic $K(\pi,1)$ spaces, and we discuss  their resulting  consequences on the Grothendieck question. In sections \S \ref{S4} and \ref{S5}   we apply previous results  to solve the problem for some proper and smooth schemes. In section \S \ref{S6}  we discuss a local condition for the surjectivity of Brauer map for smooth varieties. Section \S \ref{S7}  is an application to abelian varieties. Pro-universal covers will be reviewed in section \S \ref{S8}  with its applications on our problem.
\begin{notation}
Throughout this paper we consider the following notations:
\begin{itemize}
 \item   A $\mathbf {variety }$ over a field $k$ is a  separated,  geometrically integral scheme of finite type over $k$. In particular a variety is quasi-compact and quasi-separated.
 \item $\mu_{n, X}$ the \'etale sheaf of $n$-th roots of unity on $X$.
 \item  $\GG_{m, X}$ the \'etale sheaf of multiplicative groups  on $X$.
 \item  If $Y \lra X$ is a morphism of schemes,  we denote by  $\mu_{n, Y}$ (resp. $\GG_{m, Y}$)   the pullback of $\mu_{n, X}$  (resp. of $\GG_{m, X}$)    to $Y$.
 \item  For an abelian group $A$,  $n$ an integer and $l$ a prime,  we denote by $A[n]$ the $n$-torsion subgroup  of $A$,  and by $A(l)$ the $l$-primary subgroup of $A$. The notations  ${}_n\! A$ and $A_n$  will stand for the kernel and the cokernel of the endomorphism $a \ra n.a$ of $A$.
 \item For a  field $k$, we denote by  $\bar k$  the separable closure of $k$. If $X$ is a scheme over $k$, and $k \subset K$ a field extension, $X_K= X \times_k K$ denotes the base change of $X$to $K$.
\end{itemize}
\end{notation}
\section{Preliminaries}\label{S1}

Following Grothendieck \cite{GR1} and Milne  \cite[Chapter IV]{MIL},  we recall some elementary facts needed in  the sequel about Brauer groups of schemes.

 Let $X$  be a scheme.  An Azumaya algebra  $\shA$ on $X$  is a coherent $\shO_X$-algebra which is a locally free $\shO_X$-module of finite rank and satisfies one of the following equivalent conditions

\begin{itemize}
 \item [(i)]  For every point  $x \in X$,   $\shA_x$ is an Azumaya algebra over $\shO_{X, x}$.
\item [(ii)] For every point $x \in X$ the fiber $\shA_x \otimes k(x)$ is a central simple algebra over the residue field $k(x)$.
\item [(iii)] The natural morphism $\shA \otimes_{\shO_X} \shA^{op} \lra \shE nd_{\shO_X}(\shA)$ is an isomorphism.
\item [(iv)] There is a covering $(U_i \ra X)$ in the \'etale topology on $X$ such that for each $i$ there exists an $n_i$ such that $ \shA \otimes_{\shO_X} \shO_{U_i} \simeq M_{n_i}(\shO_{U_i})$.
\item [(v)] There is a covering $(U_i \ra X)$ in the flat topology on $X$ such that for each $i$ there exists an $n_i$ such that $ \shA \otimes_{\shO_X} \shO_{U_i} \simeq M_{n_i}(\shO_{U_i})$.
\end{itemize}
Two Azumaya algebras  $\shA_1$ and $\shA_2$ are called Morita equivalent if there exist locally free $\shO_X$-modules $\shM_1$ and $\shM_2$ of finite rank,  and an isomorphism
$$
\shA_1 \otimes_{\shO_X} \shE nd_{\shO_X}(\shM_1)  \simeq \shA_2 \otimes_{\shO_X} \shE nd_{\shO_X}(\shM_2) 
$$
The set of classes of Azumaya algebras on $X$ is a group called the Brauer group of $X$ and denoted by $\Br(X)$. The group law is given by tensor product,  the  inverse of a class $[\shA]$ is the class of its opposite algebra $[\shA^{op}]$,  and the unit element has the form $\shE nd_{\shO_X}(E)$,  where $E$ is a locally free  $\shO_X$-module.

Fix an integer $n \geq 2$, and consider the following exact sequence of \'etale sheaves on $X$  (cf.  \cite[Chapter IV, Corollary 2.4]{MIL})
$$ 1 \longrightarrow  \mathbb{G}_{m,X}    \longrightarrow  \GL_n(\mathcal{O}_X)  \lra  \PGL_n(\mathcal{O}_X)   \longrightarrow 1$$ 
Non abelian cohomology yields an exact sequence of \v Cech cohomology groups
$$...  \longrightarrow   \check{H}_{\et}^1(X, \mathbb{G}_{m,X} )   \longrightarrow    \check{H}_{\et}^1(X,  \GL_n(\mathcal{O}_X)  \lra  \check{H}_{\et}^1(X,   \PGL_n(\mathcal{O}_X)    \overset{\delta_n}\longrightarrow  \check{H}_{\et}^2(X, \mathbb{G}_{m,X})$$ 

The following is a fundamental result in the theory of Brauer groups of schemes.
\begin{theorem} \label{groth}
Let  $X$  be a scheme,  then we have the  following statements
\begin{itemize}
\item [(i)]  The set of classes of Azumaya algebras  of  rank $n^2$ is isomorphic  to the cohomology group $\check{H}_{\et}^1(X,   \PGL_n(\mathcal{O}_X)$.
\item [(ii)]  The  maps ${\delta_n}$ induce a  group homomorphism $\delta': \Br(X) \ra H_{\et}^2(X, \GG_{m,X})$.
\item [(iii)]   This homomorphism  $\delta': \Br(X) \ra H_{\et}^2(X, \GG_{m,X})$ is injective.
\item [(iv)]    $Im({\delta_n}) \subseteq  H_{\et}^2(X, \GG_{m,X})[n]$.
\end{itemize}
\end{theorem}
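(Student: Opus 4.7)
My plan is to attack the four parts through non-abelian \v Cech cohomology applied to the short exact sequence of \'etale sheaves
\begin{equation*}
1 \lra \GG_{m,X} \lra \GL_n(\shO_X) \lra \PGL_n(\shO_X) \lra 1
\end{equation*}
and its companion obtained by replacing $\GG_{m,X}$ by $\mu_{n,X}$ and $\GL_n$ by $\SL_n$. For part (i), I would invoke Skolem--Noether at each geometric point to identify $\PGL_n(\shO_X)$ with the sheaf of $\shO_X$-algebra automorphisms of the trivial matrix algebra $M_n(\shO_X)$. Condition (iv) of the definition of an Azumaya algebra then says precisely that any rank-$n^2$ Azumaya algebra $\shA$ is an \'etale-local twisted form of $M_n(\shO_X)$, and the standard classification of twisted forms by non-abelian $\check H^1$ yields the desired bijection with $\check H^1_\et(X, \PGL_n(\shO_X))$. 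For part (iv), the inclusion $\mu_{n,X} \hookrightarrow \GG_{m,X}$ extends to a morphism between the two short exact sequences, giving a commutative square through which $\delta_n$ factors via $\check H^2_\et(X, \mu_{n,X})$; the latter group is $n$-torsion, so $\im(\delta_n) \subseteq H^2_\et(X, \GG_{m,X})[n]$.

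For part (ii), I would show that the map $\bigsqcup_n \check H^1_\et(X, \PGL_n(\shO_X)) \to \check H^2_\et(X, \GG_{m,X})$ factors through Morita equivalence and is additive. Morita invariance reduces to checking that the block embedding $\PGL_n \hookrightarrow \PGL_{nm}$ defined by tensoring with $\id_{M_m(\shO_X)}$ fits into a morphism of the first short exact sequence that is the identity on $\GG_{m,X}$, so the associated $\check H^2$-classes coincide. Additivity is the analogous statement for the multiplication map $\PGL_n \times \PGL_m \to \PGL_{nm}$ induced by the tensor product of Azumaya algebras, which again restricts to the identity on $\GG_{m,X}$ and hence sends the sum of two $\check H^1$-classes to the sum of their boundaries.

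For part (iii), suppose $\delta([\shA]) = 0$ with $\shA$ of rank $n^2$ and let $[P_\shA] \in \check H^1_\et(X, \PGL_n(\shO_X))$ be its class. Exactness of the long non-abelian cohomology sequence attached to the first short exact sequence provides a lift of $[P_\shA]$ to a class in $\check H^1_\et(X, \GL_n(\shO_X))$, i.e.\ a locally free $\shO_X$-module $\shE$ of rank $n$ with $\shA \iso \shEnd_{\shO_X}(\shE)$; hence $[\shA] = 0$ in $\Br(X)$. The main obstacle I foresee is the cocycle-level verification in (ii): because $\PGL_n$ is non-abelian, one must keep careful track of \v Cech $1$-cocycles through the block and tensor-product embeddings and check that the coboundaries computed in the abelian sheaf $\GG_{m,X}$ really do add correctly and do not depend on the ranks chosen. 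A minor technical point is that one works throughout with \v Cech $H^2$ on the right; this is harmless because $\check H^1 = H^1$ for any sheaf of groups and because the image of $\delta_n$ a priori lies in $\check H^2_\et(X, \GG_{m,X}) \subseteq H^2_\et(X, \GG_{m,X})$.
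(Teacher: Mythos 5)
The paper offers no argument of its own here: it simply points to Grothendieck [GR1, I, Prop.\ 1.4] and to Milne's \v Cech-cohomological proof [MIL, IV, Thm.\ 2.5 and Prop.\ 2.7]. Your sketch reconstructs essentially that proof, and parts (i)--(iii) are sound: (i) is Skolem--Noether plus the classification of \'etale twisted forms of $M_n(\shO_X)$ by non-abelian $\check{H}^1(X,\PGL_n(\shO_X))$; (iii) is the exactness of $\check{H}^1(\GL_n)\ra\check{H}^1(\PGL_n)\ra\check{H}^2(\GG_m)$ for a central extension, which hands you the rank-$n$ module $\shE$ with $\shA\simeq\shEnd_{\shO_X}(\shE)$. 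For (ii), note that your block embedding only treats $\shEnd(\shM)$ with $\shM$ free; for general locally free $\shM$ the cleanest route is to prove additivity first (via $\PGL_n\times\PGL_m\ra\PGL_{nm}$, which is the identity $\GG_m\times\GG_m\ra\GG_m$-multiplication on centres) and then observe $\delta([\shEnd(\shM)])=0$ because $[\shEnd(\shM)]$ lifts to $[\shM]\in\check{H}^1(X,\GL_m)$. The cocycle bookkeeping you worry about does close up.

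Part (iv), however, has a genuine gap at the stated level of generality ($X$ an arbitrary scheme). The companion sequence $1\ra\mu_{n,X}\ra\SL_n(\shO_X)\ra\PGL_n(\shO_X)\ra 1$ is exact as a sequence of \emph{\'etale} sheaves only when $n$ is invertible on $X$: surjectivity of $\SL_n\ra\PGL_n$ on stalks requires rescaling a lift in $\GL_n$ by an $n$-th root of a unit, and $T^n-u$ is not \'etale over a strictly henselian local ring whose residue characteristic divides $n$ (such a root need not exist in any \'etale extension). So the asserted factorization of $\delta_n$ through $\check{H}^2_{\et}(X,\mu_{n,X})$ is unavailable in general. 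Two standard repairs: (a) run the $\mu_n$--$\SL_n$ sequence in the fppf topology, where it is exact, and use Grothendieck's theorem that $H^i_{\fppf}(X,\GG_{m,X})\simeq H^i_{\et}(X,\GG_{m,X})$ for the smooth group $\GG_m$ to transport the torsion statement back; or (b) avoid $\SL_n$ entirely and map the extension $1\ra\GG_m\ra\GL_n\ra\PGL_n\ra 1$ to $1\ra\GG_m\ra\GG_m\ra 1\ra 1$ via $\lambda\mapsto\lambda^n$ on the centre and $\det$ on the middle term; functoriality of the boundary map for morphisms of central extensions then gives $n\cdot\delta_n(c)=0$ outright, since the target extension has trivial quotient. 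With either fix your argument for (iv), and hence the whole theorem, goes through.
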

\proof
This is the original Grothendieck statement   \cite[I,  Proposition 1.4]{GR1}. Milne gave in  \cite[Chapter IV,  Theorem 2.5 and Proposition 2.7 ]{MIL}  a proof for  \v Cech cohomology and another general proof for \'etale cohomology by means of  gerbes theory.
\qed

\medskip
The group $H_{\et}^2(X, \GG_{m,X})$ is called the cohomological Brauer group,  or Brauer-Grothendieck group. As observed by Grothendieck,  the map $\delta': \Br(X) \ra H_{\et}^2(X, \GG_{m,X})$ is not bijective in general. Indeed,  for quasi-compact schemes $\Br(X)$ is always torsion (\cite[I,  Section 2]{GR1}),  while there exists a normal surface $S$ such that $H_{\et}^2(S, \GG_{m,S})$  is not torsion (\cite[II,  1.11.b]{GR1}).

Let $X$ be a quasi-compact scheme. Denote by $\Br'(X) := H_{\et}^2(X, \GG_{m,X})_{\rm tor}$ the torsion part of the cohomology group  $H_{\et}^2(X, \GG_{m,X})$,  and consider the map
$$\delta: \Br(X) \lra \Br'(X)$$
called the Brauer map. Grothendieck asked the the following question: 

\medskip
$\mathbf {QUESTION: }$ Is $\delta: \Br(X) \lra \Br'(X)$ surjective for quasi-compact schemes ?
\medskip

To answer this question, almost all our results in this paper will be based on the two following fundamental lemmas. The first one requires an affine  topological condition.
\begin{lemma}\cite[Proposition 3]{HOO2} 
\mylabel{etale}
Let $X$ be a scheme such that any finite set of points is contained in affine open scheme,  and  $\alpha \in H_{\et}^2(X, \GG_{m, X})$. If there exists a finite \'etale cover $f: Y \rightarrow X$ such that $f^{*}(\alpha)$=0 in $ H_{\et}^2(Y, \GG_{m, Y})$, then $\Br(X)= \Br'(X)$.
\end{lemma}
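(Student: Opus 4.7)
The plan is to realise the given class $\alpha$ as the image under the Brauer map $\delta$ of an explicit Azumaya algebra on $X$, by first producing an $\alpha$-twisted locally free sheaf of positive rank on $X$ via pushforward from the trivialising cover $f$, and then taking its endomorphism algebra. This follows the strategy of Hoobler and its later reformulation in terms of twisted sheaves, as recalled in the introduction.

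First I would exploit the hypothesis that every finite subset of $X$ lies in an affine open to identify \v Cech and derived \'etale cohomology of $\GG_{m,X}$ in degrees $\le 2$; this classical comparison allows me to represent $\alpha$ either by a \v Cech $2$-cocycle on an \'etale refinement of $f$ or, equivalently and more conveniently, by a $\GG_{m,X}$-gerbe $\shG_{\alpha}\to X$. The hypothesis $f^{*}\alpha=0$ then translates into the triviality of the pulled back gerbe $\shG_{\alpha}\times_{X}Y\to Y$, which therefore admits a global section; this section is the same data as an $f^{*}\alpha$-twisted invertible sheaf $L$ on $Y$, namely a line bundle on $\shG_{\alpha}\times_{X}Y$ on which the inertial $\GG_{m}$ acts by weight one.

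The heart of the construction is the pushforward $f_{*}L$ on $X$. Because $f$ is finite \'etale of some degree $n$ and $L$ is locally free of rank one, $f_{*}L$ is a locally free $\shO_{X}$-module of rank $n>0$, and the projection formula endows it with a natural $\alpha$-twisted structure. The sheaf $\shEnd_{\shO_{X}}(f_{*}L)$ is then an honest, untwisted $\shO_{X}$-algebra; checking locally on an \'etale cover of $X$ which simultaneously trivialises $Y$ and $L$, it becomes isomorphic to $M_{n}(\shO_{X})$, so that $\shEnd_{\shO_{X}}(f_{*}L)$ is an Azumaya algebra of rank $n^{2}$. Tracking the boundary map $\delta_{n}\colon H^{1}_{\et}(X,\PGL_{n}(\shO_{X}))\to H^{2}_{\et}(X,\GG_{m,X})$ through this \'etale trivialisation identifies its Brauer class with $\alpha$, exhibiting $\alpha$ inside $\Br(X)$. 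Applied to every torsion class in $H^{2}_{\et}(X,\GG_{m,X})$ for which such a trivialising cover exists, this yields the claimed equality $\Br(X)=\Br'(X)$.

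I expect the main obstacle to sit in the very first step, the \v Cech-to-derived comparison that legitimises the gerbe language and is exactly what forces the affine hypothesis on $X$: one needs enough finite subsets of $X$ to lie in a common affine open so that the gerbe $\shG_{\alpha}$ can be trivialised on overlaps of a suitable cover. A secondary technical point is to keep track of orientations when transporting the twisting from $L$ to $f_{*}L$, so as to be sure the resulting Brauer class is $+\alpha$ rather than $-\alpha$ or $0$; this is a formal verification via the projection formula for the finite \'etale map $f$, but it is where the finiteness (and not merely \'etaleness) of $f$ genuinely enters, ensuring that $f_{*}$ preserves coherence and sends invertible sheaves to locally free sheaves of rank $\deg(f)$.
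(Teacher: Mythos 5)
Your argument is correct, but note that the paper itself offers no proof of this lemma: it is quoted verbatim from Hoobler \cite[Proposition 3]{HOO2}, and the only commentary supplied is Remark \ref{h2cech}, which explains that the point-set hypothesis serves to identify \v Cech and derived \'etale cohomology in degree $2$ (by Artin's theorem \cite{ARTN}, refined by Schr\"oer \cite{SCH2} to require only that pairs of points lie in a common affine open). What you have written is the standard modern proof -- the twisted-sheaf/gerbe reformulation of Hoobler's construction due to de Jong \cite{DJNG} and Lieblich \cite{LIE}, which the introduction of this paper explicitly describes as a simplification of \cite{HOO2}. Hoobler's original argument carries out the same construction at the level of \v Cech cocycles: one represents $\alpha$ by a $2$-cocycle subordinate to (a refinement of) the cover $f\colon Y\ra X$ -- this is precisely where the affine hypothesis enters -- and uses it to twist the algebra structure on a pushforward from $Y$; your gerbe $\shG_{\alpha}$ and the weight-one invertible sheaf $L$ package exactly this cocycle data, and $\shEnd_{\shO_X}(f_{*}L)$ is the resulting Azumaya algebra. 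So the two routes differ only in language, with yours trading explicit cocycle manipulations for the cleaner bookkeeping of inertial weights.

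Two small points are worth making explicit. First, splitting by a finite \'etale cover of degree $n$ forces $n\alpha=0$ by a corestriction--transfer argument, so $\alpha$ is automatically torsion and the construction indeed lands in $\Br'(X)$. Second, as literally stated the lemma fixes a single class $\alpha$ yet concludes the full equality $\Br(X)=\Br'(X)$; your reading -- that the construction is to be applied to every torsion class admitting such a trivialising cover, which is exactly how the lemma is invoked throughout the paper -- is the intended one.
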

\begin{remark}\label{h2cech}
The condition proposed  above was in fact used because it implies that \v Cech and \'etale cohomology coincide for $X$ (cf. \cite[Section 4]{ARTN}). 
However,   since we are just  dealing  with  2-cohomology classes,  we can use the refined version of Schr\"oer \cite[Corollary 2.2]{SCH2} which states that  if any   pair $(x, y) \in X$ is contained in affine open scheme,  then the  2-\v Cech cohomology and 2-\'etale cohomology agree. For example, such a condition holds for open subschemes of  toric varieties (cf. \cite[Corollary 2.3]{SCH2}). 
\end{remark}

For a general scheme   for which  condition in Lemma \ref{etale} need not  be satisfied,  we have the following criterion  which involves Galois covers.

\begin{lemma}\cite[Corollary 1]{BERK} 
\mylabel{galois}
 Let  $X$ be  a regular scheme. If for every $\alpha \in H_{\et}^2(X, \GG_{m, X})$ there exists an \'etale Galois cover $g: Y \rightarrow X$ such that $g^{*}(\alpha)$=0 in $ H_{\et}^2(Y, \GG_{m, Y})$, then  $\Br(X)= \Br'(X)$.
\end{lemma}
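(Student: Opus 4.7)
My plan is to build the required Azumaya algebra representing $\alpha$ by pushing forward a twisted line bundle along the Galois cover, phrased in the gerbe-theoretic language already evoked in the introduction. Recall that to $\alpha\in H^2_\et(X,\GG_{m,X})$ one attaches a $\GG_{m,X}$-gerbe $\shX_\alpha\to X$, and that any finite locally free \emph{weight-one} sheaf $\shF$ on $\shX_\alpha$ of positive rank produces an Azumaya algebra on $X$ of class $\alpha$: the endomorphism sheaf $\shEnd(\shF)$ has weight zero, descends by fpqc descent to a locally free $\shO_X$-algebra $\shA$ of rank $(\rank \shF)^2$, and is Azumaya because it becomes a matrix algebra on the cover trivialising $\shF$ (no \v Cech-hypothesis on $X$ is needed for this direction).

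The hypothesis $g^{*}\alpha=0$ in $H^2_\et(Y,\GG_{m,Y})$ is exactly the assertion that the base-changed gerbe $\shX_\alpha\times_X Y$ is trivial, hence isomorphic to $Y\times B\GG_{m,Y}$. Pulling back the tautological weight-one line bundle on $B\GG_{m,Y}$ yields a weight-one invertible sheaf $\shL$ on $\shX_\alpha\times_X Y$, i.e.\ an $\alpha$-twisted line bundle on $Y$. Let $\tilde g\colon \shX_\alpha\times_X Y\to \shX_\alpha$ be the base change of $g$: since $g$ is finite étale Galois of degree $n:=|G|$, so is $\tilde g$. Thus $\shF:=\tilde g_{*}\shL$ is a finite locally free weight-one sheaf on $\shX_\alpha$ of constant rank $n$, and the first paragraph produces the sought Azumaya algebra $\shA$ on $X$ with $\delta[\shA]=\alpha$.

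The main technical point is the verification in Step 1: that $\shEnd(\shF)$ really descends to an Azumaya algebra whose class equals $\alpha$, i.e.\ that the gerbe of $\shA$-trivialisations coincides with $\shX_\alpha$. This identification becomes transparent after restricting to the Galois cover $Y$, where $\shF$ becomes the explicit weight-one sheaf $\shL^{\oplus n}$; the regularity of $X$ enters to keep the descent and the local matrix-algebra identification well behaved. A reader preferring a cocycle-theoretic proof closer to Berkovich's original can instead run the Hochschild-Serre spectral sequence
$H^p(G,H^q_\et(Y,\GG_{m,Y}))\Rightarrow H^{p+q}_\et(X,\GG_{m,X})$: its five-term exact sequence presents $\ker(g^{*})$ as an extension of a subgroup of $H^1(G,\Pic(Y))$ by the image of $H^2(G,\Gamma(Y,\shO_Y^{*}))$, and after enlarging $Y$ to kill the $\Pic$-obstruction (the subtle step, which is where regularity is decisively used) one lifts $\alpha$ to a $2$-cocycle $c\colon G\times G\to\Gamma(Y,\shO_Y^{*})$ and forms the crossed-product algebra $\bigoplus_{\sigma\in G}\shO_Y\, e_\sigma$ with $e_\sigma f=\sigma(f)e_\sigma$ and $e_\sigma e_\tau=c(\sigma,\tau)e_{\sigma\tau}$, which descends along $g$ to the required Azumaya algebra on $X$.
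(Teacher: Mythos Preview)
The paper does not prove this lemma; it is stated with a bare citation to Berkovich \cite[Corollary 1]{BERK} and no proof is supplied. So there is no ``paper's own proof'' to compare against, only Berkovich's original argument.

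Your gerbe-theoretic first approach is correct and is essentially the argument of Lieblich and de Jong that the paper already alludes to in the introduction: a finite locally free weight-one sheaf of positive rank on $\shX_\alpha$ yields an Azumaya algebra on $X$ of class $\alpha$, and the pushforward $\tilde g_*\shL$ of a twisted line bundle along a finite \'etale cover supplies exactly such a sheaf. Two small comments. First, neither the Galois hypothesis nor regularity is actually used in this construction: finite \'etale (even finite locally free) suffices for $\tilde g_*\shL$ to be locally free, and the identification of the Brauer class with $\alpha$ is formal gerbe theory. In particular your sentence ``the regularity of $X$ enters to keep the descent and the local matrix-algebra identification well behaved'' is not right; regularity is used elsewhere, namely to guarantee that $H^2_\et(X,\GG_{m,X})$ is torsion, so that $\Br'(X)=H^2_\et(X,\GG_{m,X})$ and the conclusion $\Br=\Br'$ makes sense as stated. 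Second, $\tilde g^*\tilde g_*\shL$ is $\bigoplus_{\sigma\in G}\sigma^*\shL$, not $\shL^{\oplus n}$; this does not affect the argument but your parenthetical is slightly off.

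Your second sketch via the Hochschild--Serre spectral sequence and crossed-product algebras is indeed the route closest to Berkovich's original. You correctly isolate the genuine difficulty: the obstruction in $H^1(G,\Pic(Y))$ to lifting $\alpha$ to a $2$-cocycle with values in global units. Your phrase ``after enlarging $Y$ to kill the $\Pic$-obstruction'' is a real gap as written: it is not at all clear, for a general regular $X$, that one can pass to a further Galois cover $Z\to X$ on which this obstruction vanishes, and you do not indicate how. This is precisely the point where Berkovich does nontrivial work (and where regularity is genuinely used in his approach). Since your first argument already establishes the lemma cleanly and without this subtlety, the sensible choice is to present the gerbe argument as the proof and drop the second sketch, or else cite Berkovich for the crossed-product version rather than leaving the key step unresolved.
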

\begin{remark}
If $\beta \in \Br'(X)$,  then by Theorem \ref{groth}.(iv)  there is an integer $n$ with $n.\beta$= 0,  and a class $\alpha \in  H_{\et}^2(X, \mu_{n, X})$ mapping to $\beta$. Therefore,   covers in the above lemmas  can be chosen sufficiently  to trivialize   classes  of  $ H_{\et}^2(X, \mu_{n, X})$. This comes with very nice consequences, since the \'etale sheaf $\mu_{n, X}$  belongs to the category of  locally constant constructible torsion \'etale sheaves, which will play a crucial rule in this paper.

\end{remark}

\section{Case of  schemes over $\CC$}\label{S2}

\mylabel{analytic brauer}
In this section,  we give some elementary results on the analytic Brauer group,  which are  closely related to the Brauer group of schemes over complex numbers.

Let $(X, \shO_X)$ be a complex analytic space,  where $\shO_X$ is the sheaf of holomorphic functions. An Azumaya algebra on  $X$ is an associative
(non-commutative)  $\shO_X$-algebra $\shA$ which is locally (in the analytic topology) isomorphic to a matrix algebra $M_n(\shO_X)$ for some $n > 0$. Working with cohomology of sheaves, all facts  in the previous section could  be applied to define  the analytic Brauer group $\Br(X)$ of $X$,  and hence we get a well defined injective Brauer map $\delta: \Br(X) \ra  \Br'(X):=H^2(X, \GG_{m, X})_{\rm tor}$ (see \cite{HS} for more details). Equivalently, one can define $\Br(X)$ as the set of equivalence classes of  principal $\PGL_n$-bundles via the boundary  maps $\delta_n:  H^1(X,   \PGL_n(\mathcal{O}_X))   \ra  H^2(X, \mathbb{G}_{m,X})$ (cf. \cite{SCH3}). In particular, when $X$ is  a topological $K(\pi,1)$ space (see definition bellow), the group $\Br(X)$ can be defined  in terms of projective representations $\rho: \Gamma \ra \PGL_n(\CC)$ of the topological fundamental group $\Gamma = \pi_1(X)$. This fact was used by Elencwajg and Narasimhan  \cite{EN} to prove that $\Br(X) = \Br'(X)$ for complex torus.

Schr\"oer proved (see theorem below) that the surjectivity of the Brauer map  $\delta: \Br(X) \ra \Br'(X)$   for complex analytic spaces depends only on the homotopy type of their underlying topological space. He  used the following notion of  $\mathbf {good \; groups}$  introduced by Serre in \cite{SER}: Let $G$ be a group endowed with the discrete topology,  and $\hat G = \varprojlim G/N$ its profinite completion, where the limit runs over all normal subgroups $N \subset G$. By construction, the group $\hat G$
 carries the inverse limit topology. Let $M$ be a finite discrete $G$-module,  that is a $G$-module which is  finite as a set. The action of $G$ induces a natural action of $\hat G$ on $M$. We say that $G$ is good  or of type $\shA_n$ \cite[Chapter I,  \S 2.6, b.2]{SER}  if the natural morphism of cohomology groups
$$
H^n(\hat G,  M) \lra H^n(G, M)
$$
induced by the natural morphism $G \lra \hat G$ is an isomorphism for all $n \geq 0$. The following types of groups are examples of  good groups:
\begin{itemize}
\item Free groups,  finite groups.
\item Almost free groups,  almost polysyclic groups (see \cite{SCH3}).
\item Bianchi groups $\PSL(2,\shO_d)$, where $\shO_d$ is the ring of integers in an imaginary quadratic number filed $\QQ(\sqrt{-d})$  (\cite{GJZ}).
\item Right-angled Artin groups (\cite{KL}).

\end{itemize}

Note that   Lorensen \cite{KL} developed a construction by means of Mayer-Vietoris sequence to prove that  free products with amalgamation and HNN extensions of some classes of good groups, are in fact good groups. This construction was also employed by Grunewald-Zapirain-Zalesskii in \cite{GJZ} along with some equivalent properties of goodness to provide many examples of good groups including Bianchi groups (eg. $\shF$-groups, Limit groups,...).

\begin{theorem}
\label{schr}
\cite[Theorem 4.1]{SCH3}  Let  $X$ be   a complex analytic space.  Suppose that the topological fundamental group $\pi_1(X)$   is good,  and that  the  subgroup of $\pi_1(X)$-invariants inside the Pontryagin dual is trivial, i.e. $\Hom(\pi_2(X),  \QQ / \ZZ) ^{\pi_1(X)}=0$. Then  $\Br(X)= \Br'(X)$.
\end{theorem}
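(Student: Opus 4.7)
The plan is to reduce, via the analytic analogue of Lemma~\ref{galois}, to finding for every class $\alpha\in H^2(X,\mu_{n,X})$ a finite topological covering $f\colon Y\to X$ with $f^{*}\alpha=0$. Since $\Br'(X)$ is torsion and the analytic Kummer sequence $1\to \mu_{n,X}\to \GG_{m,X}\to \GG_{m,X}\to 1$ is exact, every $\beta\in\Br'(X)$ lifts to such an $\alpha$ for some $n$; because $X$ is locally contractible, $\mu_{n,X}$ is the constant sheaf $\ZZ/n$ and its sheaf cohomology agrees with singular cohomology, so I can freely use topological tools.

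The core device is the Cartan--Leray (Hochschild--Serre) spectral sequence attached to the universal cover $p\colon \widetilde X\to X$ with deck group $\Gamma=\pi_1(X)$,
$$
E_2^{p,q}=H^p\bigl(\Gamma,\,H^q(\widetilde X,\mu_n)\bigr)\Longrightarrow H^{p+q}(X,\mu_n).
$$
Simple connectivity gives $H^1(\widetilde X,\mu_n)=0$, while the Hurewicz theorem combined with universal coefficients yields a $\Gamma$-equivariant isomorphism $H^2(\widetilde X,\mu_n)\iso \Hom(\pi_2(X),\mu_n)$. The inclusion $\mu_n\hookrightarrow \QQ/\ZZ$ together with the hypothesis $\Hom(\pi_2(X),\QQ/\ZZ)^{\Gamma}=0$ forces the term $E_2^{0,2}=\Hom(\pi_2(X),\mu_n)^{\Gamma}$ to vanish, and a direct check shows $E_2^{0,1}=E_2^{1,1}=0$ as well, so the only surviving contribution in total degree $2$ is $E_\infty^{2,0}=E_2^{2,0}=H^2(\Gamma,\mu_n)$. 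Consequently the inflation map $H^2(\Gamma,\mu_n)\to H^2(X,\mu_n)$ is an isomorphism, and I lift $\alpha$ to some $\bar\alpha\in H^2(\Gamma,\mu_n)$.

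Now the goodness of $\Gamma$ enters. By definition, the comparison $H^2(\Gamma,\mu_n)\iso H^2(\widehat\Gamma,\mu_n)$ is an isomorphism, and continuous cohomology of a profinite group with finite discrete coefficients is exhausted by inflation from finite quotients. Hence there is an open normal subgroup $N\vartriangleleft\widehat\Gamma$ such that the restriction of $\bar\alpha$ to $H^2(N,\mu_n)$ vanishes. Setting $H:=N\cap\Gamma$, a finite-index subgroup of $\Gamma$, the quotient $Y:=\widetilde X/H$ yields a finite covering $f\colon Y\to X$; functoriality of the Cartan--Leray spectral sequence then identifies $f^{*}\alpha\in H^2(Y,\mu_n)$ with the image of $\bar\alpha|_{H}=0$ under the analogous inflation isomorphism for $Y$, so $f^{*}\alpha=0$ as required.

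The main obstacle I anticipate is setting up the analytic analogue of Lemma~\ref{galois} itself: one must descend a $\PGL_n$-bundle, or equivalently a locally free twisted sheaf of positive rank, from the finite covering $Y$ back to $X$ in the complex-analytic category, using the non-split extension $1\to\GG_m\to\GL_n\to\PGL_n\to 1$ in analytic cohomology. A secondary point is to make explicit that the Hurewicz identification $H^2(\widetilde X,\mu_n)\iso \Hom(\pi_2(X),\mu_n)$ is truly $\Gamma$-equivariant; this follows from the naturality of Hurewicz applied to the deck transformations of $\widetilde X$.
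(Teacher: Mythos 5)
Your argument is correct and is essentially the approach the paper itself follows: although the paper only cites Schr\"oer \cite{SCH3} for this statement, its proof of Theorem~\ref{rcnb} is the exact \'etale transposition of your Cartan--Leray/Hurewicz/goodness argument (with goodness rendered automatic by the profiniteness of $\pi_1^{\et}$, and with Lemma~\ref{galois} playing the role of your final descent step). The analytic descent lemma you flag as the main obstacle is indeed the one external input; it is supplied in \cite{SCH3} and \cite{HS} by pushing an $\alpha$-twisted line bundle forward along the finite covering to obtain a twisted locally free sheaf of positive rank, whose endomorphism algebra is the required Azumaya algebra.
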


Let $X$ be a scheme of finite type over  over $\CC$. There is an associated analytic space $X^{an}$ whose underlying topological space is $X(\CC)$ the space of $\CC$-rational points of $X$. The following is  a first elementary result describing the link between Brauer maps for $X$ and $X^{an}$.

\begin{proposition}\label{desbr}
Let  $X$ be a    scheme of finite type over $\CC$.  Suppose that  $X^{an}$ is compact. Then  $\Br(X)= \Br'(X)$    if only if    $\Br(X^{an})= \Br'(X^{an})$. 
\end{proposition}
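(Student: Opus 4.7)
The plan is to promote the statement to a comparison of Brauer maps via analytification. Since $X^{an}$ is compact and $X$ is of finite type over $\CC$, the scheme $X$ is proper, so Serre's GAGA and Artin's comparison theorem for torsion \'etale cohomology apply. Analytification gives a morphism of ringed topoi $\epsilon : (X^{an}, \shO_{X^{an}}) \ra (X_{\et}, \shO_X)$, inducing natural pullback homomorphisms
\begin{equation*}
\alpha_X : \Br(X) \lra \Br(X^{an}), \qquad \beta_X : H_{\et}^2(X, \GG_{m, X}) \lra H^2(X^{an}, \GG_{m, X^{an}}),
\end{equation*}
with $\beta_X$ sending torsion to torsion. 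Because the construction of the Brauer map $\delta$ in Theorem \ref{groth} is natural under pullback of ringed topoi (the Azumaya algebra and its associated $\GG_m$-gerbe are both transported by $\epsilon^{*}$), the Brauer maps for $X$ and $X^{an}$ sit in a commutative square with horizontal arrows $\delta_X, \delta_{X^{an}}$ and vertical arrows $\alpha_X, \beta_X$. The strategy is to show that $\alpha_X$ and the restriction of $\beta_X$ to torsion are both isomorphisms; the equivalence of surjectivity of $\delta_X$ and $\delta_{X^{an}}$ will then follow by a diagram chase.

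For $\alpha_X$, Serre's GAGA supplies an equivalence between coherent $\shO_X$-modules and coherent $\shO_{X^{an}}$-modules that is compatible with tensor products, duals and $\shEnd$. Using the characterization (iii) in Section \ref{S1}, namely that an Azumaya algebra is a coherent $\shO$-algebra $\shA$ such that $\shA \otimes \shA^{op} \ra \shEnd(\shA)$ is an isomorphism, analytification restricts to a bijection between isomorphism classes of Azumaya algebras on $X$ and on $X^{an}$. Morita equivalence (which is also stated in terms of coherent $\shEnd$) is preserved in both directions, so $\alpha_X$ is an isomorphism of groups.

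For $\beta_X$, I compare the Kummer exact sequences $0 \ra \mu_n \ra \GG_m \xrightarrow{n} \GG_m \ra 0$, which hold on the \'etale site of $X$ and on the analytic site of $X^{an}$ (analytic surjectivity of the $n$-th power map on $\GG_{m, X^{an}}$ uses local existence of $n$-th roots of nonvanishing holomorphic functions). For every $n \ge 1$ the associated long exact sequences yield
\begin{equation*}
0 \lra \Pic(X)/n \lra H_{\et}^2(X, \mu_{n, X}) \lra H_{\et}^2(X, \GG_{m, X})[n] \lra 0
\end{equation*}
together with its analytic analogue. GAGA gives $\Pic(X) \iso \Pic(X^{an})$, while Artin's comparison for proper schemes with finite torsion coefficients gives $H_{\et}^2(X, \mu_{n, X}) \iso H^2(X^{an}, \mu_{n, X^{an}})$. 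The five lemma then produces an isomorphism $H_{\et}^2(X, \GG_{m, X})[n] \iso H^2(X^{an}, \GG_{m, X^{an}})[n]$ for every $n$; taking the union over $n$, which reconstructs $\Br'$ on each side, yields $\beta_X : \Br'(X) \iso \Br'(X^{an})$.

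With both vertical arrows iso and the square commuting, the cokernels of $\delta_X$ and $\delta_{X^{an}}$ are canonically identified, so one is surjective iff the other is, which is exactly the claim. The main point requiring care is the GAGA-type transport of Azumaya algebras: the analytification of an algebraic Azumaya algebra is analytically Azumaya by local triviality, and conversely every analytic Azumaya algebra, being coherent, descends uniquely to a coherent $\shO_X$-algebra via GAGA, whose Azumaya property follows by applying the equivalence of categories to the characterizing isomorphism $\shA \otimes \shA^{op} \iso \shEnd(\shA)$. Once this is in place, the rest is pure naturality and a diagram chase.
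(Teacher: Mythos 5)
Your proof is correct and follows essentially the same route as the paper: the same commutative square comparing the algebraic and analytic Brauer maps, with the top arrow an isomorphism via GAGA transport of Azumaya algebras (the content of Schr\"oer's Proposition 1.4, which the paper cites) and the bottom arrow an isomorphism on torsion via the Kummer sequence, GAGA for $\Pic$, and Artin's comparison theorem (the content of the cited Proposition 1.3). You simply unpack the two cited ingredients in detail rather than quoting them, and the diagram chase at the end is identical.
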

\proof
Consider the following commutative diagram 
$$
\begin{CD}
\Br(X) @>>> \Br(X^{an})\\
@VVV @VVV\\
 H_{\et}^2(X, \GG_{m, X}) @>>> H^2(X^{an}, \GG_{m, X^{an}})
\end{CD}
$$
The upper map is an isomorphism according to \cite[Proposition 1.4]{SCH3}. By   comparing  cohomology exact sequences induced by the Kummer exact  sequence for both $X$ and $X^{an}$, and  using the fact that   $ H_{\et}^2(X, \mu_{n, X}) \simeq  H^2(X^{an}, \mu_{n, X^{an}})$  by Artin  comparison theorem \cite[Exp. XVI, Theorem 4.1]{SGA4}  we conclude that the lower map is an isomorphism (see \cite[Proposition 1.3]{SCH3}). Hence the assertion.
\qed

\medskip

\begin{example}
Let $X$ be an  algebraic  K3 surface over $\CC$,  that is a complete non-singular variety of dimension two over $\CC$  such that $\Omega^2_{X/\CC} \simeq  \mathcal{O}_X$ and $H_{\zar}^1(X,  \mathcal{O}_X) = 0$. Its associated analytic space $Y=X^{an}$  is a complex K3 surface,  i.e. a  compact connected complex manifold of dimension two such that  $\Omega^2_{X} \simeq  \mathcal{O}_X$ and $H^1(X,  \mathcal{O}_X) = 0$.  According to  Huybrechts and  Schr\"oer result for analytic  K3 surfaces \cite[Theorem 1.1]{HS}  we have $\Br(Y) = \Br'(Y)$,  hence  Proposition \ref{desbr} asserts that $\Br(X) = \Br'(X)$.
\end{example}

In order to apply Theorem \ref{schr} to the space $X^{an}$ we need the two following notions of  $\mathbf {Artin \;  neighborhoods}$  and   $\mathbf {topological   \;  K(\pi,1)  \;  spaces } $.
 
Following  Artin \cite[Exp XI,  Section 3]{SGA4}
a morphism of schemes $f : X \ra S $ is called an elementary fibration
if there exists a  factorization   
\begin{center}

\begin{tikzpicture}
  \matrix (m) [matrix of math nodes, row sep=4em, column sep=4em, minimum width=2em]
 {
 X & \bar X  & Y \\
      & S & \\};
  \path[-stealth]
    (m-1-1) edge node [left] {$f$} (m-2-2)
            edge  node [above] {$j$} (m-1-2)
    (m-1-2) edge node [right] {$\bar f$} (m-2-2)
  (m-1-3)    edge node [above] {$i$}         (m-1-2)  
    (m-1-3) edge node [right] {$g$} (m-2-2);

\end{tikzpicture}
\end{center}
such
that
\begin{itemize}
\item [(i)]  $j$  is an open immersion and $X$ is fiberwise dense in $\bar X$.

\item [(ii)] $\bar f $  is a smooth and projective morphism whose geometric fibers are nonempty
irreducible curves.

\item [(iii)] The reduced closed subscheme $Y= \bar X  \backslash  X$  is finite and \'etale over $S$.
\end{itemize}
Let $k$ be a field. An Artin neighborhood (or a good neighborhood ) over  $\Spec(k)$ is a  scheme $X$  over $k$ such that there exists a sequence of $X$-schemes
$$X=X_n,..., X_0=\Spec(k)$$
with elementary fibrations $f_i: X_i \ra X_{i-1}$,  $i=1,..., n$.

Let $G$ be a group and $n$ a positive integer. A connected topological space $X$ is called an Eilenberg-MacLane space of type $K(G, n)$  if it has $n$-th homotopy group $\pi_n(X)$  isomorphic to $G$ and all other homotopy groups  trivial. In particular $X$ is called  a topological $ K(\pi,  1)$ space if it is weakly homotopy equivalent to the classifying space  $B\pi_1(X)$,  that is $\pi_n(X)=0$ for all $n \geq 2$. An equivalent definition of topological $ K(\pi,  1)$ spaces  in terms of cohomology is given as follows: Let  $x \in X$, there is a fully faithful functor 
$$ \rho^{*}: \pi_1(X,x){\rm-Mod}  \lra Sch(X)$$ from the category of $\pi_1(X,x)$-modules to the category of sheaves on $X$, whose essential image is the subcategory of locally constant sheaves on $X$. It associates to any  $\pi_1(X,x)$-module $M$ a locally constant sheaf $ \rho^{*}(M)$, with $(\rho^{*}(M))_x = M$ and $\Gamma(X,\rho^{*}(M)) = M^{\pi_1(X,x)}$. Therefore, the formalism of  universal $\delta$-functors gives rise to natural morphisms of cohomology groups $$\rho^{q}: H^q(\pi_1(X,x),M) \lra H^q(X, \rho^{*}(M))$$
The space $X$ is a topological $ K(\pi,  1)$ space if only if the morphisms $\rho^{q}$ are isomorphisms for all $q \geq 0$.
\begin{lemma}\label{khtp}
Let $X$ be a  connected scheme of finite type over $\CC$. If  X is an Artin neighborhood over $\Spec(\CC)$,  then
\begin{itemize}
\item [(i)]  $X(\CC)$ is a topological $K(\pi, 1)$ space.
\item [(ii)]   $\pi_1(X(\CC))$  is a good group.
\end{itemize}
\end{lemma}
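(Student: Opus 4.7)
The plan is to induct on the length $n$ of the tower
$$X = X_n \lra X_{n-1} \lra \cdots \lra X_0 = \Spec(\CC)$$
of elementary fibrations witnessing that $X$ is an Artin neighborhood. The base case $n=0$ is trivial since $X_0(\CC)$ is a point. For the inductive step I would first analyze the analytification of a single elementary fibration $f\colon X_i \to X_{i-1}$ with compactification $\bar f\colon \bar X_i \to X_{i-1}$ and complement $Y_i = \bar X_i \setminus X_i$. By Ehresmann's fibration theorem applied to the proper smooth map $\bar f^{an}\colon \bar X_i(\CC) \to X_{i-1}(\CC)$, the latter is a locally trivial smooth fiber bundle. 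Since $Y_i \to X_{i-1}$ is finite \'etale, $Y_i(\CC) \subset \bar X_i(\CC)$ is a finite disjoint union of local sections of this bundle, and a standard local straightening argument shows that the restriction $f^{an}\colon X_i(\CC) \to X_{i-1}(\CC)$ is itself a locally trivial topological fibration. Its typical fiber $F_i$ is a connected smooth projective complex curve minus finitely many points, hence a non-compact Riemann surface that deformation retracts onto a wedge of circles; in particular $F_i$ is a $K(\Phi_i, 1)$ for some finitely generated free group $\Phi_i$.

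For part (i) I would apply the long exact sequence of homotopy groups to the fibration $F_i \hookrightarrow X_i(\CC) \to X_{i-1}(\CC)$. Since both $F_i$ and, by induction, $X_{i-1}(\CC)$ have vanishing homotopy groups in degree $\geq 2$, so does $X_i(\CC)$; connectedness is preserved at each stage because both base and fiber are connected. Iterating $n$ times yields that $X(\CC)$ is a connected topological $K(\pi,1)$.

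For part (ii) the same long exact sequence degenerates to a short exact sequence of fundamental groups
$$1 \lra \Phi_i \lra \pi_1(X_i(\CC)) \lra \pi_1(X_{i-1}(\CC)) \lra 1.$$
The inductive hypothesis supplies goodness of $\pi_1(X_{i-1}(\CC))$, and I would then invoke Serre's stability criterion from \cite[Chapter I, Section 2.6]{SER}: an extension $1 \to N \to G \to Q \to 1$ of good groups with $N$ finitely generated and $H^q(N,M)$ finite for every finite $G$-module $M$ and every $q$ is itself good. Since $\Phi_i$ is a finitely generated free group of cohomological dimension $\leq 1$, these finiteness hypotheses are automatic, so $\pi_1(X_i(\CC))$ is good and the induction closes.

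The main technical point to verify carefully is the local triviality of $f^{an}$ after excising the \'etale locus $Y_i^{an}$; this reduces to combining Ehresmann's theorem for $\bar f^{an}$ with an isotopy argument straightening the finitely many sections cut out by $Y_i \to X_{i-1}$ inside each trivializing chart of the ambient bundle. Once that is granted, both conclusions follow mechanically from the long exact sequence of a fibration together with Serre's goodness criterion for group extensions.
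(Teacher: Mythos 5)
Your argument is essentially the paper's own: the proof in the text cites \cite[Exp.~XI, Variante 4.6]{SGA4} for the fact that an elementary fibration analytifies to a locally trivial fiber bundle whose fiber is a $K(\pi,1)$ with finitely generated free fundamental group, and then runs exactly your long exact sequence of homotopy groups plus Serre's goodness criterion for successive extensions of free groups of finite type. The only difference is that you supply the local triviality directly via Ehresmann and a section-straightening isotopy rather than quoting SGA4, and you should just note that the definition of elementary fibration requires $Y\to S$ to be finite \'etale \emph{and surjective}, which is what guarantees the fibers are non-compact curves.
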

\proof 
This is proven by Serre  \cite[Exp XI,  Variant  4.6]{SGA4} as a variant of the proof of Artin comparison theorem \cite[Exp. XI, Theorem 4.4]{SGA4}.  The result follows from the fact that if $X \ra S$ is an elementary fibration,  then $X(\CC) \ra S(\CC)$ is a locally trivial topological fiber bundle whose fiber $F$ is a topological  $K(\pi, 1)$ space and its fundamental group $ \pi_1(F)$ is free of finite type. The exact sequence of homotopy groups
$$...\ra  \pi_n(F)   \ra \pi_n(X(\CC)) \ra  \pi_n(S(\CC))  \ra    \pi_{n-1}(F)  \ra...$$
implies that $X(\CC)$ is a topological $K(\pi, 1)$ space and  $\pi_1(X(\CC))$  is a succession of extensions of free group of finite type,  whence by \cite[Chapter I,  \S 2.6 2.d]{SER} it is a good group.
\qed

\medskip
\begin{proposition}
Let $X$ be a  connected scheme of finite type over $\CC$. If  X is an Artin neighborhood over $\Spec(\CC)$, then $\Br(X^{an})= \Br'(X^{an})$.
\end{proposition}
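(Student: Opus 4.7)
The plan is to apply Schr\"oer's Theorem \ref{schr} directly to the analytic space $X^{an}$, using Lemma \ref{khtp} to verify the two hypotheses on the topological homotopy groups.

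First I would invoke Lemma \ref{khtp}, which says that under the assumption that $X$ is an Artin neighborhood over $\Spec(\CC)$, the underlying topological space $X(\CC)$ of $X^{an}$ is a topological $K(\pi,1)$ space and its fundamental group $\pi_1(X(\CC))$ is a good group. The goodness condition is one of the two hypotheses of Theorem \ref{schr}, so that half is immediate.

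Second, I would handle the Pontryagin dual condition. Since $X(\CC)$ is a $K(\pi,1)$, all higher homotopy groups vanish; in particular $\pi_2(X^{an}) = \pi_2(X(\CC)) = 0$. Therefore $\Hom(\pi_2(X^{an}), \QQ/\ZZ) = 0$, and \emph{a fortiori} its subgroup of $\pi_1(X^{an})$-invariants is trivial. This verifies the second hypothesis of Theorem \ref{schr}.

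With both hypotheses checked, Theorem \ref{schr} applied to the complex analytic space $X^{an}$ yields $\Br(X^{an}) = \Br'(X^{an})$, which is the desired conclusion. There is essentially no obstacle here beyond assembling the two facts from Lemma \ref{khtp} and plugging them into Theorem \ref{schr}; the real content of the statement is hidden in Lemma \ref{khtp} (via Serre's variant of Artin's comparison theorem) and in Schr\"oer's theorem itself.
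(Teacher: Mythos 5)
Your argument is correct and is essentially identical to the paper's own proof: the paper likewise notes that $X(\CC)$ being a topological $K(\pi,1)$ space forces $\pi_2(X(\CC))=0$, so the invariants of the Pontryagin dual vanish, and then combines Lemma \ref{khtp} (for goodness of $\pi_1(X(\CC))$) with Theorem \ref{schr}. Nothing is missing.
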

\proof 

Since $X(\CC)$ is a topological $K(\pi, 1)$ space,  then in particular   $\pi_2(X(\CC))=0$. Hence the assertion follows from Lemma \ref{khtp} and Theorem \ref{schr}.
\qed

\medskip
\begin{corollary}
Let $X$ be a connected scheme  of finite type over $\CC$. If  X is an  Artin neighborhood over $\Spec(\CC)$  and  $X^{an}$ is compact, then   $\Br(X)= \Br'(X)$.
\end{corollary}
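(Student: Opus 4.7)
The statement is essentially a direct consequence of the two immediately preceding results, so the plan is to simply chain them together. First I would invoke the preceding proposition: since $X$ is a connected scheme of finite type over $\CC$ that is an Artin neighborhood over $\Spec(\CC)$, we already have $\Br(X^{an}) = \Br'(X^{an})$. The point is that by Lemma \ref{khtp} the space $X(\CC)$ is a topological $K(\pi,1)$ with good fundamental group, hence $\pi_2(X(\CC)) = 0$ and Schr\"oer's Theorem \ref{schr} applies.

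Next I would bring in the compactness hypothesis on $X^{an}$ in order to apply Proposition \ref{desbr}. That proposition asserts the equivalence $\Br(X) = \Br'(X) \iff \Br(X^{an}) = \Br'(X^{an})$ precisely under the compactness assumption, by comparing the commutative diagram relating algebraic and analytic Brauer groups and their cohomological counterparts, using the algebraization of Azumaya algebras together with Artin's comparison isomorphism $H^2_{\et}(X,\mu_{n,X}) \simeq H^2(X^{an},\mu_{n,X^{an}})$ via the Kummer sequence. Combining this equivalence with the equality obtained in the first step yields $\Br(X) = \Br'(X)$.

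There is essentially no obstacle here: the corollary is a two-line consequence of the preceding proposition and Proposition \ref{desbr}, and no new ingredient needs to be introduced. The only things to verify are that both hypotheses needed on $X$ and $X^{an}$ are exactly those recalled in the corollary's statement, which they are.
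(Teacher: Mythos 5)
Your proof is correct and is exactly the argument the paper intends: the corollary is stated without proof precisely because it follows by chaining the preceding proposition (which gives $\Br(X^{an})=\Br'(X^{an})$ for Artin neighborhoods via Lemma \ref{khtp} and Theorem \ref{schr}) with Proposition \ref{desbr}, whose compactness hypothesis is supplied by assumption. No gaps; your identification of where each hypothesis is used matches the paper.
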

\begin{corollary}\label{propb}
Let $X$ be a  proper connected  scheme of finite type over $\CC$. If  X is an  Artin neighborhood over $\Spec(\CC)$,  then   $\Br(X)= \Br'(X)$.
\end{corollary}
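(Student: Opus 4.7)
The plan is to reduce the statement immediately to the preceding corollary, so the only substantive point to verify is that properness of $X$ over $\Spec(\CC)$ implies that the associated complex analytic space $X^{an}$ is compact. Once this is in hand, all the hypotheses of the preceding corollary are met and we are done.

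For the compactness claim, I would appeal to the standard fact comparing algebraic and analytic properness: a scheme of finite type over $\CC$ is proper over $\Spec(\CC)$ if and only if $X^{an}$ is a compact (Hausdorff) complex analytic space. This is classical and appears, for instance, in Serre's GAGA or in SGA~1, and follows from the valuative criterion together with the fact that the structure morphism $X^{an}\to\{\mathrm{pt}\}$ inherits properness in the analytic sense. So under our hypothesis that $X$ is proper, connected, and of finite type over $\CC$, we automatically obtain that $X^{an}$ is compact and connected.

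Combining this with the assumption that $X$ is an Artin neighborhood over $\Spec(\CC)$, we are in the situation of the previous corollary: $X^{an}$ is compact, and by Lemma~\ref{khtp} the space $X(\CC)=X^{an}$ is a topological $K(\pi,1)$ with good topological fundamental group, in particular $\pi_{2}(X^{an})=0$. Schröer's Theorem~\ref{schr} then yields $\Br(X^{an}) = \Br'(X^{an})$, and Proposition~\ref{desbr}, whose applicability requires precisely the compactness of $X^{an}$, transfers this equality back to the algebraic side to give $\Br(X)=\Br'(X)$.

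There is really no obstacle here; the corollary is a cosmetic strengthening of its predecessor obtained by replacing the ad hoc compactness hypothesis on $X^{an}$ with the more intrinsic algebraic condition of properness of $X$. If anything, the only point deserving care is to confirm that the comparison results used in Proposition~\ref{desbr} (Artin's comparison theorem for $\mu_{n}$-cohomology and the analytification of Azumaya algebras) apply in the generality of a proper complex variety, which is exactly the setting in which they were originally proved.
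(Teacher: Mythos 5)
Your proof is correct and is essentially identical to the paper's: the paper's entire argument is the one-line observation that properness of $X$ over $\CC$ implies $X^{an}$ is compact, after which the preceding corollary applies. Your additional remarks spelling out the chain through Lemma~\ref{khtp}, Theorem~\ref{schr}, and Proposition~\ref{desbr} simply unwind what that corollary already packages.
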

\proof
Since $X$ is a proper scheme of finite type over $\CC$, then $X^{an}$ is compact.
\qed

\medskip
\begin{corollary}
Let $X$ be a smooth  proper connected scheme of finite type over $\CC$. There is an open $ U \subseteq X$ such that   $\Br(U)= \Br'(U)$.
\end{corollary}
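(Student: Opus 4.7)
The plan is to reduce to the algebraic form of Schröer's theorem by producing, inside $X$, an open subset that is an Artin neighborhood. First I would invoke Artin's theorem \cite[Exp XI, Proposition 3.3]{SGA4}: since $X$ is smooth over the algebraically closed field $\CC$ of characteristic zero, every point of $X$ admits a Zariski open neighborhood which is an Artin neighborhood over $\Spec(\CC)$. Picking any such neighborhood $U \subseteq X$ (passing to a connected component of $U$ if necessary) gives an open $U$ that is connected, smooth, of finite type over $\CC$, and an Artin neighborhood.

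Next I would apply the main algebraic Theorem \ref{rcnb} to $U$. Its hypotheses are all satisfied: $U$ is regular because it is smooth over a field, it is connected of finite type over $\CC$, and since Artin neighborhoods are algebraic $K(\pi,1)$ spaces one has $\pi_n^{\et}(U,\bar x)=0$ for all $n \geq 2$, in particular $\pi_2^{\et}(U,\bar x)=0$. Theorem \ref{rcnb} then yields $\Br(U)=\Br'(U)$, proving the corollary.

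The main obstacle, and the reason one cannot simply invoke Corollary \ref{propb}, is that an Artin neighborhood is almost never proper, so the associated analytic space $U^{an}$ is not compact. Consequently, Proposition \ref{desbr} does not transfer the analytic statement $\Br(U^{an})=\Br'(U^{an})$ (which would follow from Theorem \ref{schr} via Lemma \ref{khtp}) to the algebraic equality $\Br(U)=\Br'(U)$. This gap is exactly what Theorem \ref{rcnb} bridges: it provides a purely algebraic route via Galois--Grothendieck theory and the vanishing of $\pi_2^{\et}$, bypassing any compactness requirement. Once Theorem \ref{rcnb} is granted (its proof being carried out independently in Section \ref{S3}), the present corollary reduces to a formal application of Artin's covering result.
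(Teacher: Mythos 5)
Your proof is correct, and it is in substance the argument the paper itself ultimately relies on. The paper's own justification of this corollary is the single line ``by Artin's theorem, $X$ admits a cover by Artin neighborhoods,'' placed immediately after Corollary \ref{propb}; read literally within Section \ref{S2}, that is incomplete for exactly the reason you identify: the results of that section (Proposition \ref{desbr}, Corollary \ref{propb}) need $U^{an}$ compact or $U$ proper, and an Artin neighborhood is essentially never proper. The missing step is supplied only later in the paper, as Corollary \ref{artinc}, via Proposition \ref{artkp} (Artin neighborhoods over $\Spec(\CC)$ are algebraic $K(\pi,1)$ spaces), Proposition \ref{kphom} (which converts this, for the smooth hence geometrically unibranch $U$, into $\pi_n^{\et}(U,\bar x)=0$ for $n\geq 2$), and finally Theorem \ref{rcnb}. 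That chain is precisely what you wrote, so your proposal is the complete version of the paper's intended argument rather than a divergent one; the only caveat is that it depends on Proposition \ref{artkp} and Theorem \ref{rcnb}, which appear after the corollary in the text, so as stated in Section \ref{S2} the corollary is really a forward reference. (Your parenthetical about passing to a connected component is harmless but unnecessary: an Artin neighborhood over $\Spec(\CC)$ is connected by construction, its elementary fibrations having irreducible geometric fibers.)
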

\proof
By  Artin theorem \cite[Exp XI,  Proposition  3.3]{SGA4},  $X$ admits  a cover by Artin neighborhoods.
\qed

\medskip
The purpose of the next sections is to apply these results  to study the case of  proper and smooth schemes over subfields of $\CC$. This involves the algebraic version of $K(\pi,1)$ spaces which is closely related to the \'etale homotopy type.

\section{\'Etale homotopy type and  $K(\pi, 1)$ spaces}\label{S3}

We begin this section by a brief summary of  Artin and Mazur  construction of  \'etale homotopy type and \'etale homotopy groups. The standard reference for this is \cite{AM}.

Let $X$ be a locally noetherian connected scheme,  and denote by $Cov(X)$ the category of \'etale covers of  $X$,  and by $Hyp(X)$ the category of \'etale hypercoverings of $X$. Any object  $\mathcal U_ \bullet$ of  $Hyp(X)$  is  a simplicial object of $Cov(X)$ \cite[Definition 8.4]{AM}. Since every object $Y  \rightarrow X$ in $Cov(X)$ is a disjoint union of connected schemes,  we can consider the functor $\pi_0: Hyp(X) \rightarrow Sets$,  where $\pi_0(Y)$ is the set of connected components of $Y$. It extends to  a functor $\pi_0: SHyp(X) \rightarrow SSets$ from the category of simplicial \'etale hypercoverings of $X$  to the category of simplicial sets,  and by taking the quotient with simplicial homotopy we get a functor
$
 \{\pi_0(-)\}:   Ho(SHyp(X)) \rightarrow Ho(SSets)
$
of  homotopy categories. Since $ Ho(SHyp(X)) $ is cofiltering \cite[Corollary 8.13.(i)]{AM},  then one can define  the \'etale homotopy type $Et$  as an object in pro-Top
$$
Et:  Ho(SHyp(X)) \lra \rm Top  
$$
in the following sense: Take an hypercovering $\mathcal U_ \bullet$ of $X$,  and put $ \pi_0(X):=  \{\pi_0(\mathcal U_ \bullet)\} $. Then one defines $Et(X):= |\pi_0(X)|$,  where $|S|$ is the topological realization of the simplicial set $S$. Such a topological space can be given the structure of a CW-complex,  hence $Et(X)$ is an object in pro-$\mathcal H$,  the pro-category of the homotopy category of CW-complexes.

For any abelian group $A$ we have a canonical isomorphism \cite[Corollary 9.3]{AM}
$$
H^n( Et(X),  A)  = H_{\et}^n(X, A)
$$
A given   geometric point $\bar x$  of $X$ defines a point $ \bar x_{\et}$ on $Et(X)$,  hence one can define the \'etale homotopy groups for all $n \geq 0$:
$$
\pi_n^\et(X, \bar{x}):= \pi_n(Et(X),  \bar x_{\et})
$$
In particular by \cite[Corollary 10.7]{AM} $\pi_1^\et(X, \bar{x})$ is the usual \'etale fundamental group defined by Grothendieck in \cite{SGA1}.

\begin{remark}
Let $\shF$ be a locally constant constructible $n$-torsion \'etale sheaf on $X$ for some  integer $n$,  it can be written as follows
$$\shF  = \bigoplus_{i=1}^r (\ZZ/n^{p_i}\ZZ)^{m_i}$$
 where $p_i$ and $m_i$ are positive integers. Hence we have a natural identification for all $q \geq 0$
 $$H^q( Et(X),  \shF ) : =   \bigoplus_{i=1}^r H^q( Et(X),  (\ZZ/n^{p_i}\ZZ)^{m_i}) =   \bigoplus_{i=1}^r H_{\et}^q(X,  (\ZZ/n^{p_i}\ZZ)^{m_i})$$
\end{remark}
\begin{lemma}
\mylabel{homotop}

let  $f:  (Y,  \bar y) \lra (X,  \bar x)  $ be a finite \'etale surjective morphism of pointed connected schemes,  then
$$
\pi_n^\et(Y, \bar y)    \simeq   \pi_n^\et(X, \bar x) 
$$
for all $n \geq 2$.
\end{lemma}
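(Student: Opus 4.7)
The plan is to imitate the classical topological fact that a covering map induces isomorphisms on $\pi_n$ for $n\ge 2$, with finite \'etale morphisms playing the role of covering spaces and Artin--Masur's pro-homotopy machinery replacing ordinary covering space theory.

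First I would reduce to the Galois case. Let $g\colon(Z,\bar z)\to(X,\bar x)$ be the Galois closure of $f$: this is a finite Galois \'etale cover with group $G$, and by construction it factors through $Y$, so that the intermediate morphism $h\colon(Z,\bar z)\to(Y,\bar y)$ is itself a finite Galois \'etale cover with some subgroup $H\le G$ as its Galois group. If the lemma holds for any finite Galois \'etale cover of connected pointed schemes, then applying it to both $g$ and $h$ and composing the two isomorphisms gives the statement for $f$.

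Next, suppose $g\colon(Z,\bar z)\to(X,\bar x)$ is Galois with finite group $G$. The key input from Artin--Masur is the existence of a fibration of pro-simplicial sets
\[Et(Z)\lra Et(X)\lra BG,\]
obtained as follows: the hypercoverings $\mathcal{U}_\bullet$ of $X$ on which $g$ splits --- that is, for which each $\mathcal{U}_n\times_X Z$ is a trivial $G$-torsor over $\mathcal{U}_n$ --- form a cofinal subsystem of $Hyp(X)$, and for each such $\mathcal{U}_\bullet$ the natural map $\pi_0(\mathcal{U}_\bullet\times_X Z)\to\pi_0(\mathcal{U}_\bullet)$ is a $G$-principal covering of simplicial sets, equipped with a canonical classifying map to $BG$. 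Taking geometric realizations and passing to the pro-limit yields the claimed fibration.

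Since $G$ is a (discrete) finite group, $\pi_n(BG)=0$ for every $n\ne 1$, and the long exact sequence of pro-homotopy groups
\[\cdots\lra\pi_{n+1}(BG)\lra\pi_n^\et(Z,\bar z)\lra\pi_n^\et(X,\bar x)\lra\pi_n(BG)\lra\cdots\]
collapses at once to the desired isomorphism $\pi_n^\et(Z,\bar z)\iso\pi_n^\et(X,\bar x)$ for all $n\ge 2$. The main obstacle is the construction of the fibration sequence above: the cofinality of the split hypercoverings and the coherence of the classifying maps in the pro-homotopy category are the technical points requiring Artin--Masur's machinery, while the rest is a purely formal consequence of the long exact sequence.
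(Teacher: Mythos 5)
Your proposal is correct in outline, but it is a genuinely different route from the paper's, because the paper does not actually prove this lemma: it merely cites \cite[Proposition 4.1]{PAL} for smooth quasi-projective varieties over an algebraically closed field and \cite[Lemma 2.1]{SCHM} for arbitrary connected schemes. Your argument --- pass to the Galois closure $g:(Z,\bar z)\ra (X,\bar x)$ with group $G$, realize $Et(Z)\ra Et(X)\ra BG$ levelwise as a principal $G$-covering of simplicial sets over the system of hypercoverings of $X$ splitting $g$, and read off the isomorphism on $\pi_n^\et$ for $n\geq 2$ from the long exact sequence since $BG$ is a $K(G,1)$ --- is essentially the standard proof underlying those citations, so it is an acceptable self-contained substitute. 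Two remarks. First, the Galois reduction is harmless but not needed: for an arbitrary finite \'etale surjective $f$, the map $\pi_0(\mathcal{U}_\bullet\times_X Y)\ra\pi_0(\mathcal{U}_\bullet)$ is already a finite (not necessarily principal) covering of simplicial sets, and realizations of coverings of CW complexes induce isomorphisms on $\pi_n$ for $n\geq 2$ directly; the detour through $BG$ only buys you the classifying-map description of the fibration. Second, the point you dismiss as ``technical'' is really the entire content of the lemma: one must identify $Et(Y)$ with the pro-system $\{|\pi_0(\mathcal{U}_\bullet\times_X Y)|\}$ indexed by hypercoverings of $X$, i.e.\ show that base-changed hypercoverings are cofinal in $Hyp(Y)$. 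This is not automatic --- a hypercovering of $Y$ is in general not a hypercovering of $X$, since the coskeleton conditions are taken over different bases --- and it is precisely what the machinery of \cite{AM} (\S 10) and \cite{EMF} provides; your write-up should invoke that input explicitly rather than leaving it as an aside.
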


\proof
For   smooth connected  quasi-projective varieties  over an algebraically closed field $k$,  this is \cite[Proposition 4.1]{PAL}.
For arbitrary connected schemes,   the assertion   follows from \cite[Lemma 2.1]{SCHM}.
\qed

\medskip
The following result is a generalization of Theorem \ref{schr}. Since $ \pi_1^\et(X, \bar{x})$ is always profinite, 
we  use properties of continuous cohomology of profinite groups,  and hence we can omit   the goodness assumption. Furthermore,   Lemma  \ref{homotop} will serve  to get  the desired  \'etale Galois cover which kills cohomological Brauer classes.
\begin{theorem}
\mylabel{rcnb}
Let $X$ be a regular  connected  scheme of finite type over a field k of characteristic 0, with a geometric base point $\bar x \ra X$,   such that $ \pi_2^\et(X, \bar{x})=0$. Then $\Br(X) = \Br'(X)$.
\end{theorem}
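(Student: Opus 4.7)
The approach is an algebraic analog of Schröer's argument (Theorem \ref{schr}), with the topological cohomology of $\pi_1(X)$ replaced by the continuous cohomology of the profinite \'etale fundamental group $\pi_1^{\et}(X,\bar x)$. Schröer's goodness hypothesis is not needed because profiniteness of $\pi_1^{\et}$ is automatic.

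\textbf{Step 1 (Reduction).} Fix $\beta \in \Br'(X)$. By Theorem \ref{groth}(iv) and the remark following Lemma \ref{galois}, there exist an integer $n$ and a lift $\alpha \in H^2_{\et}(X,\mu_{n,X})$ of $\beta$ under the Kummer boundary map. Since $X$ is regular, Lemma \ref{galois} reduces the theorem to exhibiting a finite Galois \'etale cover $g\colon Y \to X$ with $g^{*}\alpha = 0$ in $H^2_{\et}(Y,\mu_{n,Y})$; chasing through the Kummer sequence on $Y$ then yields $g^{*}\beta = 0$ in $H^2_{\et}(Y,\GG_{m,Y})$, as required.

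\textbf{Step 2 (Comparison isomorphism).} I will show that under the hypothesis $\pi_2^{\et}(X,\bar x)=0$ the natural comparison map
$$H^2_{\mathrm{cts}}\bigl(\pi_1^{\et}(X,\bar x),\mu_n\bigr) \lra H^2_{\et}(X,\mu_{n,X})$$
is an isomorphism. Let $\widehat X$ be the Artin-Mazur pro-universal \'etale cover $\varprojlim X_i$ indexed by pointed finite Galois \'etale covers of $(X,\bar x)$; its pro-homotopy groups satisfy $\pi_1^{\et}(\widehat X)=1$ and $\pi_2^{\et}(\widehat X)\cong \pi_2^{\et}(X,\bar x)=0$, so $\widehat X$ is 2-connected in the pro-homotopy sense. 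By the pro-Hurewicz theorem applied to the Artin-Mazur \'etale homotopy type, $H^q(\widehat X,\mu_n) = \varinjlim_i H^q_{\et}(X_i,\mu_n) = 0$ for $q=1,2$. The Cartan-Leray / Hochschild-Serre spectral sequence
$$E_2^{p,q} = H^p_{\mathrm{cts}}\bigl(\pi_1^{\et}(X,\bar x),H^q(\widehat X,\mu_n)\bigr) \Longrightarrow H^{p+q}_{\et}(X,\mu_{n,X})$$
then has vanishing columns $q=1,2$ in the relevant range, forcing the edge map $E_2^{2,0}\to H^2_{\et}(X,\mu_{n,X})$ to be the claimed isomorphism.

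\textbf{Step 3 (Killing $\alpha$).} Set $G=\pi_1^{\et}(X,\bar x)$, which is profinite. For a finite discrete $G$-module $M$,
$$H^2_{\mathrm{cts}}(G,M) = \varinjlim_{U} H^2(G/U,M^U),$$
where $U$ ranges over open normal subgroups. Hence $\alpha$ is the inflation of some class in $H^2(G/U,\mu_n^{U})$ for a suitable $U$. Since inflation followed by restriction vanishes (five-term inflation-restriction sequence), $\alpha|_U=0$. By Galois-Grothendieck theory, $U$ corresponds to a pointed finite Galois \'etale cover $g\colon (Y,\bar y)\to (X,\bar x)$, and under the comparison isomorphism of Step 2 this precisely says $g^{*}\alpha = 0$ in $H^2_{\et}(Y,\mu_{n,Y})$. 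Lemma \ref{galois} then concludes the proof.

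\textbf{Main obstacle.} The crux is the pro-Hurewicz step together with the convergence of the Cartan-Leray spectral sequence in the Artin-Mazur pro-setting, i.e.\ making rigorous the slogan ``$\pi_2^{\et}(X,\bar x)=0$ implies $H^2_{\mathrm{cts}}(\pi_1^{\et}(X,\bar x),M)\xrightarrow{\sim} H^2_{\et}(X,M)$ for finite discrete modules''; everything else is a standard descent argument. The characteristic-zero hypothesis enters only to ensure that every integer $n$ is invertible on $X$, so that $\mu_{n,X}$ is locally constant constructible and the Kummer sequence is exact on the \'etale site; regularity is used to apply Lemma \ref{galois} rather than the more restrictive Lemma \ref{etale}.
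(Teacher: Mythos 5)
Your proposal is correct and follows essentially the same route as the paper: the Cartan--Leray/Hochschild--Serre spectral sequence for the universal cover, Hurewicz plus universal coefficients to exploit $\pi_2^{\et}(X,\bar x)=0$, the colimit description $H^2(\pi_1^{\et}(X,\bar x),M)=\varinjlim_N H^2(\pi_1^{\et}(X,\bar x)/N,M^N)$ combined with the Galois correspondence and the vanishing of restriction-after-inflation to produce a finite Galois cover killing the class, and finally Lemma \ref{galois}. The only (cosmetic) difference is that you run the spectral sequence over the scheme-level pro-universal cover $\widehat X$ while the paper uses the universal cover of the pro-space $Et(X)$, and the paper additionally invokes its Lemma \ref{homotop} on the cover $Y$ to identify $H^2_{\et}(Y,\mu_{n,Y})$ with $H^2(N,(\mu_{n,Y})_{\bar y})$, a point your Step 3 handles by compatibility of the comparison maps.
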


\proof
Let  $p: (Et(X)^\thicksim, \tilde{x}_\et) \ra (Et(X),\bar x_{\et})$   be the  universal cover of the \'etale homotopy type $Et(X)$. For any locally constant constructible torsion \'etale  sheaf $\mathcal{F}$ on $X$, we have a spectral sequence
$$
E^{p,q}_2=H^p(\pi_1^\et(X, \bar x), H^q(Et(X)^\thicksim, p^{*}\shF)) \Ra H_{\et}^{p+q}(X, \shF)
$$
This is in fact a Grothendieck spectral sequence associated to the functor
$$
\Gamma(Et(X)^\thicksim,  p^{*}(-)): Sch(Et(X))  \lra \pi_1^\et(X, \bar x)\rm{-Mod} 
$$
from the category of sheaves  on $Et(X)$ to the category of $\pi_1^\et(X, \bar x)$-modules,  and the functor
$$
(-)^{\pi_1^\et(X, \bar x)}: \pi_1^\et(X, \bar x)\rm{-Mod} \lra Ab
$$
from the category of $\pi_1^\et(X, \bar x)$-modules to the category of abelian groups.
Therefore,  we get an exact sequence of low-degree terms
\begin{align}\label{h12}
0 \ra  H^1(\pi_1^\et(X, \bar{x}),   H^0( Et(X)^\thicksim,  p^{*}\mathcal{F}) )      \ra   H_{\et}^1(X, \mathcal{F})  & \ra   H^0(\pi_1^\et(X, \bar{x}),   H^1(Et(X)^\thicksim,  p^{*}\mathcal{F}))   \\
 \ra  H^2(\pi_1^\et(X, \bar{x}),   H^0( Et(X)^\thicksim,  p^{*}\mathcal{F}) )     \ra  H_{\et}^2(X, \mathcal{F}) & \ra   H^0(\pi_1^\et(X, \bar{x}),   H^2( Et(X)^\thicksim,  p^{*}\mathcal{F})) \notag
\end{align}
We have $H^0( Et(X)^\thicksim,  p^{*}\mathcal{F}) =  \mathcal{F}_{\bar x}$ and $H^1(Et(X)^\thicksim,  p^{*}\mathcal{F})  =0$.
By the topological Hurewicz theorem we  get an isomorphism 
$$
 H_2(  Et(X)^\thicksim,  \ZZ)      \simeq          \pi_2 (Et(X)^\thicksim, \tilde{x}_\et)    \simeq   \pi_2 ( Et(X), \bar{x}_\et) = \pi_2^\et(X, \bar{x})  
$$
And  by the universal coefficient theorem we have 
$$
 H^2( Et(X)^\thicksim,  p^{*}\mathcal{F})  \simeq  \Hom(  H_2(  Et(X)^\thicksim,  \ZZ),  \mathcal{F}_{\bar x})
$$
Hence  we get a short exact sequence
$$
0\lra  H^2(\pi_1^\et(X, \bar{x}),   \mathcal{F}_{\bar x})    \lra  H_{\et}^2(X, \mathcal{F})   \lra \Hom( \pi_2^\et(X, \bar{x}),  \mathcal{F}_{\bar x})^{\pi_1^\et(X, \bar{x})} 
$$
By  assumption on $\pi_2^\et(X, \bar{x})$,   and  in light of  \cite[Chapter I, \S 2.2,  Corollary 1]{SER}  we have an isomorphism
\begin{align}
H_{\et}^2(X,  \mathcal{F}) & \simeq   H^2(\pi_1^\et(X, \bar{x}), \mathcal{F}_{\bar x}) \\
& \simeq \varinjlim H^2(\pi_1^\et(X, \bar{x})/ N,  \mathcal{F}_{\bar x}^N) \notag
\end{align}
where the limit runs over all normal open subgroups $N$ of $\pi_1^\et(X, \bar{x})$,  and 
 $\mathcal{F}_{\bar x}^N$ is the submodule of $N$-invariant elements.
Next,  take $\mathcal{F }=\mu_{n, X}$ for some integer $n$, and choose a class $\beta \in H_{\et}^2(X,  \mu_{n, X})$,  it belongs  to a group $H^2(\pi_1^\et(X, \bar{x})/ N,   (\mu_{n, X})^N_{\bar x})$ for some open normal subgroup $N$. Further, $N$ is of  finite index since it is an open normal subgroup of a profinite group,  thus $G:= \pi_1^\et(X, \bar{x})/ N$ is a finite quotient of  $\pi_1^\et(X, \bar{x})$. Therefore, the fundamental Galois correspondence implies that there exists a pointed \'etale Galois cover $f: (Y,  \bar y)  \rightarrow (X,  \bar x)$ with Galois group $G$ and $\pi_1^\et(Y, \bar{y}) = N $. On the other hand, Lemma \ref{homotop} asserts that $\pi_2^\et(Y, \bar{y}) = 0$,  hence we get by the same argument an isomorphism
$$
H_{\et}^2(Y,  \mu_{n, Y})  \simeq   H^2(N,   (\mu_{n, Y})_{\bar y} ) 
$$
Since the map 
$$   H^2(\pi_1^\et(X, \bar{x})/ N,  (\mu_{n, X})^N_{\bar x})   \lra      H^2(N,   (\mu_{n, X})_{\bar x} ) =     H^2(N,  (\mu_{n, Y})_{\bar y} )   $$
 is zero,   we conclude that  the image of $\beta$ under the map 
 $$f^{*}:  H_{\et}^2(X,  \mu_{n, X} ) \lra  H_{\et}^2(Y,  \mu_{n, Y})$$
is zero,  hence it follows from  Lemma \ref{galois} that $\Br(X) = \Br'(X)$.
\qed

\medskip
Following Achinger \cite{ACH1} and \cite{ACH2}, we consider the notion of algebraic $K(\pi,  1)$ spaces, which is defined only for   coherent schemes  that have finitely many components. By coherent we mean quasi-compact and quasi-separated scheme. In our context, we consider  connected noetherian schemes which belongs to this class. Further,  we adopt the second definition introduced in \cite{ACH2} which does not require sheaves of order invertible on $X$. Note that algebraic $K(\pi,  1)$ spaces are defined in \cite[2.3]{SCHM} in terms of \'etale homotopy groups. The two definitions are equivalent in the case of geometrically unibranch schemes (Proposition \ref{kphom}).

Let $X$ be a noetherian scheme,  and denote by $X_{\et}$ (resp. $ X_{\fet}$) the \'etale site (resp. the finite \'etale site) of $X$. The forgetful functor from the category of finite \'etale  covers of $X$ to the category of \'etale covers induces   a natural morphism of sites
$$
\rho: X_{\et}  \lra     X_{\fet}
$$
If $X$ is connected,  then for a given geometric point $\bar{x} \rightarrow X$,  the site  $ X_{\fet}$  is equivalent to the classifying site $B\pi_1^\et(X, \bar{x})$ whose underlying category  is the category of continuous $\pi_1^\et(X, \bar{x})$-sets. For every locally constant torsion \'etale sheaf $\shF$ on $X$ and $q \geq 0$,  we have then a natural morphism
$$
\rho^q: H^q( \pi_1^\et(X, \bar{x}), \mathcal{F}_{\bar x} )  \simeq H^q_{\fet}(X,  \rho_{*}\mathcal{F})       \lra   H_{\et}^q(X,  \mathcal{F})
$$
 \begin{definition}\label{defkp}(\cite{ACH1}, \cite{ACH2})
A pointed connected noetherian  scheme  $(X, \bar x)$ is an algebraic $K(\pi,  1)$ space if for every  locally constant constructible torsion \'etale sheaf $\mathcal{F}$
on $X$, the natural morphisms
$$
\rho^q:  H^q( \pi_1^\et(X, \bar{x}), \mathcal{F}_{\bar x} )       \lra   H_{\et}^q(X,  \mathcal{F})
$$
are isomorphisms for all  $q \geq 0$.

 \end{definition}

\begin{example}\label{expkp} The following schemes are examples of algebraic  $K(\pi,  1)$ spaces:
\begin{itemize}
\item The spectrum of a field $\Spec(k)$.
\item Smooth connected curves $C$ of genius $g > 0$ \cite{SCHM}.
\item Abelian varieties ( see proof of Theorem \ref{abel}).
\item Finite product of geometrically connected and geometrically unibranch $K(\pi, 1)$ varieties over a field $k$ of characteristic zero \cite{SCHM}.
\item Connected affine $\FF_p$-schemes \cite{ACH2}.
\end{itemize}

\end{example}

Recall that a scheme $X$  is geometrically unibranch if  for every $x \in X$ the local ring $\shO_{X, x}$ is geometrically unibranch (\cite[6.15.1]{EGA}). In particular any normal scheme is  geometrically unibranch (\cite[Proposition 6.15.6]{EGA}).
\begin{proposition}\cite[Proposition 4.4]{ACH2} 
\label{kphom}
 Let $(X, \bar x)$  be a pointed  noetherian,  geometrically unibranch  connected  scheme.  Then $X$  is an algebraic $K(\pi,  1)$ space  if only if  $\pi_n^\et(X, \bar{x}) =0$ for all $n \geq 2$.
\end{proposition}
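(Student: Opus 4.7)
The plan is to recycle the Cartan--Leray/Hochschild--Serre type spectral sequence already exploited in the proof of Theorem~\ref{rcnb}, combined with the topological Hurewicz theorem and universal coefficients. The role of the geometrically unibranch hypothesis is to force profinite and honest \'etale homotopy groups to coincide.

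\textbf{Set-up.} Let $p : (Et(X)^{\sim}, \tilde x_{\et}) \ra (Et(X), \bar x_{\et})$ be the (pro-)universal cover of the \'etale homotopy type. For every locally constant constructible torsion \'etale sheaf $\shF$ on $X$, the spectral sequence
$$E_2^{s,t} \;=\; H^s\bigl(\pi_1^{\et}(X, \bar x),\, H^t(Et(X)^{\sim}, p^{*}\shF)\bigr) \;\Longrightarrow\; H^{s+t}(Et(X), \shF) \;=\; H_{\et}^{s+t}(X, \shF)$$
is available (the last identification is \cite[Corollary~9.3]{AM}). By construction, the comparison map $\rho^q$ of Definition~\ref{defkp} coincides with the edge homomorphism $E_2^{q,0} \ra H^q_{\et}(X, \shF)$.

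\textbf{Forward direction.} Assume $X$ is algebraic $K(\pi,1)$. Then $\rho^q$ is an isomorphism for every $q$ and every such $\shF$, which, reading off the spectral sequence, is equivalent to $H^t(Et(X)^{\sim}, p^{*}\shF) = 0$ for all $t \geq 1$. Since $Et(X)^{\sim}$ is pro-simply-connected, applying topological Hurewicz inductively and the universal coefficient theorem with $\shF_{\bar x}$ ranging over all finite cyclic groups $\ZZ/n\ZZ$ forces the profinite completions of $\pi_n(Et(X)^{\sim}, \tilde x_{\et})$ to vanish for all $n \geq 2$. For $X$ noetherian geometrically unibranch, the higher \'etale homotopy groups $\pi_n^{\et}(X, \bar x)$ are themselves profinite (Artin--Mazur, in the form exploited by Achinger), so they coincide with these profinite completions; hence $\pi_n^{\et}(X, \bar x) = 0$ for $n \geq 2$.

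\textbf{Converse and main obstacle.} Conversely, if $\pi_n^{\et}(X, \bar x) = \pi_n(Et(X)^{\sim}, \tilde x_{\et}) = 0$ for every $n \geq 2$, then $Et(X)^{\sim}$ is pro-finitely weakly contractible; Hurewicz plus universal coefficients again yield $H^t(Et(X)^{\sim}, p^{*}\shF) = 0$ for $t \geq 1$ and every locally constant constructible torsion $\shF$. The spectral sequence collapses onto its base row, so $\rho^q$ is an isomorphism in every degree and $X$ is $K(\pi,1)$. The genuine difficulty is not in manipulating the spectral sequence but in controlling the interface between ordinary homotopy and profinite completion: Hurewicz--universal coefficients only detect pro-finitely completed homotopy, while the statement is about genuine $\pi_n^{\et}$. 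The geometrically unibranch hypothesis is exactly what bridges this gap, ensuring that finite \'etale covers are cofinal enough for the relevant torsion local systems and that the \'etale homotopy pro-object of $X$ is already pro-finite. Without this assumption one obtains only a completed variant of the equivalence.
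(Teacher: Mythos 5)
The paper does not actually prove this proposition; it is quoted from Achinger \cite[Proposition 4.4]{ACH2}, so there is no in-paper argument to compare against, and I am judging your proposal on its own. Your converse direction is essentially sound: it is the same Hurewicz-plus-universal-coefficients computation the author performs inside the proof of Theorem \ref{rcnb}, and you correctly locate the role of the geometrically unibranch hypothesis (it makes $Et(X)$ profinite, so the groups $\pi_n^\et(X,\bar x)$ are pro-finite and are detected by $\Hom(-,\ZZ/n\ZZ)$).

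The gap is in the forward direction, at the sentence claiming that ``$\rho^q$ is an isomorphism for every $q$ and every such $\shF$'' is, ``reading off the spectral sequence,'' equivalent to $H^t(Et(X)^\thicksim,p^{*}\shF)=0$ for all $t\ge 1$. The spectral sequence alone does not give this. From the five-term exact sequence, bijectivity of $\rho^1$ and injectivity of $\rho^2$ only yield $H^0\bigl(\pi_1^\et(X,\bar x),H^1(Et(X)^\thicksim,p^{*}\shF)\bigr)=0$, i.e.\ vanishing of the $\pi_1$-invariants, not of the module itself, and the situation in higher degrees is worse. To close this you must vary the coefficient sheaf: for a finite \'etale cover $f\colon Y\to X$ corresponding to an open subgroup $U\le\pi_1^\et(X,\bar x)$, the sheaf $f_{*}f^{*}\shF$ is again locally constant constructible torsion, $p^{*}(f_{*}f^{*}\shF)$ is the module induced from $U$, and Shapiro's lemma converts the vanishing of invariants for $f_{*}f^{*}\shF$ into $H^1(Et(X)^\thicksim,p^{*}\shF)^U=0$; letting $U$ shrink and using that $H^1(Et(X)^\thicksim,p^{*}\shF)=\varinjlim H^1(X_i,f_i^{*}\shF)$ is a smooth $\pi_1^\et(X,\bar x)$-module then gives the actual vanishing, after which one inducts on $t$. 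Alternatively, you could route the forward direction through Proposition \ref{kpcover}: the $K(\pi,1)$ property means every class in $H^q_{\et}(X,\shF)$ with $q\ge 1$ dies on a finite \'etale cover, which yields $\varinjlim H^q(X_i,f_i^{*}\shF)=H^q(Et(X)^\thicksim,p^{*}\shF)=0$ directly. As written, the key equivalence in the forward implication is asserted rather than proved.
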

\begin{proposition}\cite[Proposition 3.2]{ACH1} 
\label{kpcover}
Let $X$ be a connected noetherian scheme. The following statements  are equivalent 
\begin{itemize}
\item [(i)] $X$ is an algebraic $K(\pi,  1)$ space.
\item [(ii)] For every locally constant constructible torsion \'etale sheaf  $\mathcal{F}$ on $X$,  and every class $\beta \in  H_{\et}^q(X,  \mathcal{F})$ with $q \geq 1$,  there exists a finite \'etale   cover $f: Y \rightarrow X$ such that $f^{*}(\beta) = 0$ in  $H_{\et}^q(Y,  f^*\shF)$.
\end{itemize}
\end{proposition}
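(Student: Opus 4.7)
The plan is to reformulate both conditions in terms of the Leray spectral sequence for the morphism of sites $\rho\colon X_{\et} \to X_{\fet}$, namely $E_2^{p,q} = H^p_{\fet}(X, R^q\rho_*\mathcal{F}) \Ra H^{p+q}_{\et}(X, \mathcal{F})$. Combined with the identification $H^p_{\fet}(X, \rho_*\mathcal{F}) \cong H^p(\pi_1^\et(X, \bar x), \mathcal{F}_{\bar x})$, which makes the edge maps coincide with the natural morphisms $\rho^p$ of Definition \ref{defkp}, this spectral sequence shows that condition (i) is equivalent to the vanishing $R^q\rho_*\mathcal{F} = 0$ for every locally constant constructible torsion sheaf $\mathcal{F}$ on $X$ and every $q \geq 1$.

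The crucial input is then the computation of the stalks of these higher direct images: at a geometric point $\bar x$ one has
$$(R^q\rho_*\mathcal{F})_{\bar x} \;=\; \varinjlim_{(Y, \bar y) \to (X, \bar x)} H^q_{\et}(Y, f^*\mathcal{F})$$
with the colimit taken over pointed finite \'etale covers. Granting this, the direction (i) $\Rightarrow$ (ii) is immediate: vanishing of the stalk means that any $\beta \in H^q_{\et}(X, \mathcal{F})$, viewed in the colimit via the trivial cover $X \to X$, becomes zero after passing to a sufficiently fine pointed finite \'etale cover.

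For the converse (ii) $\Rightarrow$ (i) one must upgrade (ii) from classes on $X$ to classes on arbitrary finite \'etale covers $Y$ of $X$, so that the stalk colimits above vanish. Here I would invoke the well-known vanishing $R^p f_* = 0$ for $p \geq 1$ of a finite \'etale morphism $f\colon Y \to X$, which yields the Shapiro-type isomorphism $H^q_{\et}(Y, f^*\mathcal{F}) \cong H^q_{\et}(X, f_*f^*\mathcal{F})$ with $f_*f^*\mathcal{F}$ still locally constant constructible torsion on $X$. Applying (ii) to $f_*f^*\mathcal{F}$ produces a finite \'etale $g\colon W \to X$ trivializing the corresponding class on $X$; by flat base change, the cover $W \times_X Y \to Y$ (or a suitable pointed connected component of it) then kills $\beta$ on $Y$.

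The main technical obstacle is the stalk identification in the second paragraph: points of the site $X_{\fet}$ correspond to geometric points of $X$ via the fiber functor, and one must verify carefully that $R^q\rho_*\mathcal{F}$ is the sheafification of the presheaf $Y \mapsto H^q_{\et}(Y, f^*\mathcal{F})$, with stalk given precisely by the colimit over pointed finite \'etale covers. This is standard but requires some delicate topos-theoretic bookkeeping, after which the two halves of the proof become essentially formal consequences of the Leray spectral sequence and Shapiro's lemma.
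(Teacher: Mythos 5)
The paper itself offers no proof of this proposition --- it is quoted verbatim from Achinger --- so I am measuring your argument against the standard one, which indeed runs exactly along your lines: identify condition (i) with the vanishing of $R^q\rho_*\shF$ for $q\ge 1$, compute the stalk of $R^q\rho_*\shF$ at the unique point of $X_{\fet}\simeq B\pi_1^\et(X,\bar x)$ as $\varinjlim H_{\et}^q(Y,f^*\shF)$ over pointed connected finite \'etale covers, and use the finiteness of $f$ to replace $H_{\et}^q(Y,f^*\shF)$ by $H_{\et}^q(X,f_*f^*\shF)$ with $f_*f^*\shF$ again locally constant constructible torsion. Your (ii)$\Rightarrow$(i) half is complete and correct.

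The gap is in (i)$\Rightarrow$(ii), which you reduce to the assertion that (i) forces $R^q\rho_*\shF=0$; that implication is \emph{not} a formal consequence of the Leray spectral sequence for $X$ alone. Since $X_{\fet}$ has a single point, $E_2^{0,q}=H^0_{\fet}(X,R^q\rho_*\shF)$ is the module of $\pi_1^\et(X,\bar x)$-invariants of the stalk, and the five-term exact sequence (together with the degeneration obtained inductively in $q$) only gives the vanishing of these invariants, not of the stalk itself. To close this you must first show that (i) is inherited by every finite \'etale cover $Y\to X$ --- which follows from the same $f_*f^*$/Shapiro comparison you already invoke in the other direction, applied to induced modules, and is precisely part (b) of Achinger's Proposition 3.2 --- and then run the spectral-sequence argument over each $Y$, using that a discrete $\pi_1$-module is the union of its $N$-invariants over open normal subgroups $N$. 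Alternatively, (i)$\Rightarrow$(ii) admits a shortcut that bypasses $R^q\rho_*$ entirely: continuous cohomology of the profinite group $\pi_1^\et(X,\bar x)$ satisfies $H^q(\pi_1,M)=\varinjlim_N H^q(\pi_1/N,M^N)$, and for $q\ge 1$ the composite of inflation from $\pi_1/N$ with restriction to $N$ is zero; so if $\beta=\rho^q(\alpha)$ with $\alpha$ inflated from $\pi_1/N$, the finite \'etale cover corresponding to $N$ kills $\beta$ by functoriality of $\rho^q$. This is in effect the argument the paper runs inside the proof of Theorem \ref{rcnb}. By contrast, the stalk identification that you single out as the main technical obstacle is standard and unproblematic.
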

The following lemma will be needed in the last section.
\begin{lemma}\label{h1}
Let $(X,  \bar x)$  be a pointed connected noetherian   scheme. Then  
\begin{itemize}
\item [(a)] For any locally constant  constructible torsion \'etale sheaf $\shF$ on $X$ we have 
$$H_{\et}^1(X,  \mathcal{F}) \simeq   H^1(\pi_1^\et(X, \bar x),  \mathcal{F}_{\bar x})$$

\item [(b)] If $Y \ra X$ is a finite \'etale cover, then $Y$ is an algebraic $K(\pi,1)$ space if only if $X$ is.

\end{itemize}

\end{lemma}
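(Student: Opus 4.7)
For part (a), the plan is to invoke the same Cartan--Leray-type spectral sequence used in the proof of Theorem~\ref{rcnb}: applied to the universal cover $p:(Et(X)^{\thicksim},\tilde x_{\et}) \to (Et(X),\bar x_{\et})$, the five-term low-degree exact sequence reads
$$0 \ra H^1(\pi_1^{\et}(X,\bar x),\shF_{\bar x}) \ra H_{\et}^1(X,\shF) \ra H^0(\pi_1^{\et}(X,\bar x), H^1(Et(X)^{\thicksim}, p^{*}\shF)) \ra \cdots$$
using $(p^{*}\shF)_{\tilde x_{\et}} = \shF_{\bar x}$. The claim then reduces to the vanishing $H^1(Et(X)^{\thicksim}, p^{*}\shF) = 0$, which holds because $Et(X)^{\thicksim}$ is simply connected and the pullback $p^{*}\shF$ of the locally constant constructible torsion sheaf $\shF$ becomes constant on it, so every torsor under it is trivial.

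For part (b), the plan is to use the cover-theoretic criterion of Proposition~\ref{kpcover}. The implication $(\Leftarrow)$ is immediate: given a locally constant constructible torsion sheaf $\shF$ on $X$ and a class $\beta \in H^q_{\et}(X,\shF)$, applying the hypothesis on $Y$ to $f^{*}\beta$ yields a finite \'etale $h:Y'\to Y$ with $h^{*}f^{*}\beta = 0$, so the composite $f\circ h:Y'\to X$ is a finite \'etale cover of $X$ that kills $\beta$, and Proposition~\ref{kpcover} then gives the $K(\pi,1)$ property of $X$.

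The direction $(\Rightarrow)$ is carried out via pushforward. For $\gamma \in H^q_{\et}(Y,\shG)$ with $\shG$ locally constant constructible torsion, the pushforward $f_{*}\shG$ is again locally constant constructible torsion (its stalk at $\bar x$ is the finite direct sum of stalks of $\shG$ over the geometric fiber of $f$ at $\bar x$), and since $f$ is finite \'etale one has $R^if_{*}\shG = 0$ for $i\geq 1$, so the Leray spectral sequence collapses to $H^q_{\et}(Y,\shG) \simeq H^q_{\et}(X, f_{*}\shG)$. Applying the $K(\pi,1)$ property of $X$ to the transported class, I obtain $g:X'\to X$ finite \'etale killing it. Forming the Cartesian square with projections $g':Y\times_X X'\to Y$ and $f':Y\times_X X'\to X'$, flat base change gives $g^{*}f_{*}\shG \simeq f'_{*}(g')^{*}\shG$ and the Leray isomorphism is compatible with this base change, whence $(g')^{*}\gamma = 0$. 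The only point requiring care is this base-change compatibility, but since $f$ is finite \'etale, $f_{*}$ is exact and commutes with arbitrary base change, making the verification routine.
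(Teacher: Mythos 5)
Your argument is correct. For part (a) you follow exactly the paper's route: the paper likewise extracts the five-term exact sequence \eqref{h12} from the spectral sequence in the proof of Theorem~\ref{rcnb} and quotes the identifications $H^0(Et(X)^{\thicksim},p^{*}\shF)=\shF_{\bar x}$ and $H^1(Et(X)^{\thicksim},p^{*}\shF)=0$; your justification of the latter vanishing (simple connectivity plus constancy of $p^{*}\shF$ upstairs) is a welcome addition, since the paper asserts it without comment. For part (b) you genuinely diverge: the paper simply cites Achinger \cite[Proposition~3.2.(b)]{ACH1} and offers, as an alternative, the invariance of higher \'etale homotopy groups under finite \'etale covers (Lemma~\ref{homotop}) combined with Proposition~\ref{kphom} --- an argument that only applies in the geometrically unibranch case. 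Your direct proof via the class-killing criterion of Proposition~\ref{kpcover} is self-contained and needs no unibranch hypothesis: the descent direction is the evident composition of covers, and the ascent direction via $f_{*}$, the vanishing $R^{i}f_{*}\shG=0$ for $i\geq 1$, and finite base change $g^{*}f_{*}\shG\simeq f'_{*}(g')^{*}\shG$ is exactly the standard mechanism (and is essentially how Achinger proves his Proposition 3.2.(b)). Two cosmetic points: your labels $(\Leftarrow)$ and $(\Rightarrow)$ are swapped relative to the statement ``$Y$ is a $K(\pi,1)$ if and only if $X$ is,'' and in the ascent direction one should note that $Y\times_{X}X'$ need not be connected, but restricting to any connected component still yields a finite \'etale cover of $Y$ on which the class dies, so nothing is lost.
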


\proof (a): This  follows  from the exact  sequence  (\ref{h12}) in the proof of  Theorem \ref{rcnb}
$$0 \ra  H^1(\pi_1^\et(X, \bar{x}),   H^0( Et(X)^\thicksim,  p^*\mathcal{F}))       \ra   H_{\et}^1(X, \mathcal{F})   \ra   H^0(\pi_1^\et(X, \bar{x}),   H^1(Et(X)^\thicksim,  p^*\mathcal{F}))$$
and the fact that  $H^0( Et(X)^\thicksim,  p^*\mathcal{F}) =  \mathcal{F}_{\bar x}$ and $H^1(Et(X)^\thicksim,  p^*\mathcal{F})  =0$.
For an Alternative proof by means of torsor interpretation of  $H_{\et}^1(X, \shF)$ see \cite[Lemma 4.3]{ACH2}.

(b): This is \cite[Proposition 3.2.(b)]{ACH1}. Alternatively, since    $Y \ra X$ is a finite \'etale surjective morphism. Then  $X$ is normal if only if $Y$ is. Therefore, for the normal case, the statement can be deduced  from Lemma \ref{homotop} and Proposition \ref{kphom}.
\qed

\medskip

\begin{remark}\label{kpdep}

It should be pointed out that assertions in Proposition \ref{kphom} and Proposition \ref{kpcover}  depend on the integer $q$,  that is in particular for a pointed connected noetherian geometrically unibranch  scheme $(X,  \bar x)$ and $q=2$, the following statements are equivalent

\begin{itemize} 
\item [(i)] $\pi_2^\et(X, \bar{x}) =0$.

\item [(ii)] $H_{\et}^2(X,  \mathcal{F}) \simeq   H^2(\pi_1^\et(X, \bar x),  \mathcal{F}_{\bar x})$ for every locally constant constructible torsion \'etale sheaf $\mathcal{F}$ on $X$.

\item [(iii)] For every locally constant constructible torsion \'etale  sheaf  $\mathcal{F}$ on $X$,  and every class $\beta \in  H_{\et}^2(X,  \mathcal{F})$,  there exists a finite \'etale   cover $f: Y \rightarrow X$ such that $f^{*}(\beta) = 0$ in  $H_{\et}^2(Y,  f^{*}\shF)$.
\end{itemize} 
This is largely  sufficient for our purposes in this paper.

\end{remark}

\begin{proposition}\label{kp1finit}

Let $X$ be a connected scheme over a field k of characteristic 0,  such that any pair of points $(x, y) \in X$ is contained in an affine open scheme.  If $X$ is a $K(\pi,  1)$ space. Then  $\Br(X)= \Br'(X)$. 

\end{proposition}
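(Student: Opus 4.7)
The plan is to combine the characterization of algebraic $K(\pi,1)$ spaces in terms of finite \'etale trivialization of torsion cohomology (Proposition \ref{kpcover}) with the descent criterion for the Brauer map provided by Lemma \ref{etale} and its refinement in Remark \ref{h2cech}.

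I would start with an arbitrary cohomological Brauer class $\beta \in \Br'(X) = H^2_{\et}(X, \GG_{m,X})_{\rm tor}$. By Theorem \ref{groth}(iv) together with the remark after Lemma \ref{galois}, there is an integer $n \geq 1$ (which we may take invertible on $X$, since $k$ has characteristic zero) and a class $\alpha \in H^2_{\et}(X, \mu_{n,X})$ whose image under the connecting map of the Kummer sequence
\[
1 \lra \mu_{n,X} \lra \GG_{m,X} \xrightarrow{\,\cdot n\,} \GG_{m,X} \lra 1
\]
is $\beta$. The key point is that $\mu_{n,X}$ is a locally constant constructible torsion \'etale sheaf on $X$, precisely the class of sheaves to which the $K(\pi,1)$ hypothesis applies.

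Since $X$ is an algebraic $K(\pi,1)$ space, Proposition \ref{kpcover}(ii) produces a finite \'etale cover $f : Y \ra X$ such that $f^{*}(\alpha) = 0$ in $H^2_{\et}(Y, \mu_{n,Y})$. Functoriality of the Kummer connecting homomorphism then forces $f^{*}(\beta) = 0$ in $H^2_{\et}(Y, \GG_{m,Y})$. At this point the hypothesis that every pair of points of $X$ lies in an affine open, combined with Remark \ref{h2cech}, allows us to invoke Lemma \ref{etale}: the existence of a finite \'etale cover trivializing $\beta$ implies that $\beta$ lies in the image of $\delta$. As $\beta$ was arbitrary in $\Br'(X)$, the Brauer map $\delta: \Br(X) \ra \Br'(X)$ is surjective, and combined with its injectivity (Theorem \ref{groth}(iii)) we conclude $\Br(X) = \Br'(X)$.

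No step here is a serious obstacle; the argument is essentially a packaging of tools already established in Sections \ref{S1} and \ref{S3}. The only mild subtlety is checking that we may assume the torsion order $n$ is invertible on $X$ (automatic in characteristic zero) and that the pair-of-points hypothesis suffices in place of the finite-set-of-points hypothesis originally stated in Hoobler's version of Lemma \ref{etale} --- both issues are handled by Remark \ref{h2cech}.
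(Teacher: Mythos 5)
Your proof is correct and follows exactly the same route as the paper's own (much terser) argument: lift the Brauer class to $H^2_{\et}(X,\mu_{n,X})$ via the Kummer sequence, trivialize it on a finite \'etale cover using Proposition \ref{kpcover}, and conclude by Lemma \ref{etale} together with Remark \ref{h2cech}. The extra care you take with the Kummer lifting and the pair-of-points hypothesis is welcome but does not change the substance.
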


\proof

Let  $\alpha \in  H_{\et}^2(X, \mu_{n, X})$ for some integer $n$,  by Proposition \ref{kpcover} there exists a finite \'etale cover $f: Y \rightarrow X$ such that $f^{*}(\alpha)=0$,  hence by Lemma \ref{etale} $\Br(X)= \Br'(X)$. 
\qed

\medskip

\begin{proposition}

Let $X$ be a  regular   connected scheme over a field k of characteristic 0.  If $X$ is a $K(\pi,  1)$ space. Then  $\Br(X)= \Br'(X)$. 

\end{proposition}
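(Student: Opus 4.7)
The plan is to imitate the proof of Proposition \ref{kp1finit}, but replace the use of Lemma \ref{etale} (which required the affine-pair hypothesis) by Lemma \ref{galois} (which requires regularity of $X$, exactly the hypothesis we now have). The main subtlety is that Proposition \ref{kpcover} only produces an arbitrary finite \'etale trivializing cover, while Lemma \ref{galois} demands a Galois one, so we will have to pass to a Galois closure.

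First I would pick an arbitrary class $\beta \in \Br'(X) = H^2_{\et}(X,\GG_{m,X})_{\rm tor}$. By Theorem \ref{groth}.(iv) (or directly by the Kummer sequence in characteristic zero) there is an integer $n \geq 1$ and a class $\alpha \in H^2_{\et}(X,\mu_{n,X})$ whose image under the map $H^2_{\et}(X,\mu_{n,X}) \to H^2_{\et}(X,\GG_{m,X})$ induced by $\mu_{n,X} \hookrightarrow \GG_{m,X}$ equals $\beta$. Since $\mu_{n,X}$ is a locally constant constructible torsion \'etale sheaf on $X$ and $X$ is an algebraic $K(\pi,1)$ space, Proposition \ref{kpcover} produces a finite \'etale cover $f : Y \to X$ with $f^{*}\alpha = 0$ in $H^2_{\et}(Y,\mu_{n,Y})$.

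Next I would refine $f$ to a Galois cover. Since $X$ is connected and noetherian, every finite \'etale cover is dominated by a finite \'etale Galois cover $g : Z \to X$: take $Z$ to be the Galois closure of $Y \to X$ (equivalently, $Z$ corresponds to the intersection of the conjugates of $\pi_1^{\et}(Y,\bar y)$ inside $\pi_1^{\et}(X,\bar x)$, which is an open normal subgroup of finite index). There is then an $X$-morphism $Z \to Y$, and functoriality gives $g^{*}\alpha = 0$ in $H^2_{\et}(Z,\mu_{n,Z})$. Mapping through $\mu_{n,Z} \hookrightarrow \GG_{m,Z}$ we deduce $g^{*}\beta = 0$ in $H^2_{\et}(Z,\GG_{m,Z})$.

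Finally, since $X$ is regular and the class $\beta \in H^2_{\et}(X,\GG_{m,X})$ was killed by the \'etale Galois cover $g$, Lemma \ref{galois} applies and yields $\Br(X) = \Br'(X)$. The only nontrivial point in the plan is the passage from an arbitrary trivializing finite \'etale cover to a Galois one, and this is a standard consequence of the Galois correspondence of \cite{SGA1}; regularity of $X$ is used only at the very last step, to invoke Lemma \ref{galois} in place of the stronger Lemma \ref{etale}.
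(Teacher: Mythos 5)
Your argument is correct, but it follows a genuinely different route from the paper. The paper disposes of this proposition in two lines: since a regular scheme is normal, hence geometrically unibranch, Proposition \ref{kphom} converts the $K(\pi,1)$ hypothesis into the vanishing of the higher \'etale homotopy groups, in particular $\pi_2^{\et}(X,\bar x)=0$, and then Theorem \ref{rcnb} (whose proof runs through the spectral sequence of the universal cover of $Et(X)$, the Hurewicz and universal coefficient theorems, and the expression of profinite group cohomology as a colimit over open normal subgroups) does all the work. You instead stay entirely at the level of the cohomological definition of an algebraic $K(\pi,1)$: Proposition \ref{kpcover} hands you a finite \'etale cover trivializing a $\mu_n$-lift of the Brauer class, you pass to the Galois closure (after shrinking to a connected component of the cover if necessary, a point you elide but which is harmless since $X$ is connected), and you invoke Lemma \ref{galois}. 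This bypasses the \'etale homotopy machinery and the geometrically unibranch step completely, and in fact does not use that $X$ is of finite type over $k$. One small point to be aware of in both your argument and the paper's: Lemma \ref{galois} as stated asks that \emph{every} class of $H^2_{\et}(X,\GG_{m,X})$ be killed by a Galois cover, whereas you (like the author in the proof of Theorem \ref{rcnb}) only treat torsion classes; this is justified here because a regular connected noetherian scheme is integral and $H^2_{\et}(X,\GG_{m,X})$ injects into the Brauer group of the function field, hence is torsion. The trade-off is clear: the paper's proof is a one-line corollary once Theorem \ref{rcnb} is in place and isolates the weaker hypothesis $\pi_2^{\et}=0$ as the relevant one, while yours is more elementary and self-contained given the Achinger-style characterization of $K(\pi,1)$ spaces.
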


\proof Every regular scheme is normal, and  hence geometrically unibranch.  Thus the assertion follows from Theorem \ref{rcnb}  and Proposition \ref{kphom}.
\qed

\medskip

It is proven in \cite{DJNG} that $\Br(X)= \Br'(X)$ when $X$ is a scheme with ample invertible sheaf. This   holds when $X$ is in particular a regular quasi-projective geometrically irreducible variety over a field $k$. On another hand,  it is pointed out in \cite[Section 4]{ARTN}  that for a  quasi-projective variety  $X$ over a field $k$,  any finite set of points of $X$ is contained in affine open scheme, thus one can deduce the following immediate corollary.

\begin{corollary}

Let $X$ be a connected,  quasi-projective variety over a field k of characteristic 0.  If $X$ is a $K(\pi,  1)$ space, then  $\Br(X)= \Br'(X)$.

\end{corollary}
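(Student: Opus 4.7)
The plan is to observe that this corollary follows immediately by combining Proposition \ref{kp1finit} with the classical topological property of quasi-projective varieties. Concretely, Proposition \ref{kp1finit} already gives the conclusion $\Br(X) = \Br'(X)$ for any connected $K(\pi,1)$ space in characteristic zero, \emph{provided} that every pair of points of $X$ lies in a common affine open subscheme. So the only thing to check is this last hypothesis for our $X$.

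To verify the hypothesis, I would appeal to the standard fact (used in Section \ref{S1} and recalled in the remark following Lemma \ref{etale}, with reference to \cite{ARTN}) that in a quasi-projective variety any finite set of points is contained in an affine open subscheme. The underlying argument is the usual one: realize $X$ as a locally closed subvariety of some $\mathbb{P}^n_k$ with projective closure $\bar X$, and given two points $x,y \in X$ together with the finite boundary $\bar X \setminus X$, use that $k$ is infinite (characteristic $0$) to choose a hyperplane $H \subset \mathbb{P}^n_k$ that misses $\{x,y\}$ and contains (a defining hyperplane section for) the complement $\bar X \setminus X$; then $X \cap (\mathbb{P}^n_k \setminus H)$ is a closed subvariety of the affine space $\mathbb{P}^n_k \setminus H \simeq \mathbb{A}^n_k$, hence affine, and contains $x$ and $y$. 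In particular the pairwise condition of Proposition \ref{kp1finit} is met.

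Having confirmed both hypotheses, the conclusion is immediate: every $K(\pi,1)$ quasi-projective variety $X$ over a characteristic zero field satisfies the conditions of Proposition \ref{kp1finit}, so $\Br(X) = \Br'(X)$. There is no real obstacle; the only subtle point is making the affine-open condition explicit, which in the paper is already treated as folklore and cited from \cite{ARTN}, so in the actual write-up it is sufficient to state that a quasi-projective variety has the required property and then invoke Proposition \ref{kp1finit}.
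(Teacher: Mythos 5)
Your proposal is correct and follows exactly the paper's route: the corollary is stated there as an immediate consequence of Proposition \ref{kp1finit} together with the fact, cited from \cite[Section 4]{ARTN}, that any finite set of points of a quasi-projective variety lies in a common affine open. Your additional sketch of the hyperplane/hypersurface argument is not needed (and as stated is slightly loose, since the boundary $\bar X\setminus X$ need not lie in a hyperplane, only in a hypersurface of some degree), but the citation you fall back on is precisely the one the paper uses.
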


\section{Proper case: Descent of Brauer maps}\label{S4}

Proper schemes over algebraically closed fields have nice properties such that the stability of the \'etale fundamental group and \'etale cohomology groups after base changing to another algebraically closed field. The cohomological Brauer group behaves in the same way in this case.
\begin{proposition}\label{propbr}

 Let $k \subset K$  be an extension of algebraically closed fields of characteristic 0,  and let $X$ be a  proper,  geometrically connected  scheme of finite type over the field $k$. Then  $\Br'(X)= \Br'(X_{K})$.

\end{proposition}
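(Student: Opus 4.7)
The plan is to reduce to $n$-torsion statements and apply the Kummer sequence termwise. Since $\Br'(X) = \bigcup_{n \geq 1} H_{\et}^2(X, \GG_{m, X})[n]$, and similarly for $X_K$, it suffices to show that the pullback map
\begin{equation*}
H_{\et}^2(X, \GG_{m, X})[n] \lra H_{\et}^2(X_K, \GG_{m, X_K})[n]
\end{equation*}
is an isomorphism for every $n \geq 1$. Using the Kummer sequence $1 \to \mu_{n} \to \GG_{m} \to \GG_{m} \to 1$ (valid since $\operatorname{char}(k) = 0$), both groups fit into the commutative ladder with exact rows
\begin{equation*}
\begin{CD}
0 @>>> \Pic(X)/n @>>> H_{\et}^2(X, \mu_{n, X}) @>>> H_{\et}^2(X, \GG_{m, X})[n] @>>> 0 \\
@. @VVV @VVV @VVV \\
0 @>>> \Pic(X_K)/n @>>> H_{\et}^2(X_K, \mu_{n, X_K}) @>>> H_{\et}^2(X_K, \GG_{m, X_K})[n] @>>> 0
\end{CD}
\end{equation*}
so by the five lemma it is enough to prove that the two leftmost vertical arrows are isomorphisms.

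For the middle column, the invariance of \'etale cohomology with torsion coefficients for proper schemes under extension of algebraically closed base fields (a consequence of proper base change together with continuity of \'etale cohomology along cofiltered limits) gives the isomorphism $H_{\et}^q(X, \mu_{n, X}) \simeq H_{\et}^q(X_K, \mu_{n, X_K})$ for every $q$. Here properness of $X$ over $\Spec(k)$ and the fact that $\mu_{n, X}$ is a locally constant constructible torsion sheaf are the essential inputs.

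For the left column, my argument is that in characteristic zero the Picard scheme $\operatorname{Pic}^0_{X/k}$ is smooth by Cartier's theorem on group schemes, hence $\Pic^0(X) = \operatorname{Pic}^0_{X/k}(k)$ is the group of $k$-points of a connected commutative algebraic group over an algebraically closed field, and is therefore divisible. Consequently $\Pic^0(X)/n = 0 = \Pic^0(X_K)/n$, so $\Pic(X)/n = \NS(X)/n$ and likewise after base change. The component-group scheme $\pi_0(\operatorname{Pic}_{X/k})$ is an \'etale group scheme locally of finite type over $k$, so its groups of $k$- and $K$-points coincide, giving $\NS(X) = \NS(X_K)$ and finishing the comparison. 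Combining the two ends of the ladder with the five lemma yields $\Br'(X) \simeq \Br'(X_K)$.

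The main obstacle I anticipate is the careful treatment of the left column without assuming smoothness of $X$: establishing divisibility of $\Pic^0(X)$ and invariance of $\NS$ under extension of algebraically closed base fields ultimately relies on representability and smoothness of the Picard functor in characteristic zero, and on the identification of the component group as an \'etale $k$-scheme, which are the non-trivial ingredients one must invoke.
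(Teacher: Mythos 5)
Your proof is correct and shares the paper's overall skeleton: reduce to $n$-torsion, write the Kummer ladder with rows $0 \ra \Pic(\cdot)_n \ra H_{\et}^2(\cdot,\mu_n) \ra {}_n\!\Br'(\cdot) \ra 0$, and settle the middle column by the invariance of \'etale cohomology with torsion coefficients for proper schemes under extension of algebraically closed fields (the paper quotes proper base change; your remark that one also needs continuity along cofiltered limits is the honest version of that citation). Where you genuinely diverge is the Picard column. The paper compares the $n$-torsion subgroups ${}_n\!\Pic(X) \simeq {}_n\!\Pic(X_K)$ by identifying both with $H_{\et}^1(\cdot,\mu_n) \simeq H^1(\pi_1^{\et}(\cdot),\mu_n)$, using the invariance of $\pi_1^{\et}$ under extension of algebraically closed fields together with Lemma \ref{h1}.(a); since the left column of the ladder is the cokernel $\Pic(\cdot)_n = \Pic(\cdot)/n$ rather than the kernel, your argument --- divisibility of $\Pic^0$ from smoothness of the Picard scheme in characteristic zero, plus invariance of $\NS$ via the \'etale component group --- attacks exactly the group the diagram requires, which is a point in its favor. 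The cost is heavier input: you must invoke representability of $\operatorname{Pic}_{X/k}$ for proper schemes and Cartier's theorem, whereas the paper gets by with the softer facts that $\pi_1^{\et}$ and $H^1$ with finite coefficients are insensitive to the extension $k \subset K$, in keeping with the homotopy-theoretic flavor of the rest of the article. Both routes are valid.
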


\proof

Fix an integer $n$, and consider the Kummer exact sequence 
$$
1 \lra \mu_{n,X}  \lra \GG_{m, X}  \overset{x \ra x^n}\lra \GG_{m, X} \ra 1
$$
The corresponding exact sequence of  cohomology yields a  short exact sequence
$$
0 \lra \Pic(X)_n   \lra H_{\et}^2(X, \mu_{n,X})   \lra  {}_n\!\Br'(X)  \lra 0
$$
where  $\Pic(X) = H_{\et}^1(X, \GG_{m, X})$. 
We have a similar  exact sequence for $X_K$, which gives rise to the following commutative diagram 

\begin{equation}
\begin{CD}
0 @>>> \Pic(X)_n@>>> H_{\et}^2(X, \mu_{n, X})@>>> {}_n\!\Br'(X)@>>> 0\\
@. @VVV @VVV @VVV\\
0 @>>> \Pic(X_K)_n@>>> H_{\et}^2(X_K, \mu_{n, X_K})@>>> {}_n\!\Br'(X_K)@>>> 0
\end{CD}
\end{equation}

The map $  H_{\et}^2(X, \mu_{n, X})  \ra H_{\et}^2(X_K, \mu_{n, X_K})$ is an isomorphism by the proper base change theorem  \cite[Exp. XII,  Corollary 5.4]{SGA4}. On another hand, for every geometric point $\bar x \ra X_K$,  we have by  \cite[Proposition 5.3]{ES}
  $\pi_1^\et(X_K, \bar{x}) \simeq \pi_1^\et(X, \bar{x}) $. It follows from Lemma \ref{h1}.(a) that
\begin{align}
{}_n\!\Pic(X_K) & \simeq    H_{\et}^1(X_K, \mu_{n, X_K})  \notag  \\
& \simeq  H^1(\pi_1^\et(X_K, \bar{x}), (\mu_{n, X_K})_{\bar x}) \notag \\
& \simeq  H^1(\pi_1^\et(X, \bar{x}), (\mu_{n, X})_{\bar x}) \notag \\
 & \simeq    H_{\et}^1(X, \mu_{n, X})  \notag  \\
 & \simeq    {}_n\!\Pic(X) \notag  
\end{align}
Therefore,  from the above diagram we conclude that $\Br'(X)= \Br'(X_{K})$.
 \qed

\medskip
Descent of Brauer maps in Proposition \ref{desbr},  can be extended to schemes over subfields of $\CC$ by the properness condition which implies the compactness  of $X^{an}$ in addition to Proposition  \ref{propbr}. We have then the following result.

\begin{proposition}\label{prpr}  Let $X$ be a  proper,  geometrically connected  scheme of finite type over  an algebraically closed field $k$. Suppose that $k$ can be embedded as a subfield of $\CC$ 
 and such that 
\begin{itemize}
\item [(i)] $X_\CC$ is an Artin neighborhood over $\Spec(\CC)$.

\item [(ii)]  The natural morphism $\Br(X) \lra  \Br(X_{\CC})$ is surjective.
\end{itemize}
Then $\Br(X)= \Br'(X)$.

\end{proposition}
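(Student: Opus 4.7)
The plan is to descend the complex-analytic result (Corollary~\ref{propb}) to the base field $k$ by base change. Fix an embedding $k \hookrightarrow \CC$ and consider the commutative square
\[
\begin{CD}
\Br(X) @>>> \Br(X_\CC) \\
@V{\delta_X}VV @VV{\delta_{X_\CC}}V \\
\Br'(X) @>>> \Br'(X_\CC)
\end{CD}
\]
whose horizontal arrows are pullback along $X_\CC \to X$ and whose vertical arrows are the (always injective) Brauer maps of Theorem~\ref{groth}. Since $X$ is proper over $k$, $X_\CC$ is proper over $\CC$; combining hypothesis~(i) with Corollary~\ref{propb} therefore makes the right vertical arrow bijective. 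Moreover, because $k$ and $\CC$ are both algebraically closed of characteristic zero, Proposition~\ref{propbr} identifies the bottom horizontal arrow as an isomorphism.

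The proposition then reduces to a short diagram chase. Given $\beta \in \Br'(X)$, push it to $\beta_\CC \in \Br'(X_\CC)$; by the bijectivity of $\delta_{X_\CC}$ we may write $\beta_\CC = \delta_{X_\CC}(\alpha_\CC)$ for some $\alpha_\CC \in \Br(X_\CC)$, and hypothesis~(ii) furnishes a lift $\alpha \in \Br(X)$ with image $\alpha_\CC$. Then $\delta_X(\alpha)$ and $\beta$ both map to $\beta_\CC$ in $\Br'(X_\CC)$, so the injectivity of $\Br'(X) \to \Br'(X_\CC)$ forces $\delta_X(\alpha) = \beta$. Hence $\delta_X$ is surjective.

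The only auxiliary point requiring verification is commutativity of the square, i.e.\ the naturality of $\delta$ under pullback of schemes; this is immediate from the construction of $\delta$ via the exact sequence $1 \to \GG_{m,X} \to \GL_n(\shO_X) \to \PGL_n(\shO_X) \to 1$, which pulls back along $X_\CC \to X$ to the analogous sequence on $X_\CC$. There is no genuine obstacle in this argument itself: all the substantive content is packaged in Corollary~\ref{propb} (ultimately Schr\"oer's topological criterion, Theorem~\ref{schr}) and Proposition~\ref{propbr} (which rests on proper base change and the invariance of $\pi_1^\et$ under extensions of algebraically closed base fields). The proposition serves as a clean bridge between these two inputs, with hypothesis~(ii) being precisely what is needed to lift Azumaya representatives from $X_\CC$ back to $X$.
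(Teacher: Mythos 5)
Your argument is correct and is essentially the paper's own proof: the same commutative square, with the right vertical arrow handled by Corollary~\ref{propb}, the bottom arrow by Proposition~\ref{propbr}, and hypothesis~(ii) supplying the lift; the paper merely phrases the final step as ``the top map is injective by the diagram, hence bijective by~(ii),'' whereas you run the equivalent diagram chase directly. No gap.
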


\proof  Consider the following commutative diagram
\begin{equation}
\begin{CD}
 \Br(X) @>>> \Br(X_\CC) \\
@VVV @VVV \\
 \Br'(X) @>>> \Br'(X_\CC) 
\end{CD}
\end{equation}
The  map on the right is an isomorphism by Corollary \ref{propb}. By Proposition \ref{propbr} the lower map is also an isomorphism. The injectivity of Brauer maps (Theorem \ref{groth}.(iii))  implies  that $\Br(X) \ra  \Br(X_{\CC})$ is injective,  hence bijective. Thus the assertion.
\qed

\medskip

\section{Smooth case: Descent of Artin neighborhoods}\label{S5}
Le $X$ be a smooth scheme over a field $k$,  then $X$ is in particular a  regular scheme,  and hence  geometrically unibranch. Keeping in mind Proportion \ref{kphom},  we use descent properties of $K(\pi, 1)$ spaces  to apply Theorem \ref{rcnb}. We need the following proposition,  which is a first descent result concerning  Artin neighborhoods. The notations $\pi_n^{\top}$ and $\pi_n^\et$ will be used to make difference between topological and \'etale homotopy groups.
\begin{proposition}
\label{artkp}
Let  $X$ be  a   smooth  connected scheme of finite type  over $\CC$,  if X is an Artin neighborhood over $\Spec(\CC)$, then   $X$ is an algebraic $K(\pi, 1)$ space.
\end{proposition}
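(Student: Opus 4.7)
The plan is to verify the cohomological criterion of Definition \ref{defkp} directly: for every locally constant constructible torsion \'etale sheaf $\shF$ on $X$ with stalk $M := \shF_{\bar x}$, we need the natural morphism $\rho^q\colon H^q(\pi_1^\et(X,\bar x), M) \to H_{\et}^q(X,\shF)$ to be an isomorphism for all $q \geq 0$. Equivalently, since $X$ smooth implies geometrically unibranch, Proposition \ref{kphom} allows one to replace this criterion by the vanishing $\pi_n^\et(X,\bar x) = 0$ for all $n \geq 2$; but I will proceed through the cohomological formulation because it assembles directly from tools already developed in the paper.

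The strategy is to factor $\rho^q$ as a composition of three classical comparison isomorphisms. Write $\shF^{\an}$ for the locally constant sheaf of finite abelian groups on $X(\CC)$ corresponding to $\shF$, and use the natural continuous homomorphism $\pi_1^{\top}(X(\CC)) \to \pi_1^\et(X,\bar x)$ provided by Riemann existence/\cite{SGA1} to view $M$ simultaneously as a module over both groups. Then I would invoke: (i) Artin's comparison theorem \cite[Exp.~XVI, Theorem 4.1]{SGA4}, giving $H_{\et}^q(X,\shF) \simeq H^q(X(\CC), \shF^{\an})$; (ii) the topological $K(\pi,1)$ property from Lemma \ref{khtp}(i), yielding $H^q(\pi_1^{\top}(X(\CC)), M) \simeq H^q(X(\CC), \shF^{\an})$; and (iii) the goodness of $\pi_1^{\top}(X(\CC))$ from Lemma \ref{khtp}(ii), which combined with the finiteness of $M$ and the identification of $\pi_1^\et(X,\bar x)$ with the profinite completion of $\pi_1^{\top}(X(\CC))$ gives $H^q(\pi_1^\et(X,\bar x), M) \simeq H^q(\pi_1^{\top}(X(\CC)), M)$. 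The composition of (iii), (ii), (i) is the desired isomorphism.

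The main delicate point will be checking that this composite actually coincides with the canonical morphism $\rho^q$ built via the Grothendieck spectral sequence / universal $\delta$-functor formalism recalled before Definition \ref{defkp}. This is a naturality statement: each of (i), (ii), (iii) arises either from a morphism of sites or from factoring sheaf cohomology through the classifying topos of a (topological or profinite) fundamental group, and each is the unique morphism of universal $\delta$-functors extending the tautological identification in degree $0$. Uniqueness of such extensions then forces the composite to equal $\rho^q$, at which point Definition \ref{defkp} is satisfied and $X$ is an algebraic $K(\pi,1)$ space.
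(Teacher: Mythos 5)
Your argument is essentially identical to the paper's second proof of this proposition: the author assembles the same commutative square relating $H^q(\pi_1^\et(X,\bar x),\shF_{\bar x})$, $H^q(\pi_1^{\top}(X(\CC),x),(p^*\shF)_x)$, $H_{\et}^q(X,\shF)$ and $H^q(X(\CC),p^*\shF)$, and invokes exactly your three ingredients (Artin comparison, the topological $K(\pi,1)$ property from Lemma \ref{khtp}, and goodness of $\pi_1^{\top}(X(\CC))$ together with $\pi_1^\et(X,\bar x)=\pi_1^{\top}(X(\CC),x)^{\wedge}$ via Riemann existence). Your closing remark on verifying that the composite agrees with the canonical $\rho^q$ is a compatibility the paper leaves implicit in asserting the diagram commutes, so the proposal is correct and follows the same route.
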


\begin{proof}[First proof] 
Let $x \in X(\CC)$,  and $\bar x \ra X$ the geometric point image of $x$ under the natural map  $p: X^{an} \ra X_\et$.  By Lemma \ref{khtp} $X(\CC)$ is weakly homotopy equivalent  to  the classifying space $B\pi_1^{\top}(X(\CC), x)$. Since $X$ is smooth,  hence geometrically unibranch,  by \cite[Corollary 12.10]{AM} the map  $ (X(\CC))^{\wedge} \rightarrow Et(X) $ is an $\natural$-isomorphism (cf. \cite[Definition 4.2]{AM}),  where $X^{\wedge}$ is the completion of $X$ (see \cite[Theorem 3.4]{AM}). Thus,  according to  \cite[Corollary 4.4]{AM}   $Et(X)$ is weakly homotopy equivalent to $(X(\CC))^{\wedge}$. Since $\pi_1^{\rm top}(X(\CC), x)$ is a good group  by Lemma \ref{khtp},  it follows from    \cite[Corollary 6.6]{AM} that $(B\pi_1^{\rm top}(X(\CC), x))^ {\wedge} =  B (\pi_1^{\rm top}(X(\CC), x)^ {\wedge})$. On the other hand, by  Riemann existence theorem \cite[Chapter III,  Lemma 3.14]{MIL} one has $\pi_1^\et(X, \bar{x}) = \pi_1^{\top}(X(\CC), x)^ {\wedge} $. Therefore, $Et(X)$  is weakly homotopy equivalent to $B\pi_1^\et(X, \bar{x})$, which means that  $\pi_n^\et(X, \bar{x}) = 0$ for all $n \geq 2$. Thus by  Proposition \ref{kphom}  $X$ is an algebraic $K(\pi, 1)$ space.
 \end{proof}

\begin{proof}[Second proof] Alternatively, one can deduce the statement  from the following commutative diagram 

$$
\begin{CD}
 H^q(\pi_1^\et(X, \bar{x}),  \mathcal{F}_{\bar x})     @>>>   H^q(\pi_1^{\top}(X(\CC), x),   (p^{*}\mathcal{F})_x)  \\
@VVV @VVV\\
H_{\et}^q(X, \mathcal{F})    @>>> H^q(X(\CC), p^{*}\mathcal{F})  
\end{CD}
$$
where $\shF$ is a  locally constant constructible torsion \'etale sheaf on $X$.  Indeed, the  map on the right  is an isomorphism because $X(\CC)$ is a topological $K(\pi, 1)$ space.  The lower map  is an isomorphism by Artin comparison theorem \cite[Exp. XVI, Theorem 4.1]{SGA4}(see also the smooth version \cite[Exp. XI, Theorem 4.4]{SGA4}),  and since  $\pi_1^\et(X, \bar{x}) = \pi_1^{\top}(X(\CC), x)^ {\wedge}$, and  $\mathcal{F}_{\bar x}= (p^{*}\mathcal{F})_x$,  it follows from the definition of good groups that the upper map is an isomorphism. Hence the assertion.
 \end{proof}
\begin{proof}[Third proof] By definition there exists a sequence of elementary fibrations 
$$X=X_i  \overset{f_i} \lra X_{i-1}  \overset{f_{i-1}} \lra... \overset{f_2} \lra X_1  \overset{f_1} \lra  X_0=\Spec(\CC)$$
Since $\Spec(\CC)$ is a $k(\pi,1)$ space in the sens of  \cite{SCHM}, it suffices to prove that for an elementary fibration $f: X \ra Y$ and $\bar{y} \ra Y$  a geometric point of $Y$, if  $\pi_n^\et(Y,  \bar y) = 0$ for $n \geq 2$   then we have the same for $X$. Let $f: X \ra Y$ be an elementary fibration, if  $\bar{x} \ra X$ and   $\bar{y} \ra Y$  are geometric points with $ \bar y = f(\bar x)$, then by   \cite[Theorem 11.5]{EMF}  there exists an exact sequence of  \'etale homotopy groups  
$$
...   \ra \pi_n^\et(X_{\bar{y}},  \bar x )   \ra  \pi_n^\et(X,  \bar x)  \ra  \pi_n^\et(Y,  \bar y) \ra  \pi_{n-1}^\et(X_{\bar{y}},  \bar x )  \ra ...
$$
On the other hand, it follows from the definition of elementary fibrations that $X_{\bar{y}}$ is a smooth affine curve, and hence by Example \ref{expkp} $  \pi_n^\et(X_{\bar{y}},  \bar x ) = 0$ for all $n \geq 2$, the assertion follows immediately. 
\end{proof}

\medskip
\begin{corollary}\label{artinc}
Let  $X$ be  a   smooth connected scheme of finite type  over $\CC$,  if X is an Artin neighborhood over $\Spec(\CC)$, then   $\Br(X)= \Br'(X)$.
\end{corollary}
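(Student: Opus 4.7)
The plan is to combine Proposition~\ref{artkp} with Theorem~\ref{rcnb} in a direct way, using Proposition~\ref{kphom} as a bridge. First I would invoke Proposition~\ref{artkp} to conclude that the smooth connected scheme $X$, being an Artin neighborhood over $\Spec(\CC)$, is an algebraic $K(\pi,1)$ space. This is the substantive input and has already been established just above the corollary.

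Next, since $X$ is smooth over $\CC$, it is in particular regular, hence normal, and thus geometrically unibranch by \cite[Proposition 6.15.6]{EGA}. Applying Proposition~\ref{kphom}, the $K(\pi,1)$ property for $X$ translates into the vanishing $\pi_n^\et(X,\bar x) = 0$ for all $n \geq 2$; in particular $\pi_2^\et(X,\bar x) = 0$. This places $X$ in exactly the setting of Theorem~\ref{rcnb}, namely a regular connected scheme of finite type over the characteristic-zero field $\CC$ whose second \'etale homotopy group is trivial. Applying that theorem yields $\Br(X) = \Br'(X)$.

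There should be no real obstacle here; the corollary is essentially a packaging of Proposition~\ref{artkp} and Theorem~\ref{rcnb}. The only point that deserves a moment of attention is the justification that smoothness over $\CC$ entails geometric unibranch-ness, which is immediate from normality. As an alternative route, one could bypass Theorem~\ref{rcnb} entirely: Artin neighborhoods are built from iterated elementary fibrations and are in particular quasi-projective, so any pair of points of $X$ sits in an affine open subscheme, and one could then feed the $K(\pi,1)$ conclusion of Proposition~\ref{artkp} directly into Proposition~\ref{kp1finit}. I would present the first argument for conciseness, since it highlights the role of Theorem~\ref{rcnb} as the central tool.
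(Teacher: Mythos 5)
Your proof is correct and follows exactly the paper's own route: Proposition~\ref{artkp} gives the algebraic $K(\pi,1)$ property, Proposition~\ref{kphom} (valid since smoothness gives geometric unibranch-ness) converts this to $\pi_2^{\et}(X,\bar x)=0$, and Theorem~\ref{rcnb} concludes. Your extra remark spelling out why $X$ is geometrically unibranch is a small but welcome addition the paper leaves implicit.
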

\proof By Proposition \ref{artkp}  $X$ is an algebraic $K(\pi, 1)$ space. Proportion \ref{kphom} shows that $\pi_n^\et(X, \bar{x})=0$ for all $n \geq 2$, thus Theorem \ref{rcnb}  implies that $\Br(X)= \Br'(X)$.
\qed

\medskip
\begin{proposition}\label{smooth}
 Let  $X$ be a  smooth,  geometrically connected  scheme of finite type over a field k. Suppose that $k$ can be embedded as a subfield of $\CC$ which is finitely generated over $\QQ$
,  and such that  $X_\CC$ is an Artin neighborhood over $\Spec(\CC)$. Then $\Br(X)= \Br'(X)$.
\end{proposition}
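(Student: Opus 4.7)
The plan is to apply Theorem~\ref{rcnb} directly to $X$. Since $X$ is smooth (hence regular), geometrically connected (hence connected), of finite type over $k$, and $k$ has characteristic $0$ as a subfield of $\CC$, the only nontrivial hypothesis to check is $\pi_2^\et(X,\bar x)=0$. I would verify this by propagating vanishing of $\pi_2^\et$ down a two-step tower $X_\CC \leadsto X_{\bar k} \leadsto X$, where $\bar k$ denotes the algebraic closure of $k$ inside $\CC$.

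The first step is immediate from the Artin neighborhood hypothesis: Proposition~\ref{artkp} shows that $X_\CC$ is an algebraic $K(\pi,1)$ space, and since $X_\CC$ is smooth (hence geometrically unibranch) Proposition~\ref{kphom} upgrades this to $\pi_n^\et(X_\CC,\bar x)=0$ for all $n\geq 2$.

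Next I would descend $\CC\leadsto\bar k$. Because $\bar k$ and $\CC$ are both algebraically closed of characteristic $0$ and $k$ is finitely generated over $\QQ$, the base change $X_\CC \to X_{\bar k}$ induces isomorphisms on étale cohomology with locally constant constructible torsion coefficients (cf.\ \cite[Exp.\ XVI]{SGA4}); an Artin--Mazur profinite Hurewicz comparison then transfers this to isomorphisms of higher étale homotopy pro-groups $\pi_n^\et(X_{\bar k},\bar x)\simeq \pi_n^\et(X_\CC,\bar x)$ for $n\geq 2$, so in particular $\pi_2^\et(X_{\bar k},\bar x)=0$. Then I would descend $\bar k \leadsto k$ via the étale-homotopy fibration sequence $Et(X_{\bar k})\to Et(X)\to B\Gal(\bar k/k)$, noting that $B\Gal(\bar k/k)$ is a $K(\Gal,1)$ and therefore has vanishing higher homotopy. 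The induced long exact sequence reads, for $n\geq 2$,
\[
0=\pi_{n+1}\bigl(B\Gal(\bar k/k)\bigr)\lra \pi_n^\et(X_{\bar k},\bar x)\lra \pi_n^\et(X,\bar x)\lra \pi_n\bigl(B\Gal(\bar k/k)\bigr)=0,
\]
whence $\pi_2^\et(X,\bar x)\simeq \pi_2^\et(X_{\bar k},\bar x)=0$. Theorem~\ref{rcnb} then yields $\Br(X)=\Br'(X)$.

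The hardest step is the descent $\CC\leadsto\bar k$: because $X$ is only smooth and not proper, proper base change is unavailable, and one must instead lean on invariance of étale cohomology with locally constant torsion coefficients under extensions of algebraically closed fields of characteristic $0$ together with a profinite recognition of $\pi_2^\et$ from $H^1$- and $H^2$-cohomology of such sheaves. The assumption that $k$ be finitely generated over $\QQ$ enters here precisely to keep the limit presentation $X_\CC=\varprojlim X_{k'}$ over finitely generated sub-extensions $k'\subset\CC$ well-behaved, so that the spreading out of finite étale covers and the attendant Hurewicz comparison are concretely available.
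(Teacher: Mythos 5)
Your overall strategy coincides with the paper's: reduce to $\pi_2^\et(X,\bar x)=0$ via the two-step descent $X_\CC \leadsto X_{\bar k} \leadsto X$ and then invoke Theorem~\ref{rcnb}. The first descent step is essentially the paper's argument (smooth base change plus invariance of $\pi_1^\et$ to show $X_{\bar k}$ is an algebraic $K(\pi,1)$, hence $\pi_2^\et(X_{\bar k},\bar x)=0$ by Proposition~\ref{kphom}), though note that the paper uses the finitely-generated-over-$\QQ$ hypothesis precisely at this point, to quote \cite[Proposition 6.1]{ES} for $\pi_1^\et(X_{\bar k},\bar x)\simeq\pi_1^\et(X_\CC,\bar x)$ --- not, as you suggest, for a spreading-out/limit argument. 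Where you genuinely diverge is the step $\bar k\leadsto k$: you assert a fibration sequence $Et(X_{\bar k})\to Et(X)\to B\Gal(\bar k/k)$ and read off $\pi_2^\et(X,\bar x)\simeq\pi_2^\et(X_{\bar k},\bar x)$ from its long exact sequence. That conclusion is correct, but the fibration sequence itself is a nontrivial input: the only exact sequence of \'etale homotopy groups available in the paper is \cite[Theorem 11.5]{EMF} for elementary fibrations, and $X\to\Spec(k)$ is not one; Friedlander's arithmetic fibration sequence (or a continuity-plus-Lemma~\ref{homotop} argument for the pro-cover $X_{\bar k}=\varprojlim X_{k'}$) would have to be cited and its hypotheses checked. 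The paper avoids this entirely: it uses only Grothendieck's homotopy exact sequence for $\pi_1$ together with the Leray spectral sequence of $X\to\Spec(k)$, kills each graded piece of the filtration on $H^2_\et(X,\shF)$ by an explicit finite \'etale cover (using that both $X_{\bar k}$ and $\Spec(k)$ are algebraic $K(\pi,1)$ spaces), and concludes $\pi_2^\et(X,\bar x)=0$ from Remark~\ref{kpdep}. Your route is shorter and conceptually cleaner if the fibration sequence is granted; the paper's is longer but entirely self-contained at the level of cohomology. As written, your proof is acceptable only once you supply a precise reference or proof for the fibration sequence over $B\Gal(\bar k/k)$.
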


\proof  Let  $p: X_{\CC} \ra  X_{\bar{k}}$  be the natural map and $\bar x \ra X_{\CC}$ a geometric point of $X_{\CC}$. By Theorem \ref{rcnb} it suffices to prove that $\pi_2^\et(X, \bar{x}) = 0$. For every locally constant constructible torsion  \'etale sheaf $ \mathcal{F}$  on $X_{\bar{k}} $ and $q \geq 0$  we have the following commutative diagram 
$$
\begin{CD}
 H^q(\pi_1^\et(X_{\bar{k}}, \bar{x}),  \mathcal{F}_{\bar{x}})     @>>>   H^q(\pi_1^\et(X_{\CC}, \bar{x}),  \mathcal{F}_{\bar{x}})  \\
@VVV @VVV\\
H_{\et}^q(X_{\bar{k}}, \mathcal{F})    @>>> H_{\et}^q(X_{\CC}, p^{*}\mathcal{F})  
\end{CD}
$$
According to   \cite[Proposition 6.1]{ES}  we have  $\pi_1^\et(X_{\bar{k}}, \bar{x}) \simeq \pi_1^\et(X_{\CC}, \bar{x}) $,  hence    the map $$H^q(\pi_1^\et(X_{\bar{k}}, \bar{x}),  \mathcal{F}_{\bar{x}}) \lra   H^q(\pi_1^\et(X_{\CC}, \bar{x}),  \mathcal{F}_{\bar{x}})$$  is an isomorphism. The map $H_{\et}^q(X_{\bar{k}}, \mathcal{F})  \rightarrow    H_{\et}^q(X_{\CC}, p^{*}\mathcal{F})   $ is an isomorphism by the smooth base change theorem \cite[Exp. XVI,  Corollary 1.6]{SGA4}. By Proposition \ref{artkp} $X_{\CC}$ is an algebraic $K(\pi, 1)$ space,  which means that
 the map $ H^q(\pi_1^\et(X_{\CC}, \bar{x}),  \mathcal{F}_{\bar{x}})  \rightarrow   H_{\et}^q(X_{\CC}, p^{*}\mathcal{F}) $ is an isomorphism. It follows that  
 $$ H^q(\pi_1^\et(X_{\bar{k}}, \bar{x}),  \mathcal{F}_{\bar{x}}) \lra H_{\et}^q(X_{\bar{k}}, \mathcal{F})$$ 
is an isomorphism. Thus $ X_{\bar{k}}$ is an algebraic $K(\pi,  1)$ space. Next, consider the Grothendieck  homotopy exact sequence (cf. \cite[Exp. IX,  Theorem 6.1]{SGA1})
$$
1 \lra \pi_1^\et(X_{\bar{k}}, \bar x )   \lra  \pi_1^\et(X,  \bar x )  \lra  \pi_1^\et(\Spec(k),  \bar x ) \lra 1
$$
The term on the right is just the absolute Galois group $\Gal( \bar k |k)$. The injectivity of the map $\pi_1^\et(X_{\bar{k}}, \bar x )   \ra  \pi_1^\et(X,  \bar x )$ means that any finite \'etale cover of $X_{\bar{k}}$ can be realized as the restriction to $X_{\bar{k}}$ of a finite \'etale cover of $X$. Now suppose that   $ \mathcal{F}$ is a  locally constant constructible torsion  \'etale sheaf   on $X$. Take a class $\beta \in  H_{\et}^2(X, \shF)$, and consider the Leray spectral sequence associated to the morphism $X \ra \Spec(k)$ 
$$
E^{p, q}_2 = H_{\et}^p(k, H_{\et}^q(X_{\bar k}, \rho^*\shF)) \Rightarrow   H_{\et}^{p+q}(X, \shF)
$$
where $\rho: X_{\bar k} \ra X$ is the natural morphism.  The   cohomology group $E^2 =  H_{\et}^2(X, \shF)$ has a filtration $F_n(E^2)_{n\geq 0}$ with three highest subquotients,  which are in fact submodules of $E^{0, 2}$,  $ E^{1, 1}$ and $E^{2, 0}$ respectively
\begin{equation}
E^2 = \left\{ \begin{array}{rcl}
E^{0+2} \thicksim   \Gr_0(E^{0+2})= F^0(E^2)/ F^1(E^2)   \simeq E^{0, 2}_\infty  \subset E^{0, 2}_2 = H_{\et}^0(k, H_{\et}^2(X_{\bar k}, \rho^*\shF))\\
E^{1+1}  \thicksim   \Gr_1(E^{1+1})= F^1(E^2)/ F^2(E^2)   \simeq E^{1, 1}_\infty  \subset E^{1, 1}_2 = H_{\et}^1(k, H_{\et}^1(X_{\bar k}, \rho^*\shF))\\
E^{2+0}  \thicksim   \Gr_2(E^{2+0})= F^2(E^2)/ F^3(E^2)  \simeq E^{2, 0}_\infty  \subset E^{2, 0}_2 =  H_{\et}^2(k, H_{\et}^0(X_{\bar k}, \rho^*\shF))

\end{array}\right.
\end{equation}
Therefore, we may assume that the class $\beta$ belongs to one of these three submodules.

If $\beta \in H_{\et}^0(k, H_{\et}^2(X_{\bar k}, \rho^*\shF)) =  H_{\et}^2(X_{\bar k}, \rho^*\shF) $,  then since $X_{\bar k}$ is a $K(\pi, 1)$ space,  there exists a finite \'etale cover  $g:Y \ra X_{\bar k}$ trivializing $\beta$ in  $H_{\et}^2(Y, g^{*}\rho^*\shF)$. As mentioned above there exists a finite \'etale cover $f:Y' \ra X$ such that $Y = Y' \times_X X_{\bar k}$. Hence we may assume that $f^{*}(\beta)$ is zero in   $H_{\et}^2(Y', f^{*}\shF)$.

If $\beta \in  H_{\et}^1(k, H_{\et}^1(X_{\bar k}, \rho^*\shF)) $,  it follows from Proposition \ref{kpcover} and Lemma \ref{h1}  (or particularly  form the fact that $\Spec(k)$ is an algebraic $K(\pi, 1)$ space)  that there exists a finite \'etale cover $g:Y \ra \Spec(k)$ trivializing $\beta$. Therefore  if we consider the finite \'etale  cover $f: Y'=X \times_k Y  \ra X$,  then we may assume that $f^{*}(\beta) = 0 $    in   $H_{\et}^2(Y', f^{*}\shF)$.

If $\beta \in  H_{\et}^2(k, H_{\et}^0(X_{\bar k}, \rho^*\shF)) $,  by the same argument we may assume the existence of a finite \'etale cover killing $ \beta$.

The up-shot is that for every class $\beta \in H_{\et}^2(X,  \shF)$ there exists a finite \'etale cover $f:Y \ra X$ such that that $f^{*}(\beta) = 0 $    in   $H_{\et}^2(Y, f^{*}\shF)$,  which means by Remark \ref{kpdep}  that the map  $ H^2(\pi_1^\et(X, \bar{x}),  \mathcal{F}_{\bar{x}}) \ra  H_{\et}^2(X, \mathcal{F})$ is an isomorphism, and hence $\pi_2^\et(X, \bar x) =0$.
\qed

\medskip
The base change arguments in the smooth case can also serve in the proper case, provided the given scheme is regular. 
\begin{proposition}\label{prprreg}  Let $X$ be a  regular  proper  geometrically connected  scheme of finite type over  an algebraically closed field $k$. Suppose that $k$ can be embedded as a subfield of $\CC$ 
 and such that $X_\CC$ is an Artin neighborhood over $\Spec(\CC)$. 
Then $\Br(X)= \Br'(X)$. 
\end{proposition}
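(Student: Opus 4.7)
The strategy is to apply Theorem \ref{rcnb}, which reduces the claim to verifying that $\pi_2^\et(X, \bar{x}) = 0$. Since $k \subset \CC$ implies $\mathrm{char}(k) = 0$, and $X$ is regular (hence normal, hence geometrically unibranch), Remark \ref{kpdep} tells us it suffices to show that for every locally constant constructible torsion \'etale sheaf $\shF$ on $X$, the natural map
$$ H^2(\pi_1^\et(X, \bar{x}), \shF_{\bar{x}}) \lra H_{\et}^2(X, \shF) $$
is an isomorphism. Put differently, I would descend the algebraic $K(\pi,1)$ property from $X_\CC$ down to $X$.

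To carry this out, let $p: X_\CC \ra X$ be the base change morphism, and for every $q \geq 0$ consider the commutative diagram
$$
\begin{CD}
H^q(\pi_1^\et(X, \bar{x}), \shF_{\bar{x}}) @>>> H^q(\pi_1^\et(X_\CC, \bar{x}), (p^*\shF)_{\bar{x}}) \\
@VVV @VVV \\
H_{\et}^q(X, \shF) @>>> H_{\et}^q(X_\CC, p^*\shF).
\end{CD}
$$
The top horizontal arrow is an isomorphism because, $X$ being proper over the algebraically closed field $k$, \cite[Proposition 5.3]{ES} gives $\pi_1^\et(X_\CC, \bar{x}) \iso \pi_1^\et(X, \bar{x})$ (this is exactly the ingredient already exploited in the proof of Proposition \ref{propbr}). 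The bottom horizontal arrow is an isomorphism by the proper base change theorem \cite[Exp. XII, Corollary 5.4]{SGA4} applied to the proper morphism $X \ra \Spec(k)$ and the torsion sheaf $\shF$. Finally, since $X_\CC$ is an Artin neighborhood, Proposition \ref{artkp} asserts that $X_\CC$ is an algebraic $K(\pi,1)$ space, so the right vertical arrow is an isomorphism. Hence the left vertical arrow is an isomorphism for all $q$, in particular for $q = 2$, which by Remark \ref{kpdep} yields $\pi_2^\et(X, \bar{x}) = 0$. Theorem \ref{rcnb} then gives $\Br(X) = \Br'(X)$.

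The argument is strictly parallel to the proof of Proposition \ref{smooth}, but substantially simpler: since $k$ is already algebraically closed, there is no Galois action to disentangle and no Leray spectral sequence over $\Gal(\bar k / k)$ to analyze piece by piece. The only real input, beyond Proposition \ref{artkp} and Theorem \ref{rcnb}, is replacing the smooth base change theorem (used in Proposition \ref{smooth}) by the proper base change theorem, which is available here precisely because $X$ is proper. I do not foresee a serious obstacle; the main thing to double-check is that the $\pi_1$-comparison from \cite{ES} is genuinely functorial on LCC torsion coefficients so that the top square commutes, but this is immediate from the compatibility of the Artin–Mazur construction with base change along extensions of algebraically closed fields.
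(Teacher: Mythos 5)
Your proof is correct and follows essentially the same route as the paper: the same commutative diagram comparing group cohomology of $\pi_1^\et$ with \'etale cohomology for $X$ and $X_\CC$, with the top arrow handled by \cite[Proposition 5.3]{ES}, the bottom by proper base change, and the right vertical arrow by Proposition \ref{artkp}. The only cosmetic difference is that the paper concludes that $X$ is an algebraic $K(\pi,1)$ space for all $q$ and then invokes Proposition \ref{kphom}, whereas you isolate the case $q=2$ via Remark \ref{kpdep}; both are immediate from the same diagram.
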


\proof

By Theorem \ref{rcnb} and Proposition \ref{kphom}  it suffices to prove that  $X$ is an algebraic $K(\pi, 1)$ space. As in the smooth case, let  $p: X_{\CC} \ra  X$  be the natural map and $\bar x \ra X_{\CC}$ a geometric point of $X_{\CC}$.  For every locally constant constructible torsion  \'etale sheaf $ \mathcal{F}$  on $X$ and $q \geq 0$  we have a commutative diagram 
$$
\begin{CD}
 H^q(\pi_1^\et(X, \bar{x}),  \mathcal{F}_{\bar{x}})     @>>>   H^q(\pi_1^\et(X_{\CC}, \bar{x}),  \mathcal{F}_{\bar{x}})  \\
@VVV @VVV\\
H_{\et}^q(X, \mathcal{F})    @>>> H_{\et}^q(X_{\CC}, p^{*}\mathcal{F})  
\end{CD}
$$
Due to    \cite[Proposition 5.3]{ES}  we have  $\pi_1^\et(X, \bar{x}) \simeq \pi_1^\et(X_{\CC}, \bar{x}) $,  hence    the map $$H^q(\pi_1^\et(X, \bar{x}),  \mathcal{F}_{\bar{x}}) \lra   H^q(\pi_1^\et(X_{\CC}, \bar{x}),  \mathcal{F}_{\bar{x}})$$  is an isomorphism. The proper  base change theorem \cite[Exp. XII,  Corollary 5.4]{SGA4} asserts that  $H_{\et}^q(X, \mathcal{F})  \simeq    H_{\et}^q(X_{\CC}, p^{*}\mathcal{F})$. The map  $ H^q(\pi_1^\et(X_{\CC}, \bar{x}),  \mathcal{F}_{\bar{x}})  \rightarrow   H_{\et}^q(X_{\CC}, p^{*}\mathcal{F}) $ is an isomorphism by Proposition \ref{artkp}. Hence the assertion.
\qed

\medskip
Now if we want to extend the statement to algebraically closed fields of characteristic 0,  we  consider  the following assumption for a scheme $X$ over a field $k$
\begin{center}
$(H)$: \{ If  $k \subseteq \CC$,  then $X_{\CC}$ is an Artin neighborhood over $\Spec(\CC)$\}
\end{center}
\begin{proposition}\label{alg0}  
Let  $X$ be a   smooth, proper   geometrically connected  scheme of finite type over a field k. Suppose that X satisfies (H). Then $\Br(X)= \Br'(X)$ when $k$ is an algebraically closed field of characteristic 0.
\end{proposition}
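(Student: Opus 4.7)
The strategy is to reduce to Proposition \ref{prprreg} via a spreading out argument. Since $X$ is of finite type over $k$, I would first descend $X$ to a proper smooth geometrically connected scheme $X_0$ over a finitely generated subfield $k_0 \subseteq k$. Let $k_1 \subseteq k$ denote the algebraic closure of $k_0$ in $k$; since $k$ is algebraically closed, $k_1$ is an algebraic closure of $k_0$, in particular countable, and hence admits an embedding $k_1 \hookrightarrow \CC$. Set $X_1 = X_0 \otimes_{k_0} k_1$, so that $X = X_1 \otimes_{k_1} k$ and $X_1$ is proper, smooth, and geometrically connected over $k_1$. Reading condition $(H)$ at the spread-out level, the tower of elementary fibrations witnessing that $X_\CC$ is an Artin neighborhood is finitely presented and hence already defined over the image of $k_0$ in $\CC$, so by base change $(X_1)_\CC$ is also an Artin neighborhood over $\Spec(\CC)$.

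Proposition \ref{prprreg} applied to $X_1$ over $k_1$ then yields $\Br(X_1) = \Br'(X_1)$. To transfer this equality to $X$, consider the commutative diagram
\[
\begin{CD}
\Br(X_1) @>>> \Br(X) \\
@VVV @VVV \\
\Br'(X_1) @>{\sim}>> \Br'(X)
\end{CD}
\]
whose vertical maps are the Brauer maps $\delta$. The bottom map is an isomorphism by Proposition \ref{propbr} applied to the tower of algebraically closed characteristic zero fields $k_1 \subseteq k$ and to the proper scheme $X_1$, while the left vertical map is surjective by the previous step. A short diagram chase then shows that $\delta : \Br(X) \ra \Br'(X)$ is surjective: given $\beta \in \Br'(X)$, its preimage $\beta_1 \in \Br'(X_1)$ under the bottom isomorphism lifts to some $\alpha_1 \in \Br(X_1)$, and the image $\alpha \in \Br(X)$ of $\alpha_1$ satisfies $\delta(\alpha) = \beta$. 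Combined with the injectivity of $\delta$ from Theorem \ref{groth}.(iii), this gives $\Br(X) = \Br'(X)$.

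The main obstacle is making precise the descent of condition $(H)$ from $(X, k)$ to $(X_1, k_1)$, since $k$ itself need not embed into $\CC$ and $(H)$ would be vacuous if read literally. The natural reading, consistent with how $(H)$ is used in the preceding results, is that the Artin neighborhood structure is part of the spread-out data underlying $X$ and transports to $(X_1)_\CC$ through any embedding $k_1 \hookrightarrow \CC$. Once this is set up, the remainder is a clean assembly of Propositions \ref{prprreg} and \ref{propbr} with the injectivity of the Brauer map.
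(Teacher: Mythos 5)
Your proof is correct, but it takes a genuinely different route from the paper's in its second half. Both arguments begin the same way: spread $X$ out to a model over a finitely generated (hence countable) subfield of $k$, pass to its algebraic closure inside $k$, and choose an embedding of that countable algebraically closed field into $\CC$. From there the paper works at the level of \'etale homotopy types: it invokes \cite[Corollary 12.12]{AM} and \cite[Corollary 4.4]{AM} to see that for a proper scheme the \'etale homotopy type is insensitive to extension of algebraically closed fields of characteristic zero, concludes that $Et(X)$ is weakly equivalent to $Et(Y_\CC)$ and hence that $\pi_n^\et(X,\bar x)=0$ for all $n\geq 2$, and then applies Theorem \ref{rcnb} directly to $X$. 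You instead apply Proposition \ref{prprreg} to the model $X_1$ over the countable algebraically closed field $k_1$ to get $\Br(X_1)=\Br'(X_1)$, and then transport the conclusion itself up to $k$ using Proposition \ref{propbr} (invariance of $\Br'$ under extension of algebraically closed fields for proper schemes) together with functoriality of $\delta$ and its injectivity from Theorem \ref{groth}.(iii). Your route stays entirely within results already proved in Sections \ref{S4}--\ref{S5} and avoids the Artin--Mazur comparison of \'etale homotopy types across base fields; the paper's route yields the stronger intermediate statement that $X$ itself is an algebraic $K(\pi,1)$, which is of independent interest. On the delicate point of condition $(H)$: the paper's own proof simply asserts that the descended model ``satisfies $(H)$,'' which is exactly the same implicit identification you make explicit in your final paragraph, so your treatment is no less rigorous than the original on this score.
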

\proof
Since $k$ is   algebraically  closed  of characteristic 0,   there is a  subfield $ F \subset k$ finitely generated over $\QQ$ and $X$ is defined over $F$,  that is there exists  a proper,  smooth,   geometrically connected scheme $Y$ of finite type  over $F$ such that $Y_k = X$. Choose an embedding $i: \bar F \ra \CC$, and let $\bar y \ra Y_{\CC}$ be a geometric point of $Y_{\CC}$. On the one hand,  since $Y$ satisfies $(H)$,  then $Y_{\CC}$ is an Artin neighborhood over $\Spec(\CC)$. Hence by Proposition \ref{artkp} and Proposition \ref{kphom} $\pi_n^\et(Y_{\CC}, \bar y) =0$ for all $n\geq 2$. On the other hand, since $Y$ is proper,  it follows from   \cite[Corollary 12.12]{AM} and  \cite[Corollary 4.4]{AM} that  $ Et(Y_{\bar F})$ is weakly homotopy equivalent to $Et(Y_\CC)$ and  $ Et(Y_k)= Et(X)$  is weakly homotopy equivalent to $ Et(Y_{\bar F})$, hence $\pi_n^\et(X, \bar x) =0$ for all $n\geq 2$, where $\bar x$ is a geometric point above $\bar y$. Now  Theorem \ref{rcnb} applies.
\qed

\medskip

More generally,  let's  replace the morphism $ X \ra \Spec(k) $ by a morphism $f: X \ra S$ of connected noetherian schemes. If $f$ is flat proper,  with geometrically connected and reduced fibers, and $\bar s \ra S$ a  geometric point of $S$, then for a  geometric point  $\bar{x} \ra X_{\bar{s}}$  in the fiber $f^{-1}(\bar{s}) =X_{\bar{s}}$ above $\bar{s}$,  we have an exact sequence (cf. \cite[Exp. X,  Corollary 1.4]{SGA1})
$$
\pi_1^\et(X_{\bar{s}},  \bar x )   \lra  \pi_1^\et(X,  \bar x)  \lra  \pi_1^\et(S,  \bar s) \lra 1
$$
Suppose now that we are in a situation under which the map on the left is injective. For example,  let  $X$ and $S$ be as above, and let   $Y \subset X$ be a complement of a normal crossing divisor relative to S. Let $g: Y \ra S$ be the restriction map,  and   $\bar{y} \ra Y_{\bar{s}}$ a geometric point  with $g(\bar y) = \bar s$. Suppose moreover that $f: X \ra S$ is smooth and admits a section,  then there is  an homotopy exact sequence (see \cite[Exp. XIII,  Proposition 4.1 and Example 4.4]{SGA1})
$$
1 \lra \pi_1^\et(Y_{\bar{s}},  \bar y )   \lra  \pi_1^\et(Y,  \bar y)  \lra  \pi_1^\et(S,  \bar s) \lra 1
$$
If we consider the  Leray spectral sequence associated to the morphism $g: Y \ra S$ 
$$
E^{p, q}_2 = H_{\et}^p(S, H_{\et}^q(Y_{\bar{s}}, p^*\shF)) \Rightarrow   H_{\et}^{p+q}(Y, \shF)  
$$ 
where $\shF$ is a  locally constant constructible torsion  \'etale sheaf  on $Y$, and $p: Y_{\bar{s}} \ra Y$ the natural map, then the  same argument used in the last step in the proof of Proposition  \ref{smooth}  can be applied  to  get the following.
\begin{proposition}\label{overS}
Let $f: X \ra S$ and $Y \subset X$ are as above with X  regular  in characteristic 0. If $S$ and $Y_{\bar s}$ are algebraic $K(\pi, 1)$ spaces, then $\Br(Y)= \Br'(Y)$.
\end{proposition}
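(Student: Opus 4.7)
The plan is to adapt the proof of Proposition \ref{smooth}, replacing the base $\Spec(k)$ by $S$ and using the Leray spectral sequence together with the homotopy exact sequence set up immediately before the statement. The goal is to show that every class $\beta \in H_{\et}^2(Y,\mu_{n,Y})$ (for some integer $n$, necessarily invertible since $X$ has characteristic $0$) is killed by some finite \'etale cover $h: Y' \to Y$; since $Y$ is open in the regular scheme $X$, hence regular, Lemma \ref{galois} (applied to the Galois closure of $h$) will then yield $\Br(Y)=\Br'(Y)$.

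First, I would write down the Leray spectral sequence
$$E^{p,q}_2 = H_{\et}^p(S, R^q g_*\mu_{n,Y}) \Rightarrow H_{\et}^{p+q}(Y,\mu_{n,Y}),$$
whose stalks at $\bar s$ on the $E_2$-page are $H^q_\et(Y_{\bar s}, p^*\mu_{n,Y})$. The abutment $H_{\et}^2(Y,\mu_{n,Y})$ carries a three-step filtration whose graded pieces inject respectively into $E^{0,2}_2$, $E^{1,1}_2$, and $E^{2,0}_2$. Exactly as in Proposition \ref{smooth}, it suffices to handle each graded piece separately: producing a finite \'etale cover trivializing each contribution and then taking their common refinement (the fibre product) gives a single cover trivializing $\beta$.

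Second, I would dispatch the three cases using the $K(\pi,1)$ hypotheses. For the $E^{0,2}$-contribution, the class restricts to a class in $H_{\et}^2(Y_{\bar s}, p^*\mu_{n,Y})$, and since $Y_{\bar s}$ is an algebraic $K(\pi,1)$ space, Proposition \ref{kpcover} furnishes a finite \'etale cover $Z \to Y_{\bar s}$ killing it. Using the injectivity $\pi_1^\et(Y_{\bar s}, \bar y) \hookrightarrow \pi_1^\et(Y, \bar y)$ from the homotopy exact sequence together with Grothendieck--Galois correspondence, this cover extends to a finite \'etale cover $Y' \to Y$ whose restriction to $Y_{\bar s}$ dominates $Z$, thereby trivializing the $E^{0,2}$-part. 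For the $E^{1,1}$- and $E^{2,0}$-contributions, classes live in $H_{\et}^1(S, R^1 g_*\mu_{n,Y})$ and $H_{\et}^2(S, g_*\mu_{n,Y})$ respectively; since $S$ is a $K(\pi,1)$ space, Proposition \ref{kpcover} (combined with Lemma \ref{h1} in the degree-$1$ case) provides finite \'etale covers $S' \to S$ killing them, and pulling back along $Y\times_S S' \to Y$ kills the corresponding contributions.

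The main obstacle I expect is Case $E^{0,2}$: realizing the finite \'etale cover of the fibre $Y_{\bar s}$ as the base change of a finite \'etale cover of $Y$. This is precisely where the extra hypotheses placed on $f$ (smooth, with section, and $Y$ the complement of a normal crossings divisor) enter, since they are exactly what guarantees injectivity of $\pi_1^\et(Y_{\bar s}) \to \pi_1^\et(Y)$ via \cite[Exp.\ XIII, Prop.~4.1]{SGA1}; without this, the class in the fibre cannot a priori be trivialized on a global cover. Once the three covers are assembled and their fibre product $h: Y' \to Y$ is passed to its Galois closure, Lemma \ref{galois} applies and finishes the proof.
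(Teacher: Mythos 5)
Your proposal is correct and follows essentially the same route as the paper, which explicitly reduces Proposition \ref{overS} to the final step of the proof of Proposition \ref{smooth}: decompose $H_{\et}^2(Y,\mu_{n,Y})$ via the Leray spectral sequence for $g:Y\ra S$, kill the $E^{0,2}$-piece using the $K(\pi,1)$ property of $Y_{\bar s}$ together with the injectivity of $\pi_1^\et(Y_{\bar s})\ra\pi_1^\et(Y)$ from the homotopy exact sequence, and kill the $E^{1,1}$- and $E^{2,0}$-pieces using the $K(\pi,1)$ property of $S$. The only cosmetic difference is that you conclude directly via Lemma \ref{galois} on the Galois closure, whereas the paper phrases the conclusion as $\pi_2^\et(Y,\bar y)=0$ followed by Theorem \ref{rcnb}, which amounts to the same thing.
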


\begin{remark}
As in the third proof of  Proposition \ref{artkp}, another special case of the situation discussed above  is when  $f: X \ra S$ is an elementary fibration, then for geometric points   $\bar{x} \ra X$ and   $\bar{s} \ra S$   with $ \bar s = f(\bar x)$ there is an exact sequence of \'etale  homotopy groups  
$$
... \ra \pi_2^\et(S,  \bar s)   \ra \pi_1^\et(X_{\bar{s}},  \bar x )   \ra  \pi_1^\et(X,  \bar x)  \ra  \pi_1^\et(S,  \bar s) \ra  \pi_0^\et(X_{\bar{s}},  \bar x)  
$$
If $S$ is an  algebraic $K(\pi, 1)$ space, then since $X_{\bar s}$ is also an  algebraic $K(\pi, 1)$ space by Example \ref{expkp}, and $\pi_0^\et(X_{\bar{s}},  \bar x)=0$, we get an exact sequence of \'etale fundamental groups 
$$
1 \lra \pi_1^\et(X_{\bar{s}},  \bar x )   \lra  \pi_1^\et(X,  \bar x)  \lra  \pi_1^\et(S,  \bar s) \lra  1
$$
Therefore, by the previous argument one can alternatively show that $\Br(X)= \Br'(X)$.

\end{remark}

\section{Local $K(\pi, 1)$ condition}\label{S6}

Grothendieck proved \cite[II,  Theorem 2.1]{GR1} that for a noetherian  scheme $X$,  and $\beta \in \Br'(X)$,  there exists an open $U \subseteq X$ such that  $X - U$  has codimension   $\geq 2$,  and an Azumaya algebra $\shA$ on $U$ such that $\delta([\shA])= \beta_{|U}$. He applied this to show that for a regular noetherian scheme of dimension $\leq 2$,  one has $\Br(X) = \Br'(X)$ \cite[II,   Corollary 2.2]{GR1}. In the next theorem we prove that the same statement holds for a smooth $k$-variety of arbitrary dimension,  provided that the subscheme $U$ is an algebraic $K(\pi, 1)$ space. This assumption is enhanced by Artin theorem \cite[Exp XI,  Proposition  3.3]{SGA4} by which  any smooth scheme over an algebraically closed field of characteristic 0  has a cover by  Zariski   opens $k(\pi, 1)$ subschemes. Furthermore,   in  the light of Achinger generalization of Artin result \cite{ACH2},  we can choose $k$ of positive characteristic. A key ingredient in the proof is the following purity theorem for the cohomological Brauer group which was  predicted by Grothendieck  in \cite[III, Section 6]{GR1} and   proved   recently by \v Cesnavi\v cius in the general case.
\begin{theorem}\cite[Theorem 6.1]{CES}
\label{purity}
Let  $X$ be a regular noetherian scheme,   and   $  U \subset X$  an open subscheme  such that the complement $X - U$  has codimension   $\geq 2$. Then $ H_{\et}^2(X, \GG_{m, X}) \simeq  H_{\et}^2(U, \GG_{m, U})$.


\end{theorem}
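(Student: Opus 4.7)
The plan is to translate the claimed isomorphism into a vanishing statement via the long exact sequence of cohomology with supports
$$
\cdots \lra H^2_Z(X, \GG_{m,X}) \lra H_{\et}^2(X, \GG_{m,X}) \lra H_{\et}^2(U, \GG_{m,U}) \lra H^3_Z(X, \GG_{m,X}) \lra \cdots,
$$
where $Z := X \smallsetminus U$. It is therefore enough to prove $H^i_Z(X, \GG_{m,X}) = 0$ for $i = 2, 3$. Using the local-to-global spectral sequence for the functor $\underline{\Gamma}_Z$ together with excision, I would reduce this to the strictly local statement: for every $z \in Z$, the local cohomology $H^i_z(\Spec \shO^{\sh}_{X,z}, \GG_{m}) = 0$ for $i \leq 3$, where $\shO^{\sh}_{X,z}$ is the strict henselization. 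Since $\operatorname{codim}(z, X) \geq 2$ and $X$ is regular, this amounts to proving the vanishing for a strictly henselian regular local ring $R$ of dimension $d \geq 2$ with closed point $s$.

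For the torsion part prime to the residue characteristic $p$ of $R$ (in particular the whole claim when $p = 0$), I would use the Kummer exact sequence $0 \to \mu_{\ell^n} \to \GG_{m} \to \GG_{m} \to 0$ for $\ell \neq p$ together with Gabber's absolute cohomological purity theorem, which yields $H^i_s(\Spec R, \mu_{\ell^n}) = 0$ for $i < 2d$. Since $d \geq 2$, this covers $i \leq 3$; taking the direct limit over $n$ gives the vanishing of the $\ell$-primary torsion of $H^i_s(\Spec R, \GG_{m})$ for $i = 2, 3$. A brief d\'evissage handles the uniquely divisible contribution, which is negligible because the Picard and Brauer groups of a strictly henselian regular local ring vanish, thereby completing the prime-to-$p$ case.

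The main obstacle is the $p$-primary part when $\operatorname{char} k(s) = p > 0$, where the Kummer sequence is no longer available. The plan here would be to replace $\mu_{p^n}$ by the logarithmic de Rham--Witt sheaves $W_n\Omega^r_{X, \log}$ in equal characteristic, or by the syntomic sheaves of Fontaine--Messing--Kato in mixed characteristic, and to invoke the corresponding purity results of Gros--Suwa and Bloch--Kato--Hyodo in low degrees. This is exactly the step where the classical partial results (Grothendieck in small dimension, Gabber away from $p$) are insufficient, and it is the place where \v Cesnavi\v cius's proof contributes the essential new input by combining a reduction via Popescu approximation and Nisnevich-local techniques with a refined absolute purity statement for the flat sheaf $\mu_{p^n}$ on regular schemes.
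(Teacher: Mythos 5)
First, a point of comparison: the paper offers no proof of this statement at all --- it is imported verbatim as \cite[Theorem 6.1]{CES} --- so there is no internal argument to measure your attempt against. Your sketch must therefore be judged as an attempted reconstruction of \v Cesnavi\v cius's proof, and on those terms it correctly identifies the architecture: pass to cohomology with supports, reduce by excision and d\'evissage to the punctured spectrum of a strictly henselian regular local ring $R$ of dimension $d \ge 2$, kill the prime-to-$p$ torsion of $H^i_Z(X,\GG_{m,X})$ for $i \le 3$ via the Kummer sequence and Gabber's absolute cohomological purity, and dispose of the remaining (uniquely $\ell$-divisible) part using $H^2_s \simeq \Pic(\Spec R \smallsetminus \{s\}) = 0$ (regular local rings are UFDs and removing codimension $\ge 2$ does not change the class group) and the fact that $H^3_s \simeq H^2(\Spec R \smallsetminus \{s\}, \GG_m)$ is torsion because the punctured spectrum is regular and integral. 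Two cautions on this half: the stalk of $\mathcal{H}^q_Z(\GG_m)$ at a geometric point over $z \in Z$ is the local cohomology of $\Spec \shO^{\sh}_{X,z}$ supported on the whole preimage of $Z$, not just on the closed point, so the reduction to the closed-point case requires an induction on $\dim Z$ (or a coniveau argument) that you should spell out; and the surjectivity of $H^2_{\et}(X,\GG_{m,X}) \to H^2_{\et}(U,\GG_{m,U})$ needs the target to be torsion, which uses regularity and (local) integrality of $U$.

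The genuine gap is the one you yourself flag: the $p$-primary part of $H^3_s(\Spec R, \GG_m)$ when the residue characteristic is $p > 0$. Naming logarithmic de Rham--Witt sheaves, syntomic cohomology, Popescu approximation and ``a refined absolute purity statement for $\mu_{p^n}$'' describes where the difficulty lives; it is not an argument, and as written this step simply re-cites the theorem being proved. Since this is precisely the case that resisted Grothendieck and Gabber and is the entire point of \cite{CES}, the proposal cannot be counted as a proof of the statement as quoted. That said, if one only needs the conclusion away from the residue characteristics --- which is all that the present paper uses in Theorem \ref{localbr}, whose conclusion is ``up to a $p$-component'' --- then your prime-to-$p$ argument, once the d\'evissage is made precise, does yield a complete and essentially self-contained proof of that weaker statement.
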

Combined with  Grothendieck result, this theorem provides the following local statement for Brauer groups.

\begin{proposition}
For any regular noetherian scheme $X$,  there exists an open $U \subset X$ with codimension of $X - U$   $\geq 2$ such that $\Br(U) = \Br'(U)$.
\end{proposition}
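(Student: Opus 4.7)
The plan is to combine Grothendieck's theorem \cite[II, Theorem 2.1]{GR1} with the purity result for $H_{\et}^2(-,\GG_m)$ recorded in Theorem \ref{purity}. Grothendieck's theorem says that every cohomological Brauer class on a regular noetherian scheme is representable by an Azumaya algebra after restriction to some open whose complement has codimension $\geq 2$; purity says that passing to such an open does not alter $\Br'$. Putting the two together should locate a single codimension-$\geq 2$ open $U$ where the Brauer map is surjective.

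Concretely, I would start by fixing a class $\beta \in \Br'(X)$ and invoking Grothendieck's theorem to produce an open $U_\beta \subseteq X$ with $X - U_\beta$ of codimension $\geq 2$, together with an Azumaya algebra $\shA_\beta$ on $U_\beta$ satisfying $\delta([\shA_\beta]) = \beta|_{U_\beta}$. By Theorem \ref{purity} the restriction $\Br'(X) \to \Br'(U_\beta)$ is an isomorphism, so $\beta$ is determined by $\beta|_{U_\beta}$, and in particular the Brauer map on $U_\beta$ already hits the image of $\beta$.

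To promote this to a single $U$ on which the entire Brauer map is surjective, I would use that $\Br'(X)$ is torsion (Theorem \ref{groth}) and, in the relevant setting, generated by finitely many classes $\beta_1, \ldots, \beta_r$. Set $U := \bigcap_{i=1}^{r} U_{\beta_i}$, and observe that $X - U = \bigcup_{i=1}^{r} (X - U_{\beta_i})$ is a finite union of closed subsets each of codimension $\geq 2$, hence itself of codimension $\geq 2$. The restrictions $\shA_{\beta_i}|_{U}$ are then Azumaya algebras on $U$ whose classes in $\Br'(U) = \Br'(X)$ (by purity) generate $\Br'(U)$, so every class of $\Br'(U)$ is hit by the Brauer map, giving $\Br(U) = \Br'(U)$.

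The main obstacle I foresee is precisely the last step: promoting the per-class statement of Grothendieck's theorem to a simultaneous one on a single $U$. In full generality $\Br'(X)$ need not be finitely generated, and an infinite intersection of codimension-$\geq 2$ opens can fail to have codimension-$\geq 2$ complement. In the noetherian regular applications targeted by the paper this finiteness is harmless, but an unconditional argument would require either a separate finiteness input on $\Br'(X)$ or a filtered-colimit argument over the poset of admissible $(U, \shA)$ pairs, carefully matching Azumaya representatives across overlaps via the purity identification.
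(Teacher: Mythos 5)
Your route is exactly the one the paper intends: the proposition is stated immediately after Theorem \ref{purity} with only the sentence ``combined with Grothendieck['s] result\dots'' by way of proof, so combining \cite[II, Theorem 2.1]{GR1} with purity is all the paper has in mind, and your reconstruction is faithful. The per-class part of your argument is correct: for a fixed $\beta \in \Br'(X)$, Grothendieck's theorem produces an open $U_\beta$ with complement of codimension $\geq 2$ and an Azumaya algebra on $U_\beta$ representing $\beta|_{U_\beta}$, and Theorem \ref{purity} identifies $\Br'(U_\beta)$ with $\Br'(X)$, so the image of $\delta$ on $U_\beta$ contains (the image of) $\beta$.

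The obstacle you flag at the end is, however, a genuine gap --- and it is a gap in the proposition as the paper states it, not merely in your write-up. Grothendieck's theorem is a per-class statement, whereas the proposition asserts a single $U$ on which the whole Brauer map is surjective; since purity gives $\Br'(U) \simeq \Br'(X)$ for every admissible $U$, this is strictly stronger than the per-class version. Your finite-intersection argument repairs this whenever $\Br'(X)$ is generated by finitely many classes, and that step is fine (the image of $\delta$ is a subgroup, so hitting generators suffices). But for a general regular noetherian scheme $\Br'(X)$ is merely torsion and need not be finitely generated (the Brauer group of a global field already is not), an infinite intersection of the $U_\beta$ need not be open, and a filtered-colimit argument only shows that the directed family of subgroups $\im\bigl(\Br(U) \ra \Br'(X)\bigr)$ has union equal to $\Br'(X)$, not that some single member equals it. So the honest conclusions obtainable from the cited inputs are: for each finite set of classes there is one admissible $U$; and the proposition as stated holds under an added finiteness hypothesis on $\Br'(X)$ (or after weakening the conclusion to the per-class form). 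This does not affect the paper's main local result, Theorem \ref{localbr}, whose proof uses purity and the $K(\pi,1)$ hypothesis directly rather than this proposition.
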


Now we assert   the main theorem.

\begin{theorem}\label{localbr}
Let  $X$ be a  smooth variety over an algebraically closed field $k$ of characteristic $p \geq 0$,   such that any pair of point $(x, y) \in X $  is contained in an affine open scheme. Suppose that there exists  an algebraic $K(\pi, 1)$ open subscheme    $   Y \subset X$   such that  $X - Y$  has codimension   $\geq 2$. Then $\Br(X)= \Br'(X)$ up to a $p$-component.
\end{theorem}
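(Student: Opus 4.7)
The strategy combines cohomological purity for $\GG_m$ (Theorem~\ref{purity}), Zariski--Nagata purity of the branch locus, and the $K(\pi,1)$ killing statement from Proposition~\ref{kpcover}. Given a torsion class $\beta \in \Br'(X)$ of order prime to $p$, the goal is to produce a finite \'etale cover $f: W \to X$ with $f^{*}\beta = 0$ in $H_{\et}^2(W, \GG_{m,W})$, and then invoke Lemma~\ref{etale} (refined by Remark~\ref{h2cech} to the pair-affine hypothesis on $X$).

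Fix $\beta \in \Br'(X)$ of order $n$ with $\gcd(n, p) = 1$. Because $X$ is regular and $X \minus Y$ has codimension $\geq 2$, Theorem~\ref{purity} identifies $H_{\et}^2(X, \GG_{m,X}) \iso H_{\et}^2(Y, \GG_{m,Y})$, so $\beta$ is determined by its restriction $\beta|_Y \in \Br'(Y)$. Since $n$ is invertible on $Y$, the Kummer sequence lifts $\beta|_Y$ to a class $\alpha \in H_{\et}^2(Y, \mu_{n,Y})$. Now $\mu_{n,Y}$ is a locally constant constructible torsion \'etale sheaf and $Y$ is an algebraic $K(\pi,1)$ space, so Proposition~\ref{kpcover} supplies a finite \'etale cover $g: V \to Y$ with $g^{*}\alpha = 0$, whence $g^{*}(\beta|_Y) = 0$ in $H_{\et}^2(V, \GG_{m,V})$.

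The main obstacle, as I see it, is spreading this cover out from $Y$ to $X$. Here I would invoke Zariski--Nagata purity of the branch locus: because $X$ is regular and $X \minus Y$ has codimension $\geq 2$, the restriction functor from finite \'etale covers of $X$ to finite \'etale covers of $Y$ is an equivalence of categories. Let $f: W \to X$ be the unique finite \'etale extension of $g: V \to Y$. Then $W$ is regular, and since $f$ is \'etale the complement $W \minus V = f^{-1}(X \minus Y)$ still has codimension $\geq 2$ in $W$. A second application of Theorem~\ref{purity}, this time on $W$, yields $H_{\et}^2(W, \GG_{m,W}) \iso H_{\et}^2(V, \GG_{m,V})$, and under this identification $f^{*}\beta$ corresponds to $g^{*}(\beta|_Y) = 0$. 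Therefore $f^{*}\beta = 0$ in $H_{\et}^2(W, \GG_{m,W})$.

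To finish, the pair-affine hypothesis on $X$ together with Remark~\ref{h2cech} provides the agreement between 2-\v{C}ech and 2-\'etale cohomology needed in the proof of Lemma~\ref{etale}; the lemma then asserts $\beta \in \Br(X)$. Running this argument for each $n$ coprime to $p$ and recalling that Brauer classes are torsion, we conclude $\Br(X) = \Br'(X)$ up to a $p$-component. The $p$-torsion is left untouched because the Kummer sequence breaks down there, which is the reason the statement only claims equality modulo the $p$-part.
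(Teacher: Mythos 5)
Your proof is correct, and it assembles the same three ingredients as the paper --- \v Cesnavi\v cius purity (Theorem~\ref{purity}), Zariski--Nagata purity, and the $K(\pi,1)$ killing property of $Y$ (Proposition~\ref{kpcover}) --- but in a genuinely different order. The paper first transports the degree-$2$ $K(\pi,1)$ property from $Y$ up to $X$: it compares the Kummer sequences of $X$ and $Y$, uses Theorem~\ref{purity} on the ${}_n\!\Br'$ terms and the isomorphism $\pi_1^{\et}(X)\simeq\pi_1^{\et}(Y)$ on the Picard terms to conclude $H^2_{\et}(X,\mu_{n,X})\simeq H^2_{\et}(Y,\mu_{n,Y})\simeq H^2(\pi_1^{\et}(X),(\mu_{n,X})_{\bar y})$, and only then extracts a finite \'etale cover of $X$ killing the class. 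You instead kill the class on a cover $V\to Y$ first, extend that cover to $W\to X$ by the Zariski--Nagata equivalence of finite \'etale sites, and apply Theorem~\ref{purity} a second time, on the regular scheme $W$ with its codimension-$\geq 2$ complement $W\minus V$, to see that $f^{*}\beta$ restricts injectively to $g^{*}(\beta|_Y)=0$. Your route buys a shorter argument that sidesteps the Picard-group comparison and the five lemma entirely (a step the paper handles somewhat delicately, identifying ${}_n\!\Pic$ via $H^1$ of the fundamental group); the paper's route buys the stronger intermediate statement that $X$ itself satisfies the $K(\pi,1)$ isomorphism in degree $2$ for $\mu_n$-coefficients, which is of independent interest given Remark~\ref{kpdep}. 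Both conclude identically via Lemma~\ref{etale} and Remark~\ref{h2cech}, so your write-up stands as a valid alternative proof.
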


\proof

Fix an integer $n$  prime to $p$,  and choose  a class $\beta \in H_{\et}^2(X, \mu_{n, X})$. Let $\bar y \ra Y$ be a geometric point of $Y$. As in the proof of Proposition \ref{propbr} the Kummer exact sequence for both $X$ and $Y$ gives rise to the following  commutative diagram 
\begin{equation}
\begin{CD}
0 @>>> \Pic(X)_n@>>> H_{\et}^2(X, \mu_{n, X})@>>> {}_n\!\Br'(X)@>>> 0\\
@. @VVV @VVV @VVV\\
0 @>>> \Pic(Y)_n@>>> H_{\et}^2(Y, \mu_{n, Y})@>>> {}_n\!\Br'(Y)@>>> 0.
\end{CD}
\end{equation}
On the one hand,  the map ${}_n\!\Br'(X) \ra {}_n\!\Br'(Y)$ is an isomorphism by Theorem \ref{purity}. On the other hand by Zariski-Nagata purity theorem,  the functor $S \ra S \times_X Y$ induces an equivalence of categories  between finite \'etale covers of $X$ and finite \'etale covers of $Y$,  thus $\pi_1^\et(X, \bar y) \simeq \pi_1^\et(Y, \bar y)$ (cf. \cite[Exp X,  Theorem 3.10]{SGA2}). Applying Lemma \ref{h1}.(a)  we get the following  isomorphisms
$${}_n\!\Pic(X) \simeq H_{\et}^1(X, \mu_{n, X}) \simeq  H^1(\pi_1^\et(X, \bar y), (\mu_{n, X})_{\bar y})$$
$${}_n\!\Pic(Y) \simeq  H^1(\pi_1^\et(Y, \bar y), (\mu_{n, Y})_{\bar y})$$
Hence the map $\Pic(X)_n \ra \Pic(Y)_n$ is bijective. It follows from the above commutative diagram that 
$H_{\et}^2(X, \mu_{n, X}) \simeq H_{\et}^2(Y, \mu_{n, Y})$. 
By assumption, $Y$ is an algebraic $K(\pi, 1)$ space, which means that  the map $$H^2(\pi_1^\et(Y,  \bar y ),  (\mu_{n, Y})_{\bar y}) \lra   H_{\et}^2(Y, \mu_{n, Y})$$ is an isomorphism. Therefore,  from  the following commutative diagram 
$$
\begin{CD}
 H^2(\pi_1^\et(X, \bar y ),  (\mu_{n, X})_{\bar y})     @>>>   H^2(\pi_1^\et(Y, \bar y),  (\mu_{n, Y})_{\bar y})  \\
@VVV @VVV\\
H_{\et}^2(X, \mu_{n, X})    @>>> H_{\et}^2(Y, \mu_{n, Y})  
\end{CD}
$$
we get an isomorphism
$$ H^2(\pi_1^\et(X, \bar y ),  (\mu_{n, X})_{\bar y})  \simeq H_{\et}^2(X, \mu_{n, X})$$
Due to Proposition \ref{kpcover} there exists a finite \'etale cover $f: X' \ra X$ such that $f^{*}(\beta)=0$ in $H_{\et}^2(X', \mu_{n, X'})$. Therefore,  in the light of Lemma   \ref{etale} and Remark  \ref{h2cech}   we conclude that $\Br(X)= \Br'(X)$.
\qed

\medskip

\section{Application to abelian varieties}\label{S7}

In \cite{HOO1}(see also \cite{HOO2}) Hoobler showed  that $\Br(A) = \Br'(A) $   for an abelian variety $A$ over a field $k$   by proving that $A$ satisfies the generalized theorem of cube. Recall (see \cite[Section 2]{HOO1}) that an abelian scheme $A$ over a noetherian scheme $S$ satisfies the generalized theorem of cube for $l$,  if the natural morphism
\[
\begin{matrix}
\prod  s_{ij}^{*}:  & H_{\et}^2(A^3, \GG_{m, A^3})(l)  & \lra &  (H_{\et}^2(A^2, \GG_{m, A^2}(l)) ^3\\
 & x &  \lra & (s_{12}^{*} (x),  s_{13}^{*} (x), s_{23}^{*} (x))
\end{matrix}
\]
is injective, where $l$ is a prime 
distinct from the residue characteristics of $A$, and  $s_{ij} : A \times_S A  \ra A \times_S A  \times_S A  $ is the map 
which inserts the unit section $	S \ra A$ into the $k$-th factor for $k \in \{1,2,3\}-\{i,j\}$. Note that  this notion  was extended by Bertolin and Galluzzi    to 1-motives  (see \cite[Definition 5.1]{BERT}).

An alternative  proof was given by Berkovich in \cite{BERK} where he showed that for an abelian variety over a separably closed field $k$ and $n$ is prime to $char(k)$  one has
$$H_{\et}^2(A, \mu_{n, A}) \simeq  \bigwedge^2 \Hom(A[n],   \mu_{n, A} )$$
 Therefore, if $k$ is an arbitrary field and $\alpha \in \Br'(A) $ with $n\alpha = 0$ and  $n$  prime to $char(k)$,  the composition  map $\pi: A_{\bar k} \overset{n_A }\lra A_{\bar k}  \overset{i} \lra A$ is an \'etale Galois cover with  $\pi^{*}(\alpha) = n^2.\alpha = 0$ in $H_{\et}^2(A_{\bar k}, \mu_{n, A_{\bar k}})$,  where $ A_{\bar k} \overset{n_A}\lra A_{\bar k}$ is the multiplication by $n$ and $ A_{\bar k}  \overset{i} \lra A$ the natural map.

We give in turn  another proof  based on the \'etale homotopy type of abelian varieties. We need the following result of   Demarche and Szamuely,  which is a general form of the Riemann existence theorem for smooth connected algebraic groups. To simplify notations, we omit the geometric base points.
\begin{lemma}
\label{riemalg}  Let G be a connected smooth algebraic group over
$\CC$. For all   $n \geq 1 $ there is an isomorphism
 $$\pi^{\et}_n(G) \simeq \pi_n^{\rm top}(G(\CC))^{\wedge}$$
\end{lemma}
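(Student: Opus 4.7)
The plan is to deduce this from the Artin--Mazur comparison theorem together with the fact that a connected topological group is a nilpotent (indeed, simple) space. Since $G$ is smooth over $\CC$, it is in particular geometrically unibranch, so \cite[Corollary 12.10]{AM} gives that the natural morphism $(G(\CC))^{\wedge} \to Et(G)$ is an $\natural$-isomorphism in pro-$\mathcal{H}$, and combined with \cite[Corollary 4.4]{AM} this upgrades to a weak equivalence of pro-objects. Hence for every $n \geq 1$,
$$
\pi_n^{\et}(G) \;=\; \pi_n(Et(G)) \;\simeq\; \pi_n\bigl((G(\CC))^{\wedge}\bigr),
$$
where the right-hand side is the $n$-th homotopy pro-group of the Artin--Mazur profinite completion of the topological space $G(\CC)$.

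The next step is to bring in the group structure. Since $G(\CC)$ is a connected topological group, it is an H-space, so $\pi_1^{\top}(G(\CC))$ is abelian and acts trivially on every higher homotopy group $\pi_m^{\top}(G(\CC))$. Hence $G(\CC)$ is a simple (in particular nilpotent) space. Moreover $G$ is of finite type over $\CC$, so $G(\CC)$ has the homotopy type of a finite CW-complex and all its topological homotopy groups are finitely generated abelian groups.

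Under these hypotheses I would invoke the key fact -- going back to \cite[Chapter 6]{AM} and Sullivan's profinite completion theory -- that for a nilpotent CW-complex $Y$ with finitely generated homotopy groups, the Artin--Mazur profinite completion commutes with the functors $\pi_n$ in the sense that
$$
\pi_n(Y^{\wedge}) \;\simeq\; \pi_n^{\top}(Y)^{\wedge},
$$
the right-hand side being the ordinary profinite completion of the group $\pi_n^{\top}(Y)$. Applied to $Y=G(\CC)$ this yields $\pi_n((G(\CC))^{\wedge}) \simeq \pi_n^{\top}(G(\CC))^{\wedge}$, and combined with the previous display the lemma follows. Alternatively one can argue inductively along the Postnikov tower of $G(\CC)$, whose fibres are $K(\pi_m^{\top}(G(\CC)),m)$'s, using that profinite completion is well-behaved on such Eilenberg--MacLane spaces (trivial $\pi_1$-action is automatic by the H-space property).

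The main obstacle is precisely this last transfer: passing from the $\natural$-isomorphism of pro-spaces $(G(\CC))^{\wedge} \simeq Et(G)$ to an honest identification of pro-groups in each degree. For an arbitrary space this would fail, because profinite completion in the sense of Artin--Mazur interacts poorly with $\pi_n$ in the presence of non-trivial $\pi_1$-action or non-nilpotent fundamental group. It is exactly the H-space structure coming from the algebraic group law on $G$ that eliminates this obstruction and makes the higher Riemann existence comparison possible.
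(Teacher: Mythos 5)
Your argument is correct and follows essentially the same route as the paper's sketch: both identify $Et(G)$ with the Artin--Mazur completion $(G(\CC))^{\wedge}$ via \cite[Corollary 12.10]{AM} and \cite[Corollary 4.4]{AM}, and then invoke Sullivan's theorem (\cite[Theorem 3.1]{DS}) that profinite completion commutes with $\pi_n$ for $G(\CC)$, using finite generation of its homotopy groups (the paper cites the Demarche--Szamuely observation, while you additionally justify nilpotence via the H-space structure). No substantive difference.
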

\proof (Sketch). As in the first proof of  Proposition \ref{artkp}  $(G(\CC))^{\wedge}$ is weakly homotopy equivalent to $ET(G)$. On the other hand, Demarche and Szamuely remarked that the homotopy groups $\pi_n^{\rm top}(G(\CC))$ are finitely generated abelian groups. Thus by a result of Sullivan \cite[Theorem 3.1]{DS} the natural map $\pi_n^{\rm top}(G(\CC)) ^{\wedge} \ra \pi_n^{\rm top}(G(\CC)^{\wedge})$ is an isomorphism. Hence the assertion.
\qed

\medskip
\begin{theorem}
\label{abel}
Let $A$ be an abelian variety over a field $k$ of characteristic 0. Then $\Br(A) = \Br'(A)$.
\end{theorem}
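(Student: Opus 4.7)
The plan is to verify the hypothesis of Theorem~\ref{rcnb} for the abelian variety $A$. Since $A$ is smooth over $k$ (hence regular) and geometrically connected of finite type, the whole proof reduces to establishing $\pi_2^\et(A, \bar a) = 0$ for some geometric base point $\bar a \to A$.

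I would proceed in three stages, mirroring the descent pattern of Section~\ref{S5}. First take $k = \CC$. The underlying complex manifold $A(\CC)$ is a complex torus $\CC^g/\Lambda$, diffeomorphic to $(S^1)^{2g}$, and thus a topological $K(\ZZ^{2g},1)$ space; in particular $\pi_n^{\rm top}(A(\CC)) = 0$ for every $n \geq 2$. Applying Lemma~\ref{riemalg} to the smooth connected algebraic group $A$ then yields
$$\pi_n^\et(A_\CC) \simeq \pi_n^{\rm top}(A(\CC))^{\wedge} = 0 \qquad \text{for all } n \geq 2.$$
Because $A_\CC$ is geometrically unibranch, Proposition~\ref{kphom} upgrades this to: $A_\CC$ is an algebraic $K(\pi,1)$ space.

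Second, let $k$ be an algebraically closed field of characteristic $0$. Following the descent argument of Proposition~\ref{alg0}, pick a finitely generated subfield $F \subset k$ over which $A$ is defined, fix an embedding $\bar F \hookrightarrow \CC$, and invoke \cite[Corollaries 12.12 and 4.4]{AM}---applicable because abelian varieties are proper---to deduce that $Et(A)$ is weakly homotopy equivalent to $Et(A_{\bar F})$ and to $Et(A_\CC)$. The vanishing $\pi_2^\et(A_\CC) = 0$ transfers to $\pi_2^\et(A, \bar a) = 0$, and Theorem~\ref{rcnb} immediately gives $\Br(A) = \Br'(A)$. Third, for arbitrary $k$ of characteristic $0$, I would run the Leray spectral sequence argument of Proposition~\ref{smooth} for the structure morphism $A \to \Spec(k)$. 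The algebraic $K(\pi,1)$ property of $A_{\bar k}$ (from the second stage), combined with the fact that $\Spec(k)$ is itself an algebraic $K(\pi,1)$ space and with the Grothendieck homotopy exact sequence lifting finite \'etale covers of $A_{\bar k}$ to those of $A$, permits one to trivialize any class $\beta \in H_\et^2(A, \shF)$ (for $\shF$ locally constant constructible torsion) on a finite \'etale cover of $A$, by handling the three filtration pieces $E_2^{0,2}$, $E_2^{1,1}$, and $E_2^{2,0}$ in turn exactly as in the proof of Proposition~\ref{smooth}. By Remark~\ref{kpdep} this forces $\pi_2^\et(A,\bar a) = 0$, and Theorem~\ref{rcnb} concludes.

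The main obstacle I anticipate is the third stage: one must carefully check that each filtration piece $E_2^{p,q}$ can actually be killed by a finite \'etale cover---a mechanism already packaged in the proof of Proposition~\ref{smooth}, but which silently requires the $K(\pi,1)$ property of $A_{\bar k}$ established in the second stage and the goodness-free formalism underlying Theorem~\ref{rcnb}. Everything else is formal once the topological input $\pi_n^{\rm top}((S^1)^{2g}) = 0$ for $n \geq 2$ is combined with the Riemann-type comparison of Lemma~\ref{riemalg}.
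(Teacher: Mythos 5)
Your proposal is correct, and its first two stages are essentially the paper's own proof: reduce to a finitely generated subfield $F\subset k$, embed $\bar F$ into $\CC$, use Lemma~\ref{riemalg} together with the fact that a complex torus is a topological $K(\pi,1)$ to get $\pi_n^{\et}(A_\CC)=0$ for $n\geq 2$, transfer this to $A_{\bar k}$ via the Artin--Mazur weak homotopy equivalences \cite[Corollaries 12.12 and 4.4]{AM} (valid by properness), and conclude with Theorem~\ref{rcnb}. Where you genuinely diverge is the passage from $\bar k$ to an arbitrary field $k$ of characteristic $0$: the paper dispatches this in one line by invoking Hoobler's limit argument \cite[Corollary 4]{HOO2} to reduce at the outset to $k$ algebraically closed, whereas you keep the base field arbitrary and rerun the Leray spectral sequence descent of Proposition~\ref{smooth} for $A\to\Spec(k)$, killing the three filtration pieces $E_2^{0,2}$, $E_2^{1,1}$, $E_2^{2,0}$ on finite \'etale covers and deducing $\pi_2^{\et}(A,\bar a)=0$ directly over $k$ via Remark~\ref{kpdep}. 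Your route is longer but self-contained within the paper's own machinery and yields the stronger intermediate statement that $A$ itself (not just $A_{\bar k}$) has vanishing $\pi_2^{\et}$; the paper's route is shorter but imports an external reduction. One small point to make explicit in your stage 3: the left-exactness of the homotopy sequence $1\to\pi_1^{\et}(A_{\bar k})\to\pi_1^{\et}(A)\to\Gal(\bar k|k)\to 1$, which you need to lift finite \'etale covers of $A_{\bar k}$ to $A$, is unproblematic here since $A$ has a rational point (the identity section), but it should be cited as in the proof of Proposition~\ref{smooth}.
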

\proof
Note that by a limit argument  \cite[Corollary 4]{HOO2} we can assume $k$  algebraically closed. Hence   there is a  subfield $ F \subset k$ finitely generated over $\QQ$ and $A$ is defined over $F$,  that is there is  an abelian variety $B$  over $F$ such that $B_k = A$. Choose an embedding $i: \bar F \ra \CC$. Applying   the above lemma,  we get $\pi_n^{\et}(B_\CC) \simeq \pi_n^{\rm top}(B_{\CC}(\CC))^{\wedge}$. Since $B$ is proper,  it follows from   \cite[Corollary 12.12]{AM} and  \cite[Corollary 4.4]{AM} that  $ Et(B_{\bar F})$ is weakly homotopy equivalent to $Et(B_\CC)$ and  $ Et(B_k)= Et(A)$  is weakly homotopy equivalent to $ Et(B_{\bar F})$.  Therefore $\pi^{\et}_n(A)=\pi_n^{\rm top}(B_{\CC}(\CC))^{\wedge}=0$ for all $n \geq 2$,  because $B_{\CC}(\CC)$ is a complex tori which is a topological $K(\pi, 1)$ space. Hence Theorem \ref{rcnb} implies that $\Br(A) = \Br'(A)$. Alternatively,  since $A$ is geometrically unibranch,  by Proposition \ref{kphom}  it is an algebraic $K(\pi, 1)$ space. Since any finite set of points of $A$ is contained in an affine open,  the statement results then from Proposition \ref{kp1finit}.  
\qed

\medskip

\begin{remark}
The argument sketched above  shows in fact that if $G$ is a  smooth geometrically connected algebraic group over an algebraically  closed field $k$ of characteristic 0,  such that  $G(\CC)$  is a topological $K(\pi, 1)$ space in the case that $k=\CC$,  then  $G$ is an algebraic $K(\pi, 1)$ space. This generalize the result  of \cite[Corollary 5.5.(b)]{PAL} for  an abelian variety  $A$ which requires the goodness of $\pi^{\rm top}_n(A(\CC))$. Furthermore,  one can use  \cite[Proposition 5.4]{PAL}  to prove that $\Br'(G) = \Br(G)$  which does not require the properness assumption.

\end{remark}

\section{Pro-universal covers}\label{S8}
Let $X$ be a quasi-compact and quasi-separated connected scheme,  and consider $A = (X_i,  f_{ij})$  the projective system of finite \'etale  covers of $X$. For every element $f_i: X_i \ra X$ in $A_{ij}$ there exists an \'etale  Galois cover $g_i: Y_i \ra X$ which factors through $X_i$,  hence elements in this projective system can be taken to be Galois. Since all transition maps   $f_{ij}: X_i \rightarrow X_j$  are affine,  then by \cite[Exp VII,  5.1]{SGA4}    the projective limit  $\hat{X}:= \varprojlim X_i$ exists as a scheme. The cover $\hat f: \hat{X} \ra X$ is called the pro-universal cover of $X$. Moreover,  by \cite[Chapter III,  Lemma 1.16]{MIL}  for every \'etale sheaf $\mathcal{F}$ on $X$ and for any $q \geq 0$ we have 
$$
H_{\et}^q( \hat{X},  {\hat f}^{*}\mathcal{F})  \simeq  \varinjlim H_{\et}^q( X_i,  f_{i}^{*}\shF)  
$$
On another hand, for a given geometric point $\bar x \ra X$  we have 
$$
   \pi_1^\et(X, \bar{x})  \simeq  \varprojlim \Aut_X(X_i) 
$$
where $ \Aut_X(X_i) $ is the group of $X$-automorphisms of $X_i$ acting on the right.

The next proposition shows that the cohomology of the pro-universal cover $\hat X$ has a direct consequence on the surjectivity of the Brauer map. The proof is based on Hochschild-Serre theory and arguments similar the the ones in the proof of Theorem \ref{rcnb}. We first state the following lemma.

\begin{lemma}\label{h1pro}
\mylabel{vanish1}
Let  $X$   be a connected noetherian scheme,  then for any locally constant  constructible torsion \'etale sheaf $\shF$ on $X$, we have  $H_{\et}^1( \hat{X},  {\hat f}^{*}\shF ) = 0 $.

\end{lemma}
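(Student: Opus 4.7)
The plan is to pass to the cofiltered system defining $\hat{X}$, apply Lemma~\ref{h1}(a) at each finite level, and reduce the vanishing to an elementary statement about continuous cohomology of profinite groups. First, I would invoke the formula $H_{\et}^1(\hat{X}, \hat{f}^{*}\shF) \simeq \varinjlim_i H_{\et}^1(X_i, f_i^{*}\shF)$ recalled just before the lemma, where the system consists of connected finite \'etale Galois covers $f_i : X_i \to X$ (cofinal in all finite \'etale covers, as noted above). Since $f_i$ is finite \'etale, each pullback $f_i^{*}\shF$ is again locally constant constructible torsion on the connected noetherian scheme $X_i$, so Lemma~\ref{h1}(a) gives
\[
H_{\et}^1(X_i, f_i^{*}\shF) \simeq H^1\bigl(\pi_1^{\et}(X_i, \bar{x}_i),\ (f_i^{*}\shF)_{\bar{x}_i}\bigr).
\]
Choosing compatible geometric base points $\bar{x}_i$ above a fixed $\bar{x} \to X$, Galois correspondence identifies $\pi_1^{\et}(X_i, \bar{x}_i)$ with an open normal subgroup $N_i$ of $G := \pi_1^{\et}(X, \bar{x})$ and $(f_i^{*}\shF)_{\bar{x}_i}$ with the discrete $N_i$-module $M := \shF_{\bar{x}}$. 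The whole statement thus reduces to showing
\[
\varinjlim_i H^1(N_i, M) = 0,
\]
where $N_i$ ranges over the open normal subgroups of $G$, with $\bigcap_i N_i = \{1\}$.

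Second, since $M$ is finite and the $G$-action on it is continuous, there is an open normal subgroup $K \trianglelefteq G$ acting trivially on $M$. By cofinality I may restrict the colimit to those $N_i \subseteq K$, and for such $N_i$ the action on $M$ is trivial, so that $H^1(N_i, M) \simeq \Hom_{\mathrm{cont}}(N_i, M)$.

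Finally, given a continuous homomorphism $\phi : N_i \to M$, its kernel is an open subgroup of $N_i$ of finite index (since $M$ is finite), hence an open subgroup of $G$ of finite index. Intersecting its finitely many $G$-conjugates produces an open normal subgroup $N_j \trianglelefteq G$ contained in both $N_i$ and the kernel of $\phi$; restriction along $N_j \hookrightarrow N_i$ sends $\phi$ to $0$, so its class dies in the colimit, and the vanishing follows.

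The only step requiring care is the bookkeeping that matches the transition maps in $\varinjlim_i H_{\et}^1(X_i, f_i^{*}\shF)$ with the restriction maps in continuous group cohomology under the Galois-theoretic identifications above; once this is set up, the rest is formal. Conceptually, the content is that $\hat{f}^{*}\shF$ becomes a constant sheaf on $\hat{X}$, and $\hat{X}$ is ``simply connected'' in the appropriate pro-\'etale sense, forcing $H^1$ with finite constant coefficients to vanish.
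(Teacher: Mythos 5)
Your proof is correct and follows essentially the same route as the paper's: both pass to the colimit $\varinjlim_i H_{\et}^1(X_i, f_i^{*}\shF)$, apply Lemma~\ref{h1}(a) at each finite level, and kill each class by restricting to a deeper finite \'etale (Galois) cover. The only difference is presentational: you spell out the final profinite-cohomology vanishing (trivializing the action and killing continuous homomorphisms on their kernels), which the paper leaves implicit in the identification $H_{\et}^1 \simeq H_{\fet}^1$.
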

\proof 

We will prove that for every finite \'etale cover $f: Y \ra X$ and for every class $\beta \in  H_{\et}^1( Y,  f^*\mathcal{F})$ there exists a finite \'etale cover $h: Z \ra X$  such that  $h=fog$ for some finite \'etale cover $g: Z \ra Y$,  and  $(h_{|Y})^*(\beta):=g^{*}(\beta) = 0$ in $ H_{\et}^1( Z,  h^*\mathcal{F})$. Choose  a geometric point $\bar x \ra X$, and let $f: (Y,\bar y)  \ra (X, \bar x) $  be a pointed  finite \'etale cover. By  Lemma \ref{h1}.(a)   we have
$$H_{\et}^1(Y,  f^*\mathcal{F}) \simeq   H^1(\pi_1^\et(Y, \bar y),  (f^*\mathcal{F})_{\bar y}) \simeq  H_{\fet}^1(Y,  f^*\mathcal{F})$$ 
 Hence there exists a   finite \'etale cover $g: Z \ra Y$ such that $g^{*}(\beta) = 0$ in $ H_{\et}^2( Z,  g^{*} f^{*}\mathcal{F})$. Put $h = f \circ g: Z \ra X$,  it is a finite \'etale cover of $X$  with $(h_{|Y})^*(\beta):=g^{*}(\beta) = 0$.
\qed

\medskip

\begin{proposition}\label{probr}
Let $X$  be a regular connected  scheme of finite type over a field k of characteristic 0. Suppose that $H_{\et}^2( \hat{X},  {\hat f}^{*}\shF)= 0$ for every locally constant constructible torsion \'etale sheaf $ \mathcal{F}$ on $X$. Then $\Br(X)= \Br'(X)$.

\end{proposition}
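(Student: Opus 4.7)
The plan is to apply Lemma~\ref{galois} (which is available since $X$ is regular), combined with the remark following it: to conclude $\Br(X) = \Br'(X)$ it suffices to show that every torsion class in $H^2_\et(X, \GG_{m,X})$ is killed by a finite \'etale Galois cover, and moreover one may work with lifts to $H^2_\et(X, \mu_{n,X})$. So I would start by picking $\beta \in \Br'(X)$ with $n\beta = 0$ for some integer $n$, and use the Kummer exact sequence $1 \to \mu_{n,X} \to \GG_{m,X} \to \GG_{m,X} \to 1$ (exact on $X_\et$ because $\operatorname{char}(k) = 0$) to lift $\beta$ to a class $\alpha \in H^2_\et(X, \mu_{n,X})$.

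Now $\mu_{n,X}$ is a locally constant constructible torsion \'etale sheaf on $X$, so the hypothesis gives $H^2_\et(\hat X, \hat f^{*} \mu_{n,X}) = 0$. Combining this with the identification recalled at the start of Section~\ref{S8},
$$
0 = H^2_\et(\hat X, \hat f^{*}\mu_{n,X}) \;\simeq\; \varinjlim_i H^2_\et(X_i, f_i^{*}\mu_{n,X}),
$$
the image of $\alpha$ in this colimit vanishes, so there exists some index $i$ such that $f_i^{*}(\alpha) = 0$ in $H^2_\et(X_i, \mu_{n, X_i})$. Since every $X_i$ in the system is dominated by a Galois cover of $X$ (as explained at the beginning of Section~\ref{S8}), we may, after replacing $X_i$ by such a dominating Galois cover, assume that $f_i \colon X_i \to X$ is a finite \'etale Galois cover. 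Functoriality of the Kummer map in the sheaf then gives $f_i^{*}(\beta) = 0$ in $H^2_\et(X_i, \GG_{m, X_i})$.

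Having produced, for every $\beta \in \Br'(X)$, a Galois cover killing $\beta$, Lemma~\ref{galois} (together with the remark following it, which ensures it is enough to trivialize torsion classes) yields $\Br(X) = \Br'(X)$. There is essentially no serious obstacle: once the hypothesis is translated through the colimit formula for $H^{*}_\et(\hat X, \hat f^{*}\mathcal{F})$ and combined with the standard Kummer-lift trick, the statement follows formally. The only mildly subtle point worth verifying carefully is the passage from an arbitrary index $i$ to a Galois one, which is already handled by the remark that elements in the projective system of finite \'etale covers may be taken to be Galois.
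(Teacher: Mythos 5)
Your argument is correct, and it reaches the conclusion by a genuinely more direct route than the paper. The paper's proof passes the Hochschild--Serre spectral sequences $H^p(\Aut_X(X_i), H^q_\et(X_i, f_i^{*}\shF)) \Rightarrow H^{p+q}_\et(X,\shF)$ to the limit, invokes the auxiliary vanishing $H^1_\et(\hat X, \hat f^{*}\shF)=0$ (Lemma \ref{h1pro}) so that the five-term exact sequence collapses to an isomorphism $H^2_\et(X,\shF) \simeq H^2(\pi_1^\et(X,\bar x), H^0_\et(\hat X, \hat f^{*}\shF))$, and then reruns the continuous-cohomology and Galois-correspondence argument of Theorem \ref{rcnb} (which in turn requires observing that the hypothesis is inherited by finite \'etale covers of $X$) in order to produce the Galois cover killing a given class. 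You instead read the hypothesis directly through the colimit formula $H^2_\et(\hat X, \hat f^{*}\mu_{n,X}) \simeq \varinjlim_i H^2_\et(X_i, f_i^{*}\mu_{n,X})$: since an element of a filtered colimit of abelian groups vanishes exactly when it dies at some finite stage, the vanishing of the colimit already hands you an index $i$ with $f_i^{*}(\alpha)=0$, and the cofinality of Galois covers in the system (noted at the start of Section \ref{S8}) upgrades $X_i$ to a Galois cover, after which Lemma \ref{galois} applies. This shortcut dispenses with the spectral sequence, with Lemma \ref{h1pro}, and with the heredity check entirely; what the paper's longer route buys is the intermediate isomorphism $H^2_\et(X,\shF) \simeq H^2(\pi_1^\et(X,\bar x), H^0_\et(\hat X, \hat f^{*}\shF))$, which is reused in Proposition \ref{carpi} to prove that the pro-universal-cover condition is equivalent to $\pi_2^\et(X,\bar x)=0$, but which is not needed for the surjectivity statement itself. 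Both proofs rest on the same setup of $\hat X$ as a cofiltered limit and on the same final appeal to Lemma \ref{galois} via the Kummer lift, with the same implicit (and harmless) reduction to torsion classes recorded in the remark following that lemma.
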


\proof

Choose  a geometric point $\bar x \ra X$. For every \'etale Galois cover $f_i:  X_i \rightarrow X$, we consider the Hochschild-Serre spectral sequence 
$$
E^{p, q}_2(X_i)= H^p(\Aut_X( X_i),    H_{\et}^q(X_i,  f_{i}^{*}\shF )) \Rightarrow  H_{\et}^{p+q}(X, \mathcal{F})
$$
Taking the inductive limit  of $E^{p, q}_2(X_i)$ we get by \cite[Chapter I, \S 2.2,  Proposition 8]{SER} 
\begin{align}
\varinjlim H^p(\Aut_X( X_i),   H_{\et}^q(X_i,  f_{i}^{*}\shF)  ) & \simeq   H^p(\varprojlim \Aut_X( X_i),  \varinjlim H_{\et}^q(X_i,  f_{i}^{*}\shF)  ) \notag \\
& \simeq  H^p(\pi_1^\et(X, \bar{x}),   H_{\et}^q( \hat{X},   {\hat f}^{*}\shF) )\notag
\end{align}
Hence  we obtain  by \cite[Chapter III,  Remark 2.21.b]{MIL} a spectral sequence 
$$
E^{p, q}_2 =  H^p(\pi_1^\et(X, \bar{x}),   H_{\et}^q( \hat{X},  {\hat f}^{*}\shF))   \Rightarrow  H_{\et}^{p+q}(X, \mathcal{F})
$$
which yields  an exact sequence of law-degrees terms
\begin{align}
0 \ra  H^1(\pi_1^\et(X, \bar{x}),   H_{\et}^0( \hat{X},  {\hat f}^{*}\shF))    \ra   H_{\et}^1(X, \mathcal{F})  & \ra   H^0(\pi_1^\et(X, \bar{x}),   H_{\et}^1( \hat{X},  {\hat f}^{*}\shF))\\
 \ra  H^2(\pi_1^\et(X, \bar{x}),   H_{\et}^0( \hat{X},  {\hat f}^{*}\shF))   \ra  H_{\et}^2(X, \mathcal{F}) & \ra   H^0(\pi_1^\et(X, \bar{x}),   H_{\et}^2( \hat{X},  {\hat f}^{*}\shF )) \notag
\end{align}
By assumption $H_{\et}^2( \hat{X},  {\hat f}^{*}\shF )= 0$,  and by Lemma \ref{vanish1} $H_{\et}^1( \hat{X},  {\hat f}^{*}\shF) = 0$,  hence we get an isomorphism
$$
 H_{\et}^2(X, \mathcal{F})  \simeq H^2(\pi_1^\et(X, \bar{x}),   H_{\et}^0( \hat{X},  {\hat f}^{*}\shF ) ) 
$$
Now take $\mathcal{F} = \mu_{n,X}$ for some integer $n$,  and put $F:=  H_{\et}^0( \hat{X},  {\hat f}^{*}\shF )$. Then we have 
\begin{align}
H_{\et}^2(X,  \mu_{n,X}) & \simeq   H^2(\pi_1^\et(X, \bar{x}),  F) \notag  \\
& \simeq \varinjlim H^2(\pi_1^\et(X, \bar{x})/ N,  F^N) \notag 
\end{align}
 where the limit  runs over all open normal subgroups $N$ of $\pi_1^\et(X, \bar{x})$,  and $F^N$ is the submodule of $N$-invariant elements. Next, choose a class  $\beta \in  H_{\et}^2(X,  \mu_{n,X}) $. Note that if $\rho: Z \ra X$ is a finite \'etale cover,  it is easy to see  that $H_{\et}^2( \hat{Z},  {\hat h}^{*}(\rho^{*}\shF) )= 0$,
 where ${\hat h} :{\hat Z} \ra Z$ is the pro-universal cover of $Z$. Hence proceeding as in the proof of Theorem \ref{rcnb},  we can find an \'etale Galois cover $g: Y \ra X$ killing $\beta$ in $H_{\et}^2( Y, \mu_{n,Y})$. Thus by Lemma \ref{galois}  $\Br(X)= \Br'(X)$.
\qed

\medskip

We finish this section by a result concerning  smooth quasi-projective varieties, by which condition proposed above turns out to be equivalent to that in Theorem \ref{rcnb}, and hence equivalent  to the $K(\pi,1)$ propriety  for $H^2$ by Remark \ref{kpdep}. A key ingredient in the proof is the following interpretation of $\pi_2^\et(X, \bar{x})$ in terms of \'etale homology via the \'etale Hurewicz map.

Following P\'al \cite{PAL}, for every abelian group $A$ and $n \geq 0$ we  consider the homology groups
$$
H_n(X, A):= H_n( Et(X),  A)
$$
and the \'etale Hurewicz maps
$$
h_n(X, \bar{x}) :   \pi_n^\et(X, \bar{x})  \lra    H_n( Et(X), \hat{ \ZZ})
$$
where $\hat  \ZZ =  \varprojlim \ZZ /n\ZZ$ is the profinite completion of $\ZZ$. We have  the following interpretation of $\pi_2^\et(X, \bar{x}) $.

\begin{proposition}\cite[Theorem 4.3]{PAL}
\label{hrwtz}
  Let $(X,  \bar x)$  be a pointed  smooth,  geometrically irreducible,  quasi-projective variety  over an algebraically closed  field k,  then the \'etale Hurewicz map $h_2(X, \bar{x}) $ yields  an isomorphism 
$$
   \pi_2^\et(X, \bar{x})  \simeq  \varprojlim H_2( Et(X_i), \hat{ \ZZ})
$$
where the limit runs over all finite \'etale  covers  $(X_i \rightarrow X)$ of $X$.

\end{proposition}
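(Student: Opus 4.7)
The plan is to apply the classical Hurewicz theorem to the pro-universal cover of $Et(X)$ and then identify the resulting $H_2$ with the claimed inverse limit via a Pontryagin duality computation. The pro-object $\{Et(X_i)\}$ indexed by the cofiltered system of finite \'etale Galois covers is the natural candidate for a ``simply connected covering space'' of $Et(X)$.

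First I would verify that $\pi_2^{\et}$ is invariant along the tower: Lemma~\ref{homotop} gives $\pi_2^{\et}(X_i, \bar x) \simeq \pi_2^{\et}(X, \bar x)$ for each finite \'etale Galois cover $X_i \to X$, and passing to the inverse limit yields $\pi_2^{\et}(\hat X, \bar x) \simeq \pi_2^{\et}(X, \bar x)$, while by construction the pro-fundamental-group of $\{Et(X_i)\}$ is trivial. The smoothness and quasi-projectivity assumptions ensure that the relevant \'etale cohomology groups are finite (and hence $\pi_2^{\et}(X, \bar x)$ is profinite), so that the Artin--Mazur profinite Hurewicz theorem applies to the pro-simply-connected object $\{Et(X_i)\}$ and produces an isomorphism $h_2 \colon \pi_2^{\et}(\hat X, \bar x) \simeq H_2(Et(\hat X), \hat{\ZZ})$.

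Next I would compute the right-hand side as an inverse limit. The Artin--Mazur identification $H^q(Et(X), A) = H^q_{\et}(X, A)$, combined with continuity of \'etale cohomology for abelian pro-systems, gives $H^2(Et(\hat X), \ZZ/n\ZZ) = \varinjlim_i H^2_{\et}(X_i, \ZZ/n\ZZ)$ for each $n$. Pontryagin duality between $\hat{\ZZ}$ and $\QQ/\ZZ$, applied through the universal coefficient theorem, converts this direct limit of finite discrete $H^2$'s with $\QQ/\ZZ$-coefficients into the inverse limit $\varprojlim H_2(Et(X_i), \hat{\ZZ})$. Composing with the Hurewicz isomorphism above and the identification $\pi_2^{\et}(X, \bar x) \simeq \pi_2^{\et}(\hat X, \bar x)$ gives the desired bijection, which by naturality of Hurewicz is precisely the map induced by $h_2(X, \bar x)$.

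The main obstacle will be justifying the profinite Hurewicz theorem on the pro-object and handling the interchange of $H_2$ with inverse limits. Concretely, one needs a Mittag-Leffler condition for $\{H_2(Et(X_i), \hat{\ZZ})\}$ (sidestepped by the Pontryagin duality reduction to the much better behaved direct limit of finite $H^2$ groups) and one needs to know that the pro-system $\{Et(X_i)\}$ genuinely behaves like a simply connected CW complex for the purposes of the profinitely completed Hurewicz theorem. The quasi-projectivity of $X$ supplies the finiteness of cohomology groups that makes these completion arguments go through, and geometric irreducibility ensures that the system of finite \'etale covers is cofiltered so that the limit $\hat X$ makes sense and has well-behaved homotopy invariants.
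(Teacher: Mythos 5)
The paper does not actually prove this proposition: it is imported verbatim from \cite[Theorem 4.3]{PAL} and used as a black box, so there is no internal argument to compare yours against. Judging your proposal on its own merits: the overall strategy (pass to the system of finite \'etale covers, apply a Hurewicz theorem where the fundamental group has been killed, then identify the homology of that object with the stated inverse limit) is the natural one, but it contains a genuine gap at its central step.

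The gap is the assertion that $\{Et(X_i)\}$ is ``pro-simply-connected'' so that the Artin--Mazur profinite Hurewicz theorem applies. That theorem requires $\pi_1$ to vanish \emph{as a pro-group}, i.e.\ for every index $i$ there must exist $j$ such that $\pi_1^\et(X_j) \to \pi_1^\et(X_i)$ is the trivial homomorphism. Here the transition maps are inclusions of open subgroups $N_i \subset \pi_1^\et(X,\bar x)$, so triviality of the map forces $N_j = 1$, which never happens when $\pi_1^\et(X,\bar x)$ is infinite. What is true is only that $\varprojlim_i N_i = 1$, and that is strictly weaker: at every finite stage the Cartan--Leray (Hochschild--Serre) spectral sequence for $N_i \subset \pi_1^\et(X,\bar x)$ inserts the group-homology terms $H_2(N_i,\hat{\ZZ})$, $H_3(N_i,\hat{\ZZ})$ and replaces $\pi_2^\et(X,\bar x)$ by its coinvariants $\pi_2^\et(X,\bar x)_{N_i}$ in the comparison with $H_2(Et(X_i),\hat{\ZZ})$. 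Your argument silently assumes these error terms are absent; they are not, and killing them is the actual content of the proposition. The repair is to run the spectral sequence for each $i$ and then pass to the limit, using continuity of continuous cohomology of profinite groups, $\varinjlim_i H^q(N_i, M) = H^q(\bigcap_i N_i, M) = 0$ for $q \ge 1$ and $M$ finite discrete (dually $\varprojlim_i H_q(N_i,\hat{\ZZ}) = 0$), together with $\varprojlim_i \pi_2^\et(X,\bar x)_{N_i} = \pi_2^\et(X,\bar x)$ by continuity of the action. Your Pontryagin-duality device handles the interchange of homology with inverse limits, but not these terms. A secondary, repairable point: identifying $Et(\hat X)$ with the pro-object $\{Et(X_i)\}$ is itself a nontrivial continuity statement for the \'etale homotopy type; you can sidestep it entirely by never introducing the scheme $\hat X$ and working with the pro-object throughout.
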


\begin{proposition}\label{carpi}
 Let  $(X, \bar x)$  be a pointed smooth,  geometrically irreducible,  quasi-projective variety over an algebraically closed field  k. Then  the following   conditions are equivalent 
\begin{itemize}

\item [(i)]  $\pi_2^\et(X, \bar x) = 0$.
\item [(ii)]   $H_{\et}^2( \hat{X},  {\hat f}^{*}\shF )= 0$ for every  locally constant constructible torsion \'etale 
 sheaf $\mathcal{F}$ on $X$.
\item [(iii)]   $H_{\et}^2(X,  \mathcal{F}) \simeq   H^2(\pi_1^\et(X, \bar x),  \mathcal{F}_{\bar x})$ for every  locally constant constructible torsion \'etale 
 sheaf $\mathcal{F}$ on $X$.
\end{itemize}

\end{proposition}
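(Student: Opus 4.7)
The equivalence (i)$\Leftrightarrow$(iii) is immediate from Remark \ref{kpdep}: a smooth variety is normal, hence geometrically unibranch, and the remark applied in degree $q=2$ identifies the vanishing of $\pi_2^\et(X,\bar x)$ with the $K(\pi,1)$-type isomorphism of (iii). The genuine content is therefore (ii)$\Leftrightarrow$(iii), which I would prove by two separate Hochschild--Serre arguments for the pro-universal cover $\hat f:\hat X\to X$.

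For (ii)$\Rightarrow$(iii) I would rerun the argument of Proposition \ref{probr} verbatim. Any locally constant constructible torsion sheaf $\shF$ becomes trivial on some finite \'etale cover $X_{i_0}\to X$, so $\hat f^*\shF$ is the constant sheaf on $\hat X$ with value $\shF_{\bar x}$. The spectral sequence
$$E_2^{p,q}=H^p(\pi_1^\et(X,\bar x), H_{\et}^q(\hat X,\hat f^*\shF))\Rightarrow H_{\et}^{p+q}(X,\shF)$$
has $E_2^{p,0}=H^p(\pi_1^\et(X,\bar x),\shF_{\bar x})$, while Lemma \ref{h1pro} kills $E_2^{p,1}$ and hypothesis (ii) kills $E_2^{p,2}$. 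The five-term exact sequence then collapses to the desired isomorphism.

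For (iii)$\Rightarrow$(ii), first observe that (iii) is inherited by every finite \'etale cover $X_i\to X$: by Lemma \ref{homotop} $\pi_2^\et$ is unchanged, and Remark \ref{kpdep} then propagates (iii) to $X_i$ (each $X_i$ is again smooth, hence geometrically unibranch). Fix $\shF$ locally constant constructible torsion, set $M=\shF_{\bar x}$, and choose $i_0$ so that $f_i^*\shF\simeq M$ is constant for every $i\ge i_0$. Since $H_{\et}^2(\hat X,\hat f^*\shF)=\varinjlim_{i\ge i_0} H_{\et}^2(X_i,M)$, it suffices to show the pullback $H_{\et}^2(X_i,M)\to H_{\et}^2(\hat X,M)$ is zero for each such $i$. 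Run Hochschild--Serre for the pro-Galois cover $\hat X\to X_i$ with pro-Galois group $\pi_1^\et(X_i,\bar x)$; Lemma \ref{h1pro} kills the row $q=1$, so the filtration of $H_{\et}^2(X_i,M)$ has only two nontrivial graded pieces: a sub $E_\infty^{2,0}=H^2(\pi_1^\et(X_i,\bar x),M)$ embedded via the inflation edge map, and a quotient $E_\infty^{0,2}$ injecting into $H_{\et}^2(\hat X,M)^{\pi_1^\et(X_i,\bar x)}$ via the other edge, which is precisely the geometric pullback followed by invariants. Under (iii) applied to $X_i$, the inflation embedding is already an isomorphism, forcing $E_\infty^{0,2}=0$ and hence the pullback to be zero.

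The main obstacle is the bookkeeping of the Hochschild--Serre filtration of $H_{\et}^2(X_i,M)$: one must verify that $E_\infty^{1,1}=0$ (granted by Lemma \ref{h1pro}), that $E_\infty^{2,0}=E_2^{2,0}$ (no nonzero differentials reach this term since $E_2^{\ast,1}=0$), and that the edge map $H_{\et}^2(X_i,M)\to H_{\et}^2(\hat X,M)^{\pi_1^\et(X_i,\bar x)}$ coincides with geometric pullback followed by taking invariants. Once these identifications are in place the argument is essentially formal. A pleasant byproduct is that this route sidesteps Proposition \ref{hrwtz} entirely in the (iii)$\Rightarrow$(ii) direction; nevertheless, the Hurewicz identification $\pi_2^\et(X,\bar x)\simeq\varprojlim H_2(Et(X_i),\hat\ZZ)$ remains the conceptual reason for packaging the equivalence in this form, and explains the quasi-projectivity hypothesis.
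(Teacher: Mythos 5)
Your proof is correct, but it closes the cycle of implications along a genuinely different leg than the paper does, and the difference is worth spelling out. The paper likewise handles (i)$\Leftrightarrow$(iii) through Remark \ref{kpdep}, and its (iii)$\Rightarrow$(ii) shows that every class in $H_{\et}^2(X_i, f_i^{*}\shF)$ dies on a further finite \'etale cover because continuous cohomology of the profinite group $\pi_1^\et(X_i,\bar x)$ is a colimit over finite quotients; your edge-map analysis of the Hochschild--Serre spectral sequence for $\hat X \ra X_i$ (inflation surjective forces $E_\infty^{0,2}=0$, hence the pullback to $\hat X$ vanishes) reaches the same conclusion by a somewhat heavier but equally valid mechanism, and your care in checking that (iii) descends to each $X_i$ via Lemma \ref{homotop} and Remark \ref{kpdep} is in fact more precise than the paper's citation of Lemma \ref{h1} at that point. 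The real divergence is the remaining implication: the paper proves (ii)$\Rightarrow$(i) directly, taking coefficients $\hat\ZZ$, deducing $H_2(\hat X,\hat\ZZ)=0$ from the two universal coefficient theorems together with Lemma \ref{h1pro}, and then invoking P\'al's \'etale Hurewicz isomorphism $\pi_2^\et(X,\bar x)\simeq\varprojlim H_2(Et(X_i),\hat\ZZ)$ (Proposition \ref{hrwtz}) and the homological Yoneda lemma; you instead prove (ii)$\Rightarrow$(iii) by rerunning the five-term sequence of Proposition \ref{probr}. Your route buys economy: it never touches Proposition \ref{hrwtz}, so smoothness and quasi-projectivity enter only through geometric unibranchness in the (i)$\Leftrightarrow$(iii) step, and your argument shows that (ii)$\Leftrightarrow$(iii) holds for any connected noetherian scheme. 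What the paper's route buys is the explicit homotopy-theoretic content --- the identification of $\pi_2^\et$ with an inverse limit of second homology groups of the covers --- which, as you observe, is the conceptual reason the statement is packaged with condition (ii) at all. Both arguments are complete.
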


\proof 
(i)$\Leftrightarrow$(iii): This holds by Remark \ref{kpdep}. Alternatively, the implication (i)$\Rightarrow$(iii) can be deduced directly from the exact sequence 
$$
0\lra  H^2(\pi_1^\et(X, \bar{x}),   \mathcal{F}_{\bar x})    \lra  H_{\et}^2(X, \mathcal{F})   \lra \Hom( \pi_2^\et(X, \bar{x}),  \mathcal{F}_{\bar x})^{\pi_1^\et(X, \bar{x})} 
$$
(iii)$\Rightarrow$(ii): we have  to prove that for every finite \'etale cover $f: Y \ra X$ and for every class $\beta \in  H_{\et}^2( Y,  f^*\mathcal{F})$ there exists a finite \'etale cover $h: Z \ra X$  factors through $f$, i.e. $h=fog$ for some finite \'etale cover $g: Z \ra Y$,  such that $(h_{|Y})^*(\beta):=g^{*}(\beta) = 0$ in $ H_{\et}^2( Z,  h^*\mathcal{F})$. Let $f: (Y,\bar y)  \ra (X, \bar x) $  be a pointed  finite \'etale cover. We have  by Lemma \ref{h1}.(b)   $H_{\et}^2(Y,  f^*\mathcal{F}) \simeq   H^2(\pi_1^\et(Y, \bar y),  (f^*\mathcal{F})_{\bar y})$, hence for  a class  $\beta \in  H_{\et}^2( Y,  f^*\mathcal{F})$  Proposition \ref{kpcover} implies that   there exists a   finite \'etale cover $g: Z \ra Y$ such that $g^{*}(\beta) = 0$ in $ H_{\et}^2( Z,  g^{*} f^{*}\mathcal{F})$. Put $h = f \circ g: Z \ra X$,  it is a finite \'etale cover of $X$  with  $ h^{*}= g^{*}f^{*} $ and $(h_{|Y})^*(\beta):=g^{*}(\beta) = 0$.

(ii)$\Rightarrow$(i).   Let $ \mathcal F = \hat \ZZ$   we have 
\begin{equation}
H_{\et}^2( \hat{X},  {\hat f}^{*}\shF ) = H_{\et}^2( \hat{X}, \hat{ \ZZ}) = H^2( Et(\hat{X}), \hat{ \ZZ})
\end{equation}
For all $n \geq 1$,we have by  the universal coefficient theorem for cohomology  an exact sequence
$$
0  \lra  \Ext^1_\ZZ( H_{n-1}(\hat{X},  \ZZ),  \hat{\ZZ})  \lra  H_{\et}^n( \hat{X}, \hat{ \ZZ})  \lra  \Hom_\ZZ(H_n(\hat{X},  \ZZ), \hat{ \ZZ})  \lra 0
$$
Thus for $n=1,2$ it  follows from assumption and Lemma \ref{h1pro} that $H_1(\hat{X},  \ZZ)=0$ and $H_2(\hat{X},  \ZZ)=0$. Now  the universal coefficient theorem for homology yields the following exact sequence
$$
0  \lra   H_2(\hat{X},  \ZZ) \otimes \hat \ZZ  \lra  H_2( \hat{X}, \hat{ \ZZ})  \lra  \Tor_1(H_1(\hat{X},  \ZZ), \hat{ \ZZ})  \lra 0
$$
Thus  $H_2( \hat{X}, \hat{ \ZZ})=0$. Since $\varinjlim H_{\et}^2( X_i,  \hat{ \ZZ}) \simeq  H_{\et}^2(\hat{X},  \hat{ \ZZ})$, it follows from  Proposition \ref{hrwtz} and homological Yoneda lemma \cite[Lemma 1.1]{PVLT} that
$$
\pi_2^\et(X,\bar x ) \simeq \varprojlim H_2( X_i, \hat{ \ZZ}) \simeq H_2( \hat{X}, \hat{ \ZZ})=0 
$$
\qed

\begin{acknowledgments}I would like to thank my advisor  Rachid Chibloun for his continuous encouragement  throughout the stages of this work. This paper is dedicated to Professor Raymond Hoobler, who sadly passed away from complications of COVID-19.  
\end{acknowledgments}


\begin{thebibliography}{ccccc}

\bibitem{ACH1}
P.\ Achinger:
$K(\pi, 1)$-neighborhoods and comparison theorems.
Compos. \ Math.\ 151  (2015),  1945--1964.

\bibitem{ACH2}
P.\ Achinger:
 Wild ramification and $K(\pi, 1)$  spaces.
Invent. \ Math.\ 210  (2017),  453--499.

\bibitem{ANT}
B.\ Antieau,  D.\ Gepner:
Brauer groups and \'etale cohomology in derived algebraic geometry.
Geom. \ Topol. \ 18 (2014),  1149--1244.

\bibitem{ARTN}
M.\ Artin:
On the joins of Hensel rings.
Adv.\ Math.\ 7 (1971),  282--296.

\bibitem{AM}
M.\ Artin,  B.\ Mazur:
\'Etale homotopy.
 Lect.\ Notes Math.\ 100.
Springer,  Berlin,  1969.

\bibitem{BERK}
V.G. \ Berkovich:
The Brauer group of abelian varieties.
Funkcional Anal.\ i Prilo\v zen.\  6  (1972),   10--15.

\bibitem{BERT}
C.\  Bertolin,  F.\ Galluzzi:
Gerbes and Brauer groups over stacks.
arXiv preprint arXiv:1705.0138  (2017).

\bibitem{BERTU}
I. \ Bertuccioni:
Brauer groups and cohomology.
Arch. \ Math.\ 84  (2005),  406--411.

\bibitem{CES}
K.\ \v Cesnavi\v cius:
Purity for the Brauer group.
Duke Math. \ J.\  168 (2019),  1461--148.

\bibitem{CHO}
C.\  Y.\ Chough:
Brauer Spaces of Spectral Algebraic Stacks.
arXiv  preprint arXiv:2002.07946  (2020).

\bibitem{DJNG}
A.\ J.\ de Jong:
A result of Gabber.
Preprint. {\tt http://www.math.columbia.edu/$\sim$dejong/} 

\bibitem{EHKV}
D.~Edidin,  B.~Hassett,  A.~Kresch,  A.~Vistoli:
Brauer groups and quotient stacks.
Amer.\ J.\ Math.\ 123 (2001),  761--777.

\bibitem{EN}
G.\ Elencwajg,  S.\ Narasimhan:
Projective bundles on a complex torus.
J.\ Reine Angew.\ Math.\ 340 (1983),  1--5.

\bibitem{ES}
H.\ Esnault:
Survey on some aspects of Lefschetz theorems in algebraic geometry.
 Rev.\ Math.\ Complut.\ 30  (2017),  217--232.

\bibitem{EMF}
 E.\ Friedlander:
Etale homotopy of simplical schemes.
Princeton University Press, Vol. 104, 1982.


\bibitem{GBBR}
O.\ Gabber:
Some theorems on Azumaya algebras.
In: M.~Kervaire,  M.~Ojanguren (eds.),  Groupe de Brauer,  pp. 129--209, 
 Lect.\ Notes Math.\ 844.
Springer,  Berlin,  1981.

\bibitem{GR1}
A.\ Grothendieck:
Le groupe de Brauer I. Alg\`ebre d'Azumaya et interpretations diverses,  II. Th\'eories cohomologiques, III. Exemples et complements.
In: J.~Giraud (ed.) et al.: Dix expos\'es sur la cohomologie des
sch\'emas,  pp.\ 46--66.
North-Holland,  Amsterdam,  1968.

\bibitem{SGA1}
A.\ Grothendieck:
Revet\^ements \'etales et groupe fondamental (SGA 1), 
 Lect.\ Notes Math.\,  244,  Springer-Verlag,  1971.

\bibitem{SGA4}
A.\ Grothendieck,  M. \ Artin and J.-L. \ Verdier: 
Th\'eorie des topos et cohomologie \'etale des sch\'emas (SGA 4)
 Lect.\ Notes Math.\,  269,  270 and
305,  Springer-Verlag,  1972--1973.

\bibitem{EGA}
A.\ Grothendieck,  J.\ Dieudonn\'e: 
\'El\'ements de g\'eom\'etrie alg\'ebrique: IV. \'Etude locale des sch\'emas et des morphismes de sch\'emas 
Publ.\ Math.\ IHES.\ 32 (1967),  5--361.

\bibitem{SGA2}
A.\ Grothendieck,  M.\  Raynaud:
Cohomology locale des faiseaux coh\'erents et th\'eor\`emes de Lefschetz locaux et globaux (SGA 2), 
Adv.\ Stud.\ Pure Math.\,  Vol. 2,  North-Holland,  Amsterdam,  1968.

\bibitem{GJZ}
  F.\ Grunewald, A.\ Jaikin-Zapirain,  P.\ A.\ Zalesskii:
Cohomological goodness and the profinite completion of Bianchi groups.
Duke Math. \ J.\  144 (2008), 53--72.   

\bibitem{HOO1}
R.\ Hoobler:
Brauer groups of abelian schemes.
Ann.\ Sci.\ \'Ecole Norm.\ Sup.\  5  (1972),  45--70.

\bibitem{HOO2}
R.\ Hoobler:
When is Br(X) = Br'(X) ?
In: M.~Kervaire,  M.~Ojanguren (eds.),  Groupe de Brauer,  pp. 231--244, 
 Lect.\ Notes Math.\ 844.
Springer,  Berlin,  1981.

\bibitem{HS}
D.\ Huybrechts,  S.\ Schr\"oer:
The Brauer group for analytic K3-surfaces.
Int.\ Math.\ Res.\ Not.\ 2003.50  (2003),  2687--2698.

\bibitem{IVR}
B.\ Iversen:
Brauer group of a linear algebraic group.
J.\ Algebra  42  (1976),  295--301.

\bibitem{LIE}
M.\ Lieblich: 
Twisted sheaves and the period-index problem.
Compos. \  Math.\ 144 (2008),  1--31.

\bibitem{KL}
K.\ Lorensen:
Groups with the same cohomology as their profinite completions.
J.\ Algebra 320 (2008),  1704--1722.

\bibitem{MTH2}
S\ Mathur:
Experiments on the Brauer map in High Codimension.
arXiv preprint arXiv:2002.12205  (2020).

\bibitem{MTH1}
S.\ Mathur:
The Resolution Property via Azumaya Algebras.
arXiv preprint arXiv:1711.04871   (2017).

\bibitem{MIL}
J.\ Milne: \'Etale cohomology. 
Princeton Mathematical Series,  33.
Princeton University Press,  Princeton,  1980. 

\bibitem{OLSS}
M.\ Olsson:
Algebraic Spaces and Stacks.
A.M.S. Colloquium Publications 62,  A.M.S.,  Providence,  RI,  2016.

\bibitem{PAL}
A.\ P\'al:
\'Etale homotopy equivalence of rational points on algebraic varieties.
Algebra Number theory  9 (2015),  815--873.

\bibitem{PVLT}
G.\ Peschke, T.\ Van der Linden:
The Yoneda isomorphism commutes with homology.
J.\ Pure Appl.\  Algebra 220 (2016),  495--517. 

\bibitem{SCHM}
A.\ Schmidt,  J.\ Stix:
Anabelian geometry with \'etale homotopy type.
Ann.\ of Math.\ 184,  (2016),  817--868.

\bibitem{SCH1}
S.\ Schr\"oer:
There are enough Azumaya algebras on surfaces.
Math.\ Ann.\ 321 (2001),  439--454.

\bibitem{SCH2}
S.\ Schr\"oer:
The bigger Brauer group is really big.
J.\ Algebra 262 (2003),  210--225.

\bibitem{SCH3}
S.\ Schr\"oer:
Topological methods for complex-analytic Brauer groups.
Topology 44  (2005),  875--894.

\bibitem{SER}
J.-P.\ Serre:
Cohomologie galoisienne.
Fifth edition. Lect.\ Notes Math.\ 5.
Springer,  Berlin,  1994.

\bibitem{DS}
D.\ Sullivan:
Genetics of homotopy theory and the Adams conjecture.
Ann.\ of Math.\ 100,  (1974),  1--79.

\bibitem{TOE}
B.\ Toen:
Derived Azumaya algebras and generators for twisted derived categories.
Invent. \ Math.\ 189 (2012),  581--652. 

\end{thebibliography}
\end{document}